\documentclass[a4paper,11pt]{article}
\usepackage[utf8]{inputenc}
\usepackage{multirow}%
\usepackage{fullpage}
\usepackage{amsmath,amsthm,amsfonts,amssymb}%
\usepackage[dvips]{graphicx}
\usepackage{enumerate}
\usepackage{dsfont}
\usepackage{array}
\usepackage{bm}
\usepackage{natbib}
\usepackage{color}
\usepackage[titletoc,title]{appendix}

\usepackage{paralist}

\numberwithin{equation}{section}

\usepackage{titletoc}
\usepackage[runin]{abstract}
\usepackage{calc}

\newtheorem{theorem}{Theorem}[section]
\newtheorem{fact}{Fact}

\newtheorem{proposition}[theorem]{Proposition}
\newtheorem{lemma}[theorem]{Lemma}

\theoremstyle{remark}

\newtheorem{example}[theorem]{Example}

\newcount\LongProof
\LongProof=1

\newcommand{\longproof}[2]{
  \ifnum\LongProof=0{#1}\fi
  \ifnum\LongProof=1{#2}\fi
}

\newcommand\R{\mathbb{R}}
\newcommand{\reals}{\R}

\newcommand\eps{\varepsilon}
\newcommand\prob{\mathbb{P}}    
\newcommand\ind{\mathds{1}}     
\newcommand{\1}{\ind}

\newcommand{\inv}{^{-}}

\newcommand{\point}{\,\cdot\,}

\newcommand{\abs}[1]{\left\lvert{#1}\right\rvert}
\newcommand{\dto}{\rightsquigarrow}

\DeclareMathOperator{\Var}{Var}

\newcommand{\argmin}{\operatornamewithlimits{\arg\min}}

\newcommand{\diff}{\mathrm{d}}


\renewcommand{\abs}[1]{\left\lvert{#1}\right\rvert}


\newcommand{\bu}{\bm{u}}
\newcommand{\bv}{\bm{v}}
\newcommand{\bw}{\bm{w}}

\newcommand{\bY}{\bm{Y}}
\newcommand{\by}{\bm{y}}

\allowdisplaybreaks[1]

\definecolor{auburn}{rgb}{0.43, 0.21, 0.1}
\definecolor{britishracinggreen}{rgb}{0.0, 0.26, 0.15}
\definecolor{burntumber}{rgb}{0.54, 0.2, 0.14}
\definecolor{carmine}{rgb}{0.59, 0.0, 0.09}

\newcommand{\expec}{\operatorname{\mathbb{E}}}

\begin{document}

\title{On the weak convergence of the empirical conditional copula under a simplifying assumption}

\author{Fran\c{c}ois Portier\footnote{LTCI, CNRS, T\'el\'ecom ParisTech, Universit\'e Paris-Saclay. {E-mail:} francois.portier@gmail.com} \and Johan Segers\footnote{Universit\'{e} catholique de Louvain,
	Institut de Statistique, Biostatistique et Sciences Actuarielles,
	Voie du Roman Pays~20,
	B-1348 Louvain-la-Neuve, Belgium. {E-mail:} johan.segers@uclouvain.be}
}
\maketitle

\renewcommand{\abstractname}{Abstract.}
\begin{abstract}
A common assumption in pair-copula constructions is that the copula of the conditional distribution of two random variables given a covariate does not depend on the value of that covariate. Two conflicting intuitions arise about the best possible rate of convergence attainable by nonparametric estimators of that copula. On the one hand, the best possible rates for estimating the marginal conditional distribution functions is slower than the parametric one. On the other hand, the invariance of the conditional copula given the value of the covariate suggests the possibility of parametric convergence rates. The more optimistic intuition is shown to be correct, confirming a conjecture supported by extensive Monte Carlo simulations by I. Hobaek Haff and J. Segers [Computational Statistics and Data Analysis 84:1--13, 2015] and improving upon the nonparametric rate obtained theoretically by I. Gijbels, M. Omelka and N. Veraverbeke [Scandinavian Journal of Statistics 42:1109--1126, 2015]. The novelty of the proposed approach lies in a double smoothing procedure for the estimator of the marginal conditional distribution functions. 
The copula estimator itself is asymptotically equivalent to an oracle empirical copula, as if the marginal conditional distribution functions were known.

\noindent \textbf{Keywords:} Donsker class; Empirical copula process; Local linear estimator; Pair-copula construction; Partial copula; Smoothing; Weak convergence.
\end{abstract}

\section{Introduction}

Let $(Y_1, Y_2)$ be a pair of continuous random variables with joint distribution function $H(y_1, y_2) = \Pr(Y_1 \le y_1, Y_2 \le y_2)$ and marginal distribution functions $F_j(y_j) = \Pr(Y_j \le y_j)$, for $y_j \in \reals$ and $j \in \{1,2\}$. If $H$ is continuous, then $(F_1(Y_1), F_2(Y_2))$ is a pair of uniform $(0, 1)$ random variables. Their joint distribution function, $D(u_1, u_2) = \Pr\{ F_1(Y_1) \le u_1, F_2(Y_2) \le u_2 \}$ for $u_j \in [0, 1]$, is therefore a \emph{copula}. By Sklar's celebrated theorem \citep{sklar:1959}, we have $H(y_1, y_2) = D \{ F_1(y_1), F_2(y_2) \}$. 
The copula $D$ thus captures the dependence between the random variables $Y_1$ and $Y_2$.

Suppose there is a third random variable, $X$, and suppose the joint conditional distribution of $(Y_1, Y_2)$ given $X = x$, with $x \in \reals$, is continuous. Then we can apply Sklar's theorem to the joint conditional distribution function $H(y_1,y_2|x) = \Pr(Y_1 \le y_1, Y_2 \le y_2 \mid X = x)$. Writing $F_j(y_j|x) = \Pr (Y_j \le y_j \mid X = x)$, we have $H(y_1,y_2|x) = C \{ F_1(y_1|x), F_2(y_2|x) \mid x \}$, where $C(u_1, u_2|x) = \Pr\{ F_1(Y_1|x) \le u_1, \, F_2(Y_2|x) \le u_2 \mid X = x \}$ is the copula of the conditional distribution of $(Y_1, Y_2)$ given $X = x$. This conditional copula thus captures the dependence between $Y_1$ and $Y_2$ conditionally on $X = x$. Examples include exchange rates before and after the introduction of the euro \citep{patton:2006}, diastolic versus systolic blood pressure when controlling for cholesterol \citep{lambert:2007}, and life expectancies of males versus females for different categories of countries \citep{veraverbeke+o+g:2011}.

Evidently, we can integrate out the joint and marginal conditional distributions to obtain their unconditional versions: if $X$ has density $f_X$, then $H(y_1,y_2) = \int H(y_1,y_2|x) \, f_X(x) \, \diff x$ and similarly for $F_j(y_j)$. For the copula, however, this relation does not hold: in general, $D(u_1, u_2)$ will be different from $\int C(u_1, u_2|x) \, f_X(x) \, \diff x$.

\begin{example}
Suppose that $Y_1$ and $Y_2$ are conditionally independent given $X$: for all $x$, we have $H(y_1,y_2|x) = F_1(y_1|x) \, F_2(y_2|x)$. Then the copula of the conditional distribution is the independence copula: $C(u_1, u_2|x) = u_1 u_2$. Nevertheless, unconditionally, $Y_1$ and $Y_2$ need not be independent, and the copula, $D$, of the unconditional distribution of $(Y_1, Y_2)$ can be different from the independence copula.
\end{example}

In the previous example, the copula of the conditional distribution of $(Y_1, Y_2)$ given $X = x$ does not depend on the value of $x$. This invariance property is called the \emph{simplifying assumption}. The property does not hold in general but it is satisfied, for instance, for trivariate Gaussian distributions. For further examples and counterexamples of distributions which do or do not satisfy the simplifying assumption, see for instance~\cite{hobaekhaff+a+f:2010} and \cite{stober+j+c:2013}.

\begin{example}
\label{ex:Gaussian}
Let $(Y_1, Y_2, X)$ be trivariate Gaussian with means $\mu_1, \mu_2, \mu_X$, standard deviations $\sigma_1, \sigma_2, \sigma_X > 0$, and correlations $\rho_{12}, \rho_{1X}, \rho_{2X} \in (-1, 1)$. The conditional distribution of $(Y_1, Y_2)$ given $X = x$ is bivariate Gaussian with means $\mu_j - \rho_{jX} x$ and standard deviations $\sigma_j (1 - \rho_{jX}^2)^{1/2}$ for $j \in \{1, 2\}$, while the correlation is given by the partial correlation of $(Y_1, Y_2)$ given $X$, i.e.,
$
	\rho_{12|X}
	=
	(\rho_{12} - \rho_{1X}\rho_{2X}) /
	\{ (1 - \rho_{1X}^2)(1 - \rho_{2X}^2) \}^{1/2}$.
%
As a consequence, the conditional copula of $(Y_1, Y_2)$ given $X = x$ is the so-called Gaussian copula with correlation parameter $\rho_{12|X}$, whatever the value of $x$, while the unconditional copula of $(Y_1, Y_2)$ is the Gaussian copula with correlation parameter $\rho_{12}$.
\end{example}

Given an iid sample $(X_i, Y_{i1}, Y_{i2})$, $i = 1, \ldots, n$, we seek to make nonparametric inference on the conditional dependence of $(Y_1, Y_2)$ given $X$ under the simplifying assumption that there exists a single copula, $C$, such that $C(u_1, u_2|x) = C(u_1, u_2)$ for all $x$ and all $(u_1, u_2)$. As noted in the two examples above, $C$ is usually not equal to the copula, $D$, of the unconditional distribution of $(Y_1, Y_2)$. The inference problem arises in so-called pair-copula constructions, where a multivariate copula is broken down into multiple bivariate copulas through iterative conditioning \citep{joe:1996,bedford+c:2002,aas+c+f+b:2009}. If the pair copulas are assumed to belong to parametric copula families, then likelihood-based inference yields $1/\sqrt{n}$-consistent estimators of the copula parameters \citep{hobaekhaff:2013}. Here, we concentrate instead on the nonparametric case. The mathematical analysis is difficult and our treatment is therefore limited to a single conditioning variable.

Suppose we have no further structural information on the joint distribution of $(X, Y_1, Y_2)$ besides smoothness and the simplifying assumption. Then how well can we hope to estimate the conditional copula $C$? The simplifying assumption implies that $C$ is an object that combines both local and global properties of the joint distribution of $(X, Y_1, Y_2)$: conditioning upon $X = x$ and integrating out, we find that $C$ is equal to the partial copula of $(Y_1, Y_2)$ given $X$ \citep{bergsma:2011,gijbels+o+v:2015:ejs},
\[
  C(u_1, u_2) = \Pr\{ F_1(Y_1|X) \le u_1, F_2(Y_2|X) \le u_2 \}.
\]
On the one hand, the necessity to estimate the univariate conditional distribution functions $F_{j}(y_j|x)$ suggests that the best convergence rate that can be achieved for estimating $C$ will be slower than $1/\sqrt{n}$. On the other hand, since the outer probability is an unconditional one, we may hope to achieve the parametric rate, $ 1/\sqrt{n}$.

In \cite{hobaek+s:2015}, evidence from extensive numerical experiments was provided to support the more optimistic conjecture that the parametric rate $ 1/\sqrt{n}$ can be achieved. The estimator proposed was constructed as the empirical copula \citep{deheuvels:1979,ruschendorf:1976} based on the pairs 
\begin{equation}
\label{eq:pseudoobs}
  \bigl( \hat{F}_{n,1}(Y_{i1}|X_i), \hat{F}_{n,2}(Y_{i2}|X_i) \bigr), 
  \qquad i = 1, \ldots, n, 
\end{equation}
where the $\hat{F}_{n,j}$ were nearest-neighbour estimators of the $F_j$. In \cite{omelka+g+v:15}, a high-level theorem was provided saying that if $\hat{F}_{n,j}$ are estimators of the univariate conditional margins that satisfy a number of assumptions, then the empirical copula based on the pairs \eqref{eq:pseudoobs} is consistent and asymptotically normal with convergence rate $1/\sqrt{n}$. In addition, it was proposed to estimate the univariate conditional margins $F_j$ by local linear estimators. However, attempts to prove that any of these estimators of $F_j$ also satisfy the conditions of the cited theorem have failed so far. The only positive results have been obtained under additional structural assumptions on the conditional margins, exploitation of which leads to estimators $\hat{F}_{n,j}$ with the desired properties.

The contribution of our paper is then two-fold:
\begin{enumerate}
\item
We provide an alternative to Theorem~2 in \cite{omelka+g+v:15}, imposing weaker conditions on the estimators $\hat{F}_{n,j}$ of the univariate conditional margins, and concluding that the empirical copula based on the pairs \eqref{eq:pseudoobs} is consistent and asymptotically normal with rate $ 1/\sqrt{n}$. The conclusion of our main theorem is a bit weaker than the one of the cited theorem in that we can only prove weak convergence of stochastic processes on $[\gamma, 1-\gamma]^2$, for arbitrary $0 < \gamma < 1/2$, rather than on $[0, 1]^2$.
\item
We provide nonparametric estimators $\hat{F}_{n,j}$ of the conditional margins $F_j$ that actually satisfy the conditions of our theorem. The estimators are smoothed local linear estimators. In contrast to \cite{omelka+g+v:15}, we also smooth in the $y_j$-direction, ensuring that the trajectories $(y_j, x) \mapsto \hat{F}_{n,j}(y_j|x)$ belong to a Donsker class with high probability. 
\end{enumerate}
To prove that the smoothed local linear estimators $\hat{F}_{n,j}$ satisfy the requirements of our main theorem, we prove a number of asymptotic results on those estimators that may be interesting in their own right.

Although our estimator of $F_j$ and thus of $C$ is different from the ones of \cite{hobaek+s:2015} or \cite{omelka+g+v:15}, we do not claim it to be superior. Our objective is rather to provide the first construction of a nonparametric estimator of $C$ that can be proven to achieve the $ 1/\sqrt{n}$ convergence rate. This is why, in the numerical experiments, we limit ourselves to illustrate the asymptotic theory but do not provide comparisons between estimators. For the same reason, we do not address other important questions of practical interest such as how to choose the bandwidths or how to test the hypothesis that the simplifying assumption holds. For the latter question, we refer to \cite{acar:genest:neslehova:2012} and \cite{derumigny:2016}.

The paper is structured as follows. In Section~\ref{sec:def+high_level}, we state the main theorem giving conditions on the estimators $\hat{F}_{n,j}$ for the empirical copula based on the pairs \eqref{eq:pseudoobs} to be consistent and asymptotically normal with convergence rate $ 1/\sqrt{n}$. In Section~\ref{sec:main:loclin}, we then show that the smoothed local linear estimator satisfies the requirements. The theory is illustrated by numerical experiments. Auxiliary results of independent interest on the smoothed local linear estimator are stated in Section~\ref{sec:loc_lin_results}. The proofs build upon empirical process theory and their details are spelled out in the appendices.

\section{Empirical conditional copula}
\label{sec:def+high_level} 

 The goal of this section is to present the mathematical framework of the paper (Section~\ref{subsec:def}) and to provide general conditions on the estimated margins that ensure the weak convergence of the estimated conditional copula (Section~\ref{subsec:high_level}).

\subsection{Set-up and definitions}
\label{subsec:def}

Let $f_{X, \bY}$ be the density function of the random triple $(X, \bY) = (X, Y_1, Y_2)$. Let $f_X$ and $S_X = \{ x \in \R : f_X(x) > 0 \}$ denote the density and the support of $X$, respectively. 
The conditional cumulative distribution function of $\bY$ given $X=x$ is given by
\begin{align*}
  H(\by \mid x) 
  = \Pr( Y_1 \le y_1, Y_2 \le y_2 \mid X = x ) 
  = 
  \int_{-\infty}^{y_1}\int_{-\infty}^{y_2} 
    \frac{f_{X,\bY}(x,z_1,z_2)}{f_X(x)} \, 
  \diff z_2 \, \diff z_1,
\end{align*}
for $\by=(y_1,y_1)\in \R^2$ and $x\in S_X$. Since $H(\point|x)$ is a continuous bivariate cumulative distribution function, its copula is given by the function
\begin{equation*}
  C( \bu \mid x) =
  \Pr\left\{ F_{1}(Y_1| X)\,\leq\, u_1 , \; F_{2}(Y_2| X)\,\leq\, u_2\mid X=x  \right\},
\end{equation*}
for $\bu = (u_1, u_2) \in [0, 1]^2$ and $x \in S_X$, where $F_1(\point|x)$ and $F_2(\point|x)$ are the margins of $H(\point|x)$. 


We make the simplifying assumption that the copula of $H(\point|x)$ does not depend on $x$, i.e., $C(\point|x) \equiv C(\point)$. This assumption is equivalent to the one that
\begin{align}
\label{assimplifying}
\text{$ \bigl( F_{1}(Y_1|X), F_{2}(Y_2|X) \bigr)$ is independent of $X$.}
\end{align} 
Under \eqref{assimplifying}, the copula of $H(\point|x)$ is given by $C$ for every $x \in S_X$.  
It is worth mentioning that, for any $j\in \{1,2\}$, and even without the simplifying assumption,
\begin{align}\label{factuniformdist}
 \text{$F_{j}(Y_j|X)$ is uniformly distributed on $[0,1]$ and independent of $X$}.
\end{align} 

Let $(X_i, Y_{i1}, Y_{i2})$, for $i \in \{1, \ldots, n\}$, be independent and identically distributed random vectors, with common distribution equal to the one of $(X, Y_1, Y_2)$.
Our aim is to estimate the conditional copula $C$ without any further structural or parametric assumptions on $C$ or on $F_j(\point|x)$. A reasonable procedure is to estimate the conditional margins in some way, producing random functions $\hat{F}_{n,j}(\point|x)$, and then proceed with the pseudo-observations $\hat{F}_{n,j}(Y_{ij}|X_i)$ from $C$. Exploiting the knowledge that $C$ is a copula, we estimate it by  the empirical copula, $\hat{C}_n$, of those pseudo-observations. Formally, let $\hat{G}_{n,j}$, for $j \in \{1,2\}$, be the empirical distribution function of the pseudo-observations $\hat{F}_{n,j}(Y_{ij}|X_i)$, $i \in \{1,\ldots,n\}$, i.e., 
\begin{align*}
\hat G_{n,j}(u_j) = \frac{1}{n} \sum_{i=1}^n \ind _{\{ \hat F_{n,j}(Y_{ij}|X_i)\, \leq\, u_j\}},\qquad u_j\in[0,1].
\end{align*} 
The generalized inverse of a univariate distribution function $F$ is defined as
\begin{equation}
\label{eq:genInv}
  F^-(u) = \inf \{ y \in \reals : F(y) \ge u \}, \qquad u \in [0, 1].
\end{equation}
Let $\hat{G}_{n,j}^-$ be the generalized inverse of $\hat{G}_{n,j}$. The empirical conditional copula, $\hat{C}_n$, is defined by
\begin{align}\label{def:conditionalempiricalcopula}
\hat C_n(\bu )= \frac{1}{n} \sum_{i=1}^n \ind _{\{ \hat F_{n,1}(Y_{i1}|X_i)\,  \leq \, \hat G^{-}_{n,1}(u_1) \}}\ind _{\{ \hat F_{n,2}(Y_{i2}|X_i)\, \leq \, \hat G^{-}_{n,2}(u_2)\}},\qquad \bu \in [0, 1]^2,
\end{align}
the empirical copula  of the pseudo-observations $(\hat F_{n,1}(Y_{i1}|X_i),\hat F_{n,2}(Y_{i2}|X_i))$, for $i \in \{1, \ldots, n\}$.

We introduce an oracle copula estimator, defined as the empirical copula based on the unobservable random pairs $(F_{1}(Y_{i1}|X_i),F_{1}(Y_{i2}|X_i))$, $i \in \{1,\ldots,n\}$. Let $\hat{G}_{n,j}^{(or)}$ be the empirical distribution function of the uniform random variables $F_j(Y_{ij}|X_i)$, $i \in \{1,\ldots,n\}$, i.e.,
\[
  \hat G^{(or)}_{n,j}(u_j) 
  = 
  \frac{1}{n} \sum_{i=1}^n 
  \ind_{\{  F_{j}(Y_{ij}|X_i)\, \leq \,  u_j\}}, \qquad u_j\in [0,1].
\]
Let $\hat{G}^{(or)-}_{n,j}$ be its generalized inverse, as in \eqref{eq:genInv}. The oracle empirical copula is defined as
\begin{align*}
\hat C_n^{(or)}(\bu) = \frac{1}{n} \sum_{i=1}^n \ind _{\{  {F}_{1}(Y_{i1}|X_i)\, \leq\,  \hat{{G}}^{(or)-}_{n,1} (u_1)\}}\ind _{\{  {F}_{2}(Y_{i2}|X_i)\, \leq\,  \hat{{G}}^{(or)-}_{n,2} (u_2)\}} , \qquad \bu \in [0,1]^2.
\end{align*}
The oracle empircal copula is not computable in practice as it requires the knowledge of the marginal conditional distributions $F_1$ and $F_2$.

We rely on the following H\"older regularity class. Let  $d \in \mathbb{N} \setminus \{ 0 \}$, $0<\delta\leq 1$, $k\in \mathbb N$, and $M>0$ be scalars and let $S\subset \R^d$ be non-empty, open and convex.  Let $\mathcal C_{k+\delta,M}(S)$ be the space of functions $S\rightarrow \R$ that are $k$ times differentiable and whose derivatives (including the zero-th derivative, that is, the function itself) are uniformly bounded by $M$ and such that every mixed partial derivative of order $l \leq k$, say $f^{(l)}$, satisfies the H\"{o}lder condition 
\begin{align}
\label{eq:Holder}
 \sup_{z\neq \tilde z}\,  \frac{\abs{ f^{(l)}(z)-f^{(l)}(\tilde z) }}{\abs{z-\tilde z}^{\delta}}   \leq M ,
\end{align}
where $\lvert\point\rvert$ in the denominator denotes the Euclidean norm. In particular, $\mathcal C_{1,M}(\R)$ is the space of Lipschitz  functions $\R\rightarrow \R $ bounded by $M$ and with Lipschitz constant bounded by $M$.

\subsection{Asymptotic normality}
\label{subsec:high_level}

We now give a set of sufficient conditions to ensure that $\hat C_n $ and $\hat C_n^{(or)}$ are asymptotically equivalent, i.e., their difference is $o_{\mathbb P}(n^{-1/2})$. As a consequence, $\hat{C}_n$ is consistent and asymptotically normal with convergence rate $O_{\mathbb P}(n^{-1/2})$. In the same spirit as Theorem~2 in \cite{omelka+g+v:15}, some of the assumptions are ``ground-level conditions'' and concern directly the distribution $P$, while others are ``high-level conditions'' and deal with the estimators $\hat F_{n,j}$ of the conditional margins. Given a choice for $\hat{F}_{n,j}$, the high-level conditions need to be verified, as we will do in Section~\ref{sec:main:loclin} for a specific proposal. Ground-level assumptions are denoted with the letter G and high-level conditions are denoted with the letter H. 

\begin{enumerate}[(\text{G}1)]
\item 
 \label{cond:smoothnessdensity1}
The law $P$ admits a density $f_{X,\bY}$ on $S_X\times \R^2 $ such that $S_X$ is a nonempty, bounded, open interval. For some $M>0$  and $\delta>0$, the functions $F_1(\point|\point)$ and $F_2(\point|\point)$ belong to $ \mathcal C_{3+\delta, M}(\mathbb R\times S_X)$ and the function $f_X$ belongs to $ \mathcal C_{2, M}( S_X)$. There exists $b>0$ such that $f_X(x) \ge b$ for every $x\in S_X$. For any $j\in \{1,2\}$ and any $\gamma\in (0,1/2)$, there exists $b_\gamma>0$ such that, for every  $y_j \in  [ F_{j}^- (\gamma |x) ,F_{j}^- (1-\gamma |x) ]$ and every $x\in S_X$, we have
$
f _j(y_j|x) \geq b_\gamma
$.

\item
\label{cond:copula_smoothness}
Let $\dot{C}_j$ and $\ddot{C}_{jk}$ denote the first and second-order partial derivatives of $C$, where $j,k \in \{1, 2\}$.
The copula $C$ is twice continuously differentiable on the open unit square, $(0, 1)^2$.  There exists $\kappa > 0$ such that, for all $\bu =(u_1,u_2)\in (0, 1)^2$ and all $j, k \in \{1, 2\}$,
\[
  \abs{ \ddot{C}_{jk}( \bu ) } \le \kappa \, \{ u_j(1-u_j) \, u_k(1-u_k) \}^{-1/2}.
\]
\end{enumerate}
\begin{enumerate}[(\text{H}1)]
\item
\label{cond:high_level_inverse_inequality}
For any $j\in \{1,2\}$ and any $\gamma\in (0,1/2)$,  with probability going to $1$, we have that for every $x\in S_X$, the function $y_j\mapsto \hat F_{n,j}(y_j|x)$ is continuous on $\mathbb R$ and strictly increasing on $ [ F_{j}^- (\gamma |x) ,F_{j}^- (1-\gamma |x) ]$.
\item For any $j\in\{1,2\}$, we have
\label{cond:high_level_consistency}
\begin{align*}
 \sup_{x\in S_X ,\, y_j\in \mathbb R} \abs{ \hat F_{n,j}(y_j | x) - F_j(y_j|x) } =o_{\mathbb P} (n^{-1/4})  .
\end{align*}
\item\label{cond:high_level_donsker}
For any $j\in\{1,2\}$ and any $\gamma\in(0,1/2)$, there exist positive numbers $(\delta_1,M_1)$ such that, with probability going to $1$,
\begin{align*}
&\{x \mapsto  \hat F_{n,j}^-(u_j|x)  \, :\, u_j\in [\gamma, 1-\gamma]  \}\subset \mathcal C_{1+\delta_1,M_1}(S_{X}).
\end{align*}
\end{enumerate}
The support $S_X$ is assumed to be open so that the derivatives of functions on $S_X$ are defined as usual. Condition~(G\ref{cond:copula_smoothness}) also appears as equation~(9) in \cite{omelka+g+v:09}, where it is verified for some popular copula models (Gaussian, Gumbel, Clayton, Student t).

Let $\ell^\infty(T)$ denote the space of bounded real functions on the set $T$, the space being equipped with the supremum distance, and let ``$\leadsto$'' denote weak convergence in this space \citep{wellner1996}.  Let $\mathbb P$ denote the probability measure on the underlying probability space associated to the whole sequence $(X_i,\bY_i)_{i = 1,2,\ldots}$.

\begin{theorem}
\label{theorem:weakcv_highlevel}
Assume that (G\ref{cond:smoothnessdensity1}), (G\ref{cond:copula_smoothness}), (H\ref{cond:high_level_inverse_inequality}), (H\ref{cond:high_level_consistency}) and (H\ref{cond:high_level_donsker}) hold. If the simplifying assumption~\eqref{assimplifying} holds, then for any $\gamma\in (0,1/2)$, we have
\begin{equation}
\label{eq:oP1}
  \sup_{\bu\in [\gamma,1-\gamma]^2 } 
  \left\lvert 
    n^{1/2} \bigl\{\hat C_n(\bu)  -\hat C_n^{(or)}(\bu) \bigr\}
  \right\rvert 
  = o_{\mathbb P}(1), \qquad n \to \infty.
\end{equation}
Moreover, $n^{1/2} \bigl\{\hat C_n(\bu)  - C (\bu)  \bigr\}  \leadsto \mathbb C$  in $\ell^\infty([\gamma,1-\gamma]^2)$,  the limiting process  $\mathbb C$  having the same law as the random process
\begin{align*}
  \mathbb B(\bu ) 
  - \dot{C}_1(\bu) \, \mathbb B(u_1,1) 
  - \dot{C}_2(\bu)\, \mathbb B(1,u_2),
  \qquad \bu \in [\gamma, 1-\gamma]^2,
\end{align*}
where  $\mathbb B$ is a $C$-Brownian bridge, i.e., a centered Gaussian process with continuous trajectories and with covarance function given by
\begin{equation}
\label{eq:cov:B}
  \operatorname{cov} \bigl\{ \mathbb B(\bu), \, \mathbb B(\bv) \bigr\} 
  = 
  C(u_1\wedge v_1,u_2\wedge v_2)
  -
  C(\bu)\, C(\bv), 
  \qquad (\bu, \bv) \in \bigl([\gamma,1-\gamma]^2\bigr)^2.
\end{equation}
\end{theorem}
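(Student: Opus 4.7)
I would prove the theorem in two steps: first establish the oracle equivalence \eqref{eq:oP1}, and then deduce the weak convergence from it. The second step is almost immediate. Under the simplifying assumption~\eqref{assimplifying}, the random pairs $(F_1(Y_{i1}|X_i), F_2(Y_{i2}|X_i))$ are iid with copula $C$, so $\hat C_n^{(or)}$ is the empirical copula of an iid sample from $C$. Under condition~(G\ref{cond:copula_smoothness}), the classical weak convergence theorem for the empirical copula process of an iid sample yields $\sqrt n\,(\hat C_n^{(or)} - C) \leadsto \mathbb C$ in $\ell^\infty([\gamma,1-\gamma]^2)$, with $\mathbb C$ as described in the statement; combined with \eqref{eq:oP1}, this gives the desired weak convergence for $\hat C_n$.

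The heart of the proof is therefore \eqref{eq:oP1}. The plan is to reduce it, via a standard Hadamard-differentiability argument for the copula functional at a smooth $C$, to the same statement at the level of the bivariate empirical distribution functions $\hat H_n$ and $\hat H_n^{(or)}$ of the pseudo-observations and of the oracle uniforms, respectively, over a domain containing $[\gamma,1-\gamma]^2$ together with the marginal segments $\{1\}\times[\gamma,1-\gamma]$ and $[\gamma,1-\gamma]\times\{1\}$. To attack the reduced statement, I would invoke (H\ref{cond:high_level_inverse_inequality}) to rewrite $\ind\{\hat F_{n,j}(Y_{ij}|X_i)\le v_j\} = \ind\{U_{ij} \le \hat\phi_{n,j}(v_j, X_i)\}$, where $U_{ij} = F_j(Y_{ij}|X_i)$ and $\hat\phi_{n,j}(v_j,x) = F_j(\hat F_{n,j}^-(v_j|x)|x)$; then $\sqrt n\,(\hat H_n - \hat H_n^{(or)})$ becomes an empirical process built from the perturbations of the identity $v_j\mapsto v_j$ by the random functions $\hat\phi_{n,j}(v_j,\cdot)$.

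The next step is to control this empirical process by asymptotic equicontinuity over a suitable Donsker class. Condition~(H\ref{cond:high_level_donsker}) together with the smoothness of $F_j$ from (G\ref{cond:smoothnessdensity1}) ensures that, with probability going to one, $\hat\phi_{n,j}(v_j,\cdot)$ lies in some H\"older ball $\mathcal C_{1+\delta_1',M_1'}(S_X)$ uniformly in $v_j \in [\gamma, 1-\gamma]$. The class of indicators $\{(u_1,u_2,x)\mapsto\ind\{u_1\le\phi_1(x), u_2\le\phi_2(x)\} : \phi_j \in \mathcal C_{1+\delta_1',M_1'}(S_X)\}$ is $P$-Donsker by standard bracketing entropy bounds on H\"older classes over a bounded interval, combined with the fact that each $U_{ij}$ has a bounded density on $[0,1]$. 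Meanwhile, (H\ref{cond:high_level_consistency}) and the lower bound $f_j(y_j|x)\ge b_\gamma$ from (G\ref{cond:smoothnessdensity1}) yield $\sup_{v_j, x}|\hat\phi_{n,j}(v_j,x)-v_j| = o_{\mathbb P}(n^{-1/4})$, which translates, via the bounded density of $U_j$, into an $L^2$-modulus of continuity of order $o_{\mathbb P}(n^{-1/8})$ for the corresponding indicator increments. Asymptotic equicontinuity over the Donsker class then closes the argument.

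The main obstacle I anticipate is precisely this joint Donsker-plus-equicontinuity step: verifying the Donsker property requires a careful use of the H\"older smoothness of $\hat\phi_{n,j}$ in $x$, and converting the uniform $n^{-1/4}$ consistency of $\hat F_{n,j}$ into $L^2$-closeness of the indicators hinges on the bounded density of $U_j$. A secondary nuisance is the boundary behaviour at $v_j = 1$, needed for the delta-method reduction through the marginal processes, which is not directly covered by (H\ref{cond:high_level_donsker}) on $[\gamma,1-\gamma]$ and must be handled by a separate argument modelled on the treatment of marginals in the classical empirical copula theorem.
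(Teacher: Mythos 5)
Your outer two-step structure (oracle equivalence, then weak convergence of the oracle empirical copula by Segers, 2012) is the paper's, and your treatment of the centered empirical-process part of $\sqrt n(\hat C_n - \hat C_n^{(or)})$ — rewriting indicators using (H\ref{cond:high_level_inverse_inequality}), a Donsker argument for indicators indexed by a H\"older ball via (H\ref{cond:high_level_donsker}), and $L^2$-closeness from (H\ref{cond:high_level_consistency}) together with the lower density bound in (G\ref{cond:smoothnessdensity1}) — is the same mechanism as the paper's ``first step,'' which invokes Theorem~2.1 of van der Vaart and Wellner (2007) (empirical processes indexed by estimated functions) for precisely this purpose. So that part of the plan is sound.

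The genuine gap is the bias term, and it is exactly where the hard work lies. The quantity $\sqrt n(\hat H_n - \hat H_n^{(or)})$ you propose to show is $o_{\mathbb P}(1)$ is \emph{not} a centered process: writing it as $\mathbb G_n(f_{n,\bv}-f_{0,\bv}) + \sqrt n\,P(f_{n,\bv}-f_{0,\bv})$, asymptotic equicontinuity over a Donsker class controls only the first summand. The second summand is of the order $\sqrt n\,\sup_{y,x}\lvert \hat F_{n,j}(y|x)-F_j(y|x)\rvert$, which under (H\ref{cond:high_level_consistency}) is only $o_{\mathbb P}(n^{1/4})$, not $o_{\mathbb P}(1)$. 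Thus your reduced statement $\sqrt n(\hat H_n - \hat H_n^{(or)})\to 0$ is false in general; in fact $\sqrt n(\hat H_n - C)$ need not even be tight, so the Hadamard-differentiability reduction at $C$ cannot be run in the form you describe. The first-order bias cancels only after passing through the rank transformation $\hat G_{n,j}^-$ that defines the empirical copula, and this cancellation must be argued explicitly — it is the whole reason the parametric rate is attainable. This is the paper's ``second step'': a second-order Taylor expansion of $C$ around $\hat{\bm G}_{n}^{(or)-}(\bu)$ (Lemma~\ref{lem:ddotC}) isolates a linear bias term $\hat W_n(\bu)=\sum_j \dot C_j(\bu)\int \hat\Delta_{n,j}(u_j|x)f_X(x)\,\diff x$, which is shown to be $o_{\mathbb P}(n^{-1/2})$ by combining the already-established negligibility of the centered part with the deterministic identities $\hat C_n(u_j,1)=u_j+O(n^{-1})$ and $\hat C_n^{(or)}(u_j,1)=u_j+O(n^{-1})$ (giving~\eqref{eq:neglectabilitymargins}); the quadratic remainder is $o_{\mathbb P}(n^{-1/2})$ precisely because (H\ref{cond:high_level_consistency}) delivers the $n^{-1/4}$ rate. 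This bias-cancellation step — where the simplifying assumption and the copula structure enter essentially — is absent from your proposal. You in fact flag the Donsker/equicontinuity step as the main obstacle, whereas the paper handles that routinely; the real difficulty, and the step you have not addressed, is the bias.
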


The proof of Theorem~\ref{theorem:weakcv_highlevel} is given in Appendix~\ref{app:proof:weakcv_highlevel}.
The theorem bears resemblance with Theorem~2 in \cite{omelka+g+v:15}. Our approach is more specific 
because we consider a smoothness approach through the spaces $\mathcal C_{1+\delta,M}(S_X)$ to obtain their Donsker property. Exploiting this context, we formulate in (H\ref{cond:high_level_consistency}) a condition on the rate of convergence of the estimator $\hat F_{n,j}$ that does not involve the inverse $\hat F_{n,j}^-$, as expressed in their condition (Yn).

\section{Smoothed local linear estimators of the conditional margins}
\label{sec:main:loclin}

The objective in this section is to provide estimators, $\hat{F}_{n,j}$, $j \in \{1,2\}$, of the conditional marginal distribution functions $F_j(y_j|x) = \Pr(Y_j \le y_j \mid X = x)$  that satisfy conditions (H\ref{cond:high_level_inverse_inequality}), (H\ref{cond:high_level_consistency}) and (H\ref{cond:high_level_donsker}) of Theorem~\ref{theorem:weakcv_highlevel} (Section~\ref{subsec:loc_lin_def}). As a consequence, the empirical conditional copula based on these estimators is a nonparametric estimator of the conditional copula in the simplifying assumption which, by Theorem~\ref{theorem:weakcv_highlevel}, is consistent and asymptotically normal with convergence rate $1/\sqrt{n}$ (Section~\ref{subsec:weakconvempcondcop}). Numerical experiments confirm that the asymptotic theory provides a good approximation to the sampling distribution, at least for large samples (Section~\ref{subsec:numerical}).

\subsection{Definition of the smoothed local linear estimator}
\label{subsec:loc_lin_def}

Let $K:\R\rightarrow [0, \infty) $ and $L:\R\rightarrow [0, \infty)$ be two kernel functions, i.e., nonnegative, symmetric functions  integrating to unity. Let $(h_{n,1})_{n\geq 1}$ and $(h_{n,2})_{n\geq 1}$ two bandwidth sequences that tend to $0$ as $n \to \infty$. For $(y, Y) \in \reals^2$ and $h > 0$, put
\begin{align}
  L_h(y) &= h^{-1} \, L(h^{-1} y), &
\label{def:varphi}
  \varphi_h(y, Y ) &= \int_{-\infty}^{y} L_{h}(t-Y) \, \diff t.
\end{align}
For $j\in\{1,2\}$, we introduce the smoothed local linear estimator of $F_j(y_j | x)$ defined by 
\begin{align}\label{def:ll_estimator}
\hat F_{n,j}(y_j|x) = \hat a_{n,j} ,
\end{align}
where $\hat a_{n,j}$ is the first component of the random pair
\begin{align}\label{eq:app:optimisationLL}
  (\hat a_{n,j}, \hat b_{n,j}) 
  = 
  \argmin_{(a,b)\in\R^2}\, 
  \sum_{i=1}^n 
    \bigl\{
      \varphi_{h_{n,2}}(y_j,Y_{ij} )  - a- b(X_i-x) 
    \bigr\}^2 \, 
    K \left(\frac{x-X_i}{h_{n,1}} \right) ,
\end{align}
where $ \varphi_h$  in \eqref{def:varphi} serves to smooth the indicator function $y \mapsto \1_{\{Y \le y\}}$. The kernels $K$ and $L$ do not have the same role: $L$ is concerned with ``smoothing'' over  $Y_1$ and $Y_2$ whereas $K$ ``localises'' the variable $X$ at $x\in S_X$. For this reason, we purposefully use two different bandwidth sequences $(h_{n,1})_{n\geq 1}$ and $(h_{n,2})_{n\geq 1}$.  We shall see that the conditions on the bandwidth $h_{n,2}$ for the $y$-directions are weaker than the ones for the bandwidth $h_{n,1}$ for the $x$-direction. The assumptions related to the two kernels and bandwidth sequences are stated in (G\ref{cond:kernel}) and (G\ref{cond:bandwidth}) below. 

In the classical regression context, local linear estimators have been introduced in \cite{stone:1977} and are further studied for instance in \cite{fan1996}. In \cite{omelka+g+v:15}, local linear estimators for $F_j(y|x)$ are considered too, but without smoothing in the $y$-variable, so that condition~(H\ref{cond:high_level_donsker}) does not hold; see Section~\ref{sec:furthercomments} below.

\subsection{Weak convergence of the empirical conditional copula} 
\label{subsec:weakconvempcondcop}

We derive the limit distribution of the empirical conditional copula $\hat{C}_n$ in \eqref{def:conditionalempiricalcopula} when the marginal conditional distribution functions are estimated via the smoothed local linear estimators $\hat{F}_{n,j}(y_j|x)$ in \eqref{def:ll_estimator}. We need the following additional conditions. 

\begin{enumerate}[(\text{G}1)]\setcounter{enumi}{2}
\item
\label{cond:kernel}
The kernels $K$ and $L$ are bounded, nonnegative, symmetric functions on $\reals$, supported on  $(-1, 1)$, and such that $\int L(u) \, \diff u = \int K(u) \, \diff u = 1$. The function $L$ is continuously differentiable on $\R$ and its derivative is a bounded real function of bounded variation. The function $K$ is twice continuously differentiable on $\R$ and its second-order derivative is a bounded real function of bounded variation.
\item
\label{cond:bandwidth2}
There exists $\alpha > 0$ such that the bandwidth sequences $h_{n,1} > 0$ and $h_{n,2} > 0$ satisfy, as $n \to \infty$,
\begin{align*}
&nh_{n,1}^8\rightarrow 0,\qquad nh_{n,2}^8\rightarrow 0,\qquad   h_{n,1}^{-1-\alpha/2}h_{n,2}^2 \to 0, \\
&\frac{nh_{n,1}^{3+\alpha}}{\abs{\log h_{n,1}}}   \rightarrow \infty,\qquad \frac{nh_{n,1}^{1+\alpha}h_{n,2}}{\abs{\log h_{n,1}h_{n,2}}}\to \infty .
\end{align*}  
\end{enumerate}
Condition (G\ref{cond:bandwidth2}) implies in particular that the bandwidth sequences are small enough to ensure that the bias associated to the estimation of $F_1$ and $F_2$ does not affect the asymptotic distribution of the empirical conditional copula. An interesting situation occurs when the bandwidths satisfy $h_{n,2} / h_{n,1}^{1/2+\alpha/4}\rightarrow 0$ and $h_{n,1}^2 / h_{n,2}\rightarrow 0$. Then the above condition becomes
\begin{align*}
&nh_{n,1}^8\rightarrow 0,\quad nh_{n,2}^8\rightarrow 0,\quad  \frac{nh_{n,1}^{3+\alpha}}{\abs{\log h_{n,1}}} \rightarrow \infty.
\end{align*}  
Hence the conditions on the bandwidth $h_{1,n}$ are more restrictive than the ones on $h_{2,n}$. Typically, $h_{n,2}$ might be chosen smaller than $h_{n,1}$ as it does not need to satisfy ${nh_{n,1}^{3+\alpha}}/{\abs{\log h_{n,1}}} \rightarrow \infty$. This might result in a smaller bias. By way of comparison, in the location-scale model, no smoothing in the $y$-direction is required, but \cite{omelka+g+v:15} still require the stronger condition that $nh_{n,1}^{5}\rightarrow 0$ and $nh_{n,1}^{3+\alpha} / \log n \to \infty$.

\begin{theorem}
\label{theorem:weakcv}
Assume that (G\ref{cond:smoothnessdensity1}), (G\ref{cond:copula_smoothness}), (G\ref{cond:kernel}) and (G\ref{cond:bandwidth2}) hold. Then (H\ref{cond:high_level_inverse_inequality}), (H\ref{cond:high_level_consistency}) and (H\ref{cond:high_level_donsker}) are valid. If the simplifying assumption~\eqref{assimplifying} also holds, then for any $\gamma\in (0,1/2)$, equation~\eqref{eq:oP1} is satisfied and $n^{1/2} (\hat C_n  - C)  \leadsto \mathbb C$  in $\ell^\infty([\gamma,1-\gamma]^2)$, where the limiting process  $\mathbb C$ is defined in the statement of Theorem \ref{theorem:weakcv_highlevel}.
\end{theorem}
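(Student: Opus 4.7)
The strategy is to reduce the claim to the high-level Theorem~\ref{theorem:weakcv_highlevel} by verifying its three conditions (H\ref{cond:high_level_inverse_inequality}), (H\ref{cond:high_level_consistency}) and (H\ref{cond:high_level_donsker}) for the smoothed local linear estimators $\hat F_{n,j}$ of \eqref{def:ll_estimator}--\eqref{eq:app:optimisationLL}. Once these are established, Theorem~\ref{theorem:weakcv_highlevel} immediately delivers the oracle equivalence \eqref{eq:oP1} and the weak convergence of $n^{1/2}(\hat C_n - C)$ to $\mathbb C$, so no further argument is needed at the copula level.

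For (H\ref{cond:high_level_inverse_inequality}), I would solve the weighted least squares problem \eqref{eq:app:optimisationLL} in closed form, writing $\hat F_{n,j}(y_j|x)$ as a weighted average of $\varphi_{h_{n,2}}(y_j,Y_{ij})$ with weights depending only on $(X_i,x)$. Differentiating in $y_j$ replaces $\varphi_{h_{n,2}}(y_j,Y_{ij})$ by $L_{h_{n,2}}(y_j-Y_{ij})\geq 0$ and produces a local linear estimator of the conditional density $f_j(y_j|x)$. By the uniform consistency results of Section~\ref{sec:loc_lin_results} and the lower bound $f_j(y_j|x)\geq b_\gamma$ in (G\ref{cond:smoothnessdensity1}), this derivative is bounded below by $b_\gamma/2$ on $[F_j^-(\gamma|x),F_j^-(1-\gamma|x)]\times S_X$ with probability tending to one; continuity in $y_j$ follows from that of $\varphi_{h_{n,2}}$.

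For (H\ref{cond:high_level_consistency}), I would split the error into a bias and a stochastic term. The bias is controlled by a Taylor expansion that exploits the $\mathcal C_{3+\delta,M}$ regularity of $F_j$ from (G\ref{cond:smoothnessdensity1}) together with the reproducing property of local linear regression for affine functions, yielding a bound of order $h_{n,1}^2+h_{n,2}^2$, which is $o(n^{-1/4})$ by the first two limits in (G\ref{cond:bandwidth2}). The stochastic fluctuation is handled via a Talagrand/Einmahl--Mason type inequality applied to the VC-subgraph classes generated by $K$ and $L$, and gives a uniform rate of order $\sqrt{|\log(h_{n,1}h_{n,2})|/(nh_{n,1})}$ in $(y_j,x)$, again $o_{\mathbb P}(n^{-1/4})$ under (G\ref{cond:bandwidth2}). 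These empirical process bounds are exactly of the type collected in Section~\ref{sec:loc_lin_results}.

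The main obstacle is (H\ref{cond:high_level_donsker}), and I would proceed in two stages. First, using the $C^2$-smoothness of $K$ from (G\ref{cond:kernel}) I would show that, with probability tending to one, $x\mapsto\hat F_{n,j}(y_j|x)$ lies in a fixed Hölder ball $\mathcal C_{1+\delta,M'}(S_X)$ uniformly in $y_j$, for some $\delta, M'>0$. This requires differentiating the closed-form solution twice in $x$ and controlling the resulting ratios of kernel sums by uniform convergence of the local linear weights; the Hölder modulus of the derivative is obtained from the bounded variation of the derivatives of $K$ imposed in (G\ref{cond:kernel}), combined with the bandwidth constraint $h_{n,1}^{-1-\alpha/2}h_{n,2}^2\to 0$ from (G\ref{cond:bandwidth2}). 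Second, applying the implicit function theorem to the identity $\hat F_{n,j}(\hat F_{n,j}^-(u_j|x)|x)=u_j$, and using the uniform lower bound $\partial_{y_j}\hat F_{n,j}\geq b_\gamma/2$ established at the (H\ref{cond:high_level_inverse_inequality}) step, I would transfer the Hölder smoothness in $x$ from $\hat F_{n,j}$ to $\hat F_{n,j}^-$, obtaining (H\ref{cond:high_level_donsker}) with $\delta_1=\delta$ and $M_1$ chosen large enough. The delicate point is to track constants so that the Hölder seminorm of $x\mapsto\hat F_{n,j}^-(u_j|x)$ is bounded uniformly over $u_j\in[\gamma,1-\gamma]$ with probability tending to one; this is precisely why the auxiliary uniform kernel results of Section~\ref{sec:loc_lin_results} need to cover first derivatives and their Hölder modulus, not merely pointwise values.
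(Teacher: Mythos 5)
Your plan reproduces the paper's own proof structure: reduce to verifying (H\ref{cond:high_level_inverse_inequality})--(H\ref{cond:high_level_donsker}) for the smoothed local linear estimator and then invoke Theorem~\ref{theorem:weakcv_highlevel}, with (H\ref{cond:high_level_inverse_inequality}) obtained from the uniform lower bound on $\hat f_{n,j}$ via the uniform consistency of the derivative, and (H\ref{cond:high_level_consistency}), (H\ref{cond:high_level_donsker}) delegated to the auxiliary results that the paper records as Propositions~\ref{prop:uniformCVderivatives} and~\ref{prop:regularity} (bias/stochastic decomposition via Talagrand-type bounds, and transfer of H\"older smoothness from $\hat F_{n,j}$ to $\hat F_{n,j}^-$ via the inverse identity, respectively). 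This is essentially the same argument as in the paper; only the precise value of the H\"older exponent differs cosmetically (the paper obtains $\delta_1=\min(\alpha/2,\delta)$ rather than $\delta$), which is immaterial since (H\ref{cond:high_level_donsker}) requires only some $\delta_1>0$.
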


The proof of Theorem~\ref{theorem:weakcv} is given in Appendix~\ref{app:proof_th_weak_cv} and relies on results on the smoothed local linear estimator presented in Section~\ref{sec:loc_lin_results}.

Distinguishing between the bandwidth sequence $h_{n,1}$ for the $x$-direction on the one hand and the bandwidth sequence $h_{n,2}$ for the $y_1$ and $y_2$-directions on the other hand allows for a weaker assumption than if both sequences would have been required to be the same. In practice, one could even consider, for each $j \in \{1,2\}$, the smoothed local linear estimator $\hat{F}_{n,j}$ based on a bandwidth sequence $h_{n,1}^{(j)}$ for the $x$-direction and a bandwidth sequence $h_{n,2}^{(j)}$ for the $y_j$-direction, yielding four bandwidth sequences in total. However, this would not really lead to weaker assumptions in Theorem~\ref{theorem:weakcv}, since (G\ref{cond:bandwidth2}) would then be required to hold for each pair $(h_{n,1}^{(j)}, h_{n,2}^{(j)})$. The same remark also applies to the kernels $K$ and $L$, which could be chosen differently for each margin $j \in \{1,2\}$. 
The required modification of the formulation of Theorem~\ref{theorem:weakcv} is obvious.

\subsection{Numerical illustrations}
\label{subsec:numerical}

The assertion in \eqref{eq:oP1} that the empirical conditional copula $\hat{C}_n$ is only $o_{\mathbb P}(n^{-1/2})$ away from the oracle empirical copula $\hat{C}_n^{(or)}$ is perhaps surprising, since the estimators of the conditional margins converge at a rate \emph{slower}, rather than faster, than $O_{\mathbb P}(n^{-1/2})$. To support the claim, we performed a number of numerical experiments based on independent random samples from the copula of the trivariate Gaussian distribution (Example~\ref{ex:Gaussian}) with correlations given by $\rho_{1X} = 0.4$, $\rho_{2X} = -0.2$, and $\rho_{12} = 0.3689989$. The conditional copula of $(Y_1, Y_2)$ given $X$ is then equal to the bivariate Gaussian copula with correlation parameter $\rho_{12|X} = 0.5$. Estimation target was the value of the copula at $\bu = (0.5, 0.7)$.

To estimate the conditional margins, we used the smoothed local linear estimator \eqref{def:ll_estimator}--\eqref{eq:app:optimisationLL} based on the triweight kernel $K(x) = (35/32) (1 - x^2)^3 \1_{[-1, 1]}(x)$ and the biweight kernel $L(y) = (15/16) (1 - y^2)^2 \1_{[-1, 1]}(y)$, in accordance to assumption~(G\ref{cond:kernel}). The bandwidths where chosen as $h_{n,1} = h_{n,2} = 0.5 n^{-1/5}$, with $n$ the sample size. The experiments were performed within the statistical software environment \textsf{R} \citep{Rlanguage}. The algorithms for the smoothed local linear estimator were implemented in \textsf{C} for greater speed.

Figure~\ref{fig:oracle} illustrates the proximity between the estimator $\hat{C}_n(\bu)$ and the oracle $\hat{C}_n^{(or)}(\bu)$. The left-hand and middle panels show scatterplots of $1\,000$ independent realizations of the pairs $(n^{1/2}\{ \hat{C}_n(\bu) - C(\bu) \}, n^{1/2}\{ \hat{C}_n^{(or)}(\bu) - C(\bu) \})$ based on samples of size $n = 500$ and $n = 2\,000$. As $n$ increases, the points are concentrated more strongly along the diagonal. The linear correlations between the estimator and the oracle for sample sizes between $n = 100$ and $n = 10\,000$ are plotted in the right-hand panel, each point being based on $5\,000$ samples.

The proximity of the sampling distribution of the estimation error $n^{1/2} \{ \hat{C}_n(\bu) - C(\bu) \}$ and the limiting normal distribution is illustrated in Figure~\ref{fig:norm}. According to Theorems~\ref{theorem:weakcv_highlevel} and~\ref{theorem:weakcv}, the limiting distribution of the empirical conditional copula process is $\mathcal{N}(0, \sigma^2(\bu))$. The asymptotic variance is $\sigma^2(\bu) = \operatorname{var} \{ \mathbb{C}(\bu) \} = \operatorname{var} \{ \mathbb B(\bu ) - \dot{C}_1(\bu) \, \mathbb B(u_1,1) - \dot{C}_2(\bu)\, \mathbb B(1,u_2) \}$ and can be computed from \eqref{eq:cov:B} and the knowledge that $C$ is the bivariate Gaussian copula with correlation parameter $\rho_{12|X}$. As the sample size increases, the sampling distribution moves closer to the limiting normal distribution as confirmed in the QQ-plot and density plot in Figure~\ref{fig:norm}, both based on $1\,000$ samples of size $n = 500$. The realized standard deviations of the empirical conditional copula process are given in the right-hand panel, again for sample sizes between $n = 100$ and $n = 10\,000$, each point being based on $5\,000$ samples of size $n$. The limit value $\sigma(\bu) = 0.2080$ is represented as a horizontal line. 



\begin{figure}
\begin{center}
\begin{tabular}{@{}ccc}
\includegraphics[width=0.30\textwidth]{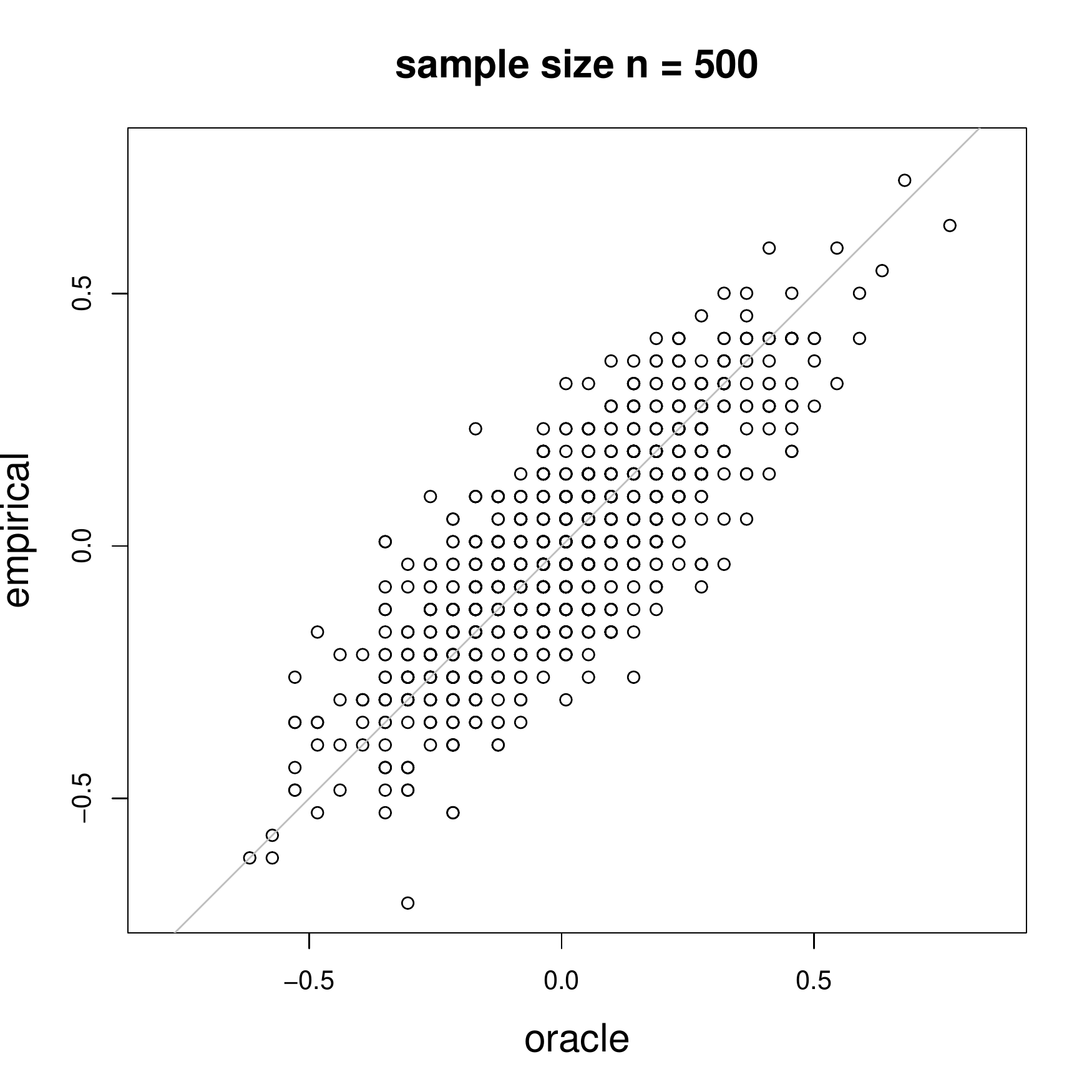}&
\includegraphics[width=0.30\textwidth]{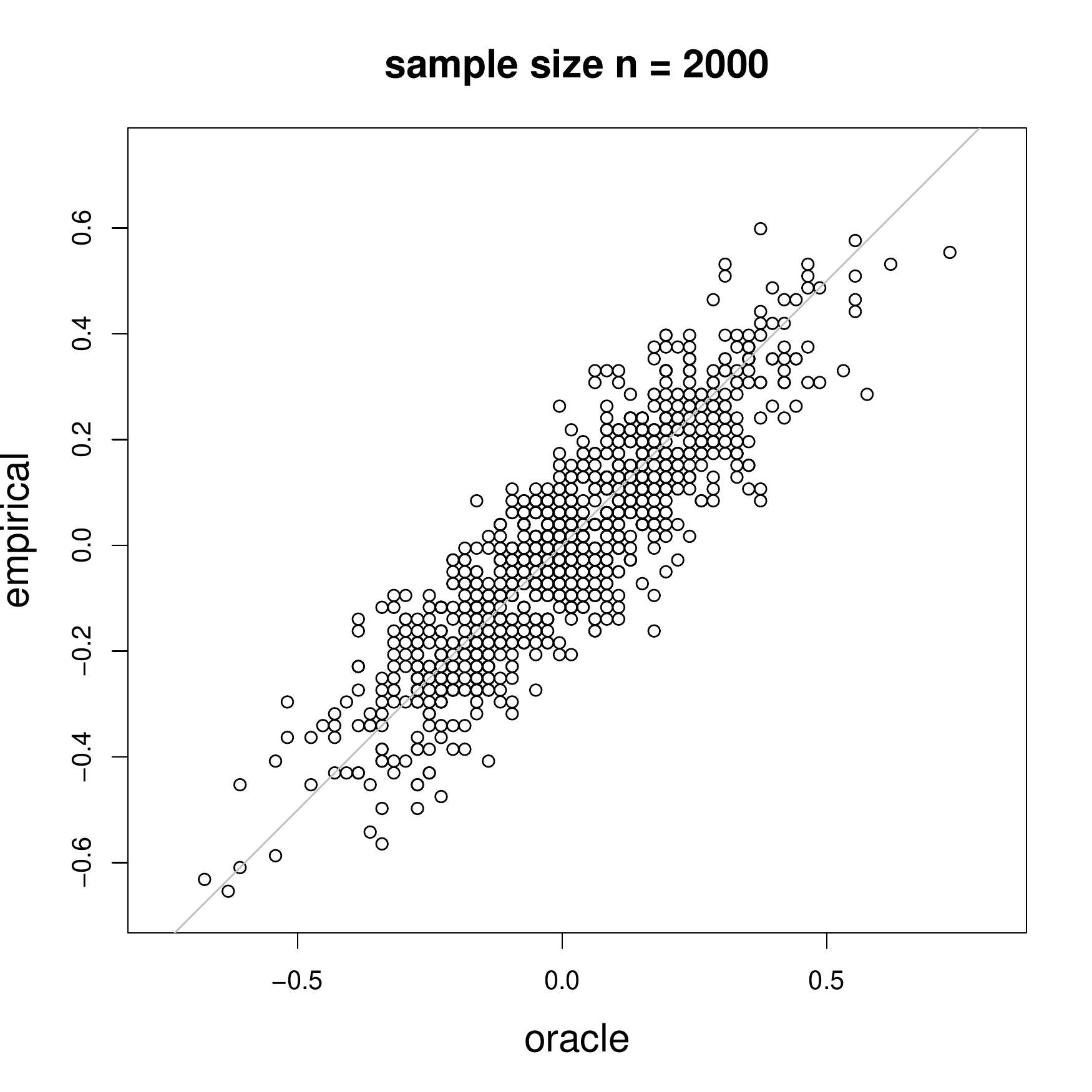}&
\includegraphics[width=0.30\textwidth]{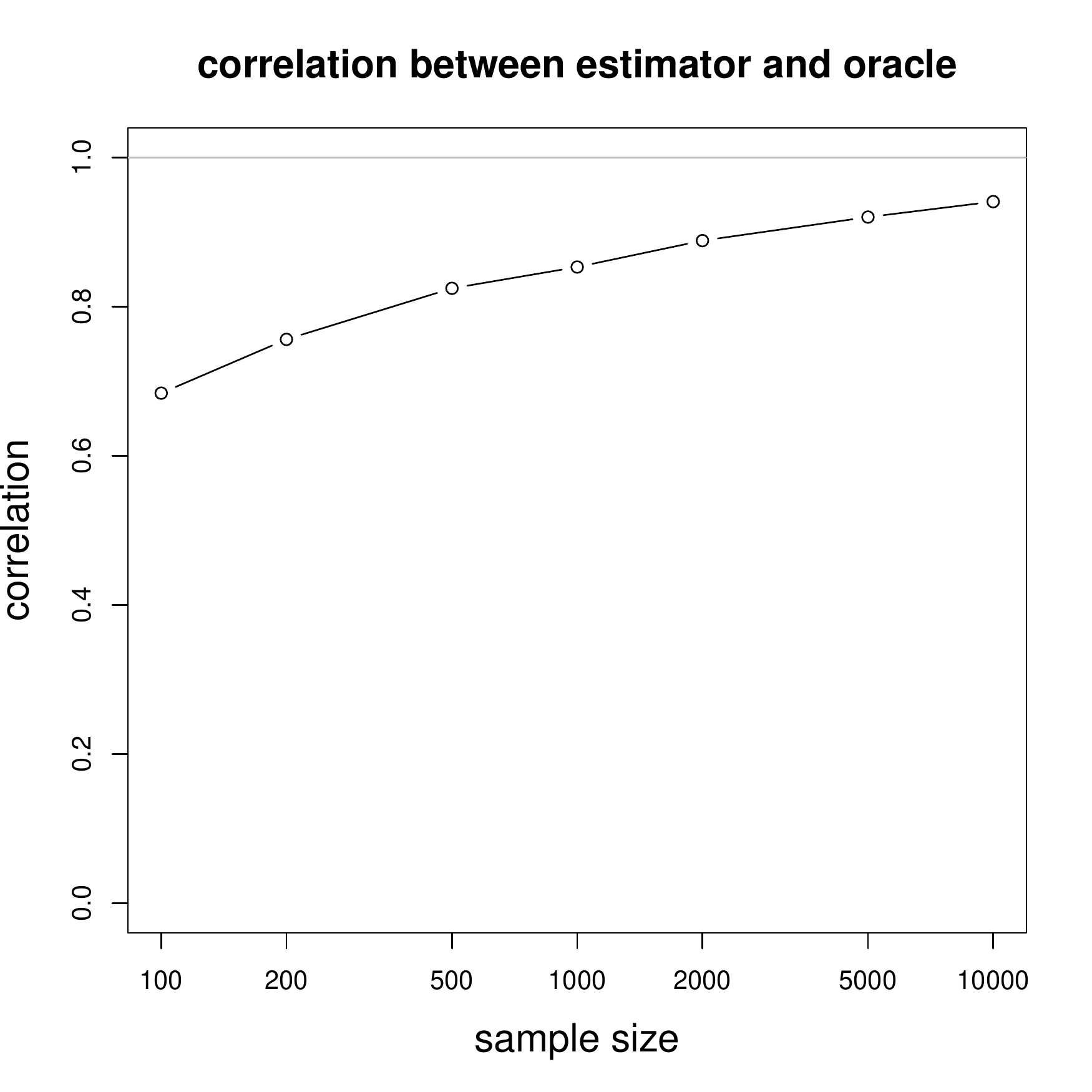}
\end{tabular}
\end{center}
\caption{\label{fig:oracle} Scatterplots of $1\,000$ independent realizations of the pairs $(n^{1/2}\{ \hat{C}_n(\bu) - C(\bu) \}, n^{1/2}\{ \hat{C}_n^{(or)}(\bu) - C(\bu) \})$ based on the trivariate Gaussian copula for sample sizes $n = 500$ (left) and $n = 2\,000$ (middle). Right: correlation between the empirical conditional copula and the oracle empirical copula as a function of the sample size $n$ between $100$ and $10\,000$, each point being based on $5\,000$ samples of size $n$.}
\end{figure}

\begin{figure}
\begin{center}
\begin{tabular}{@{}ccc}
\includegraphics[width=0.30\textwidth]{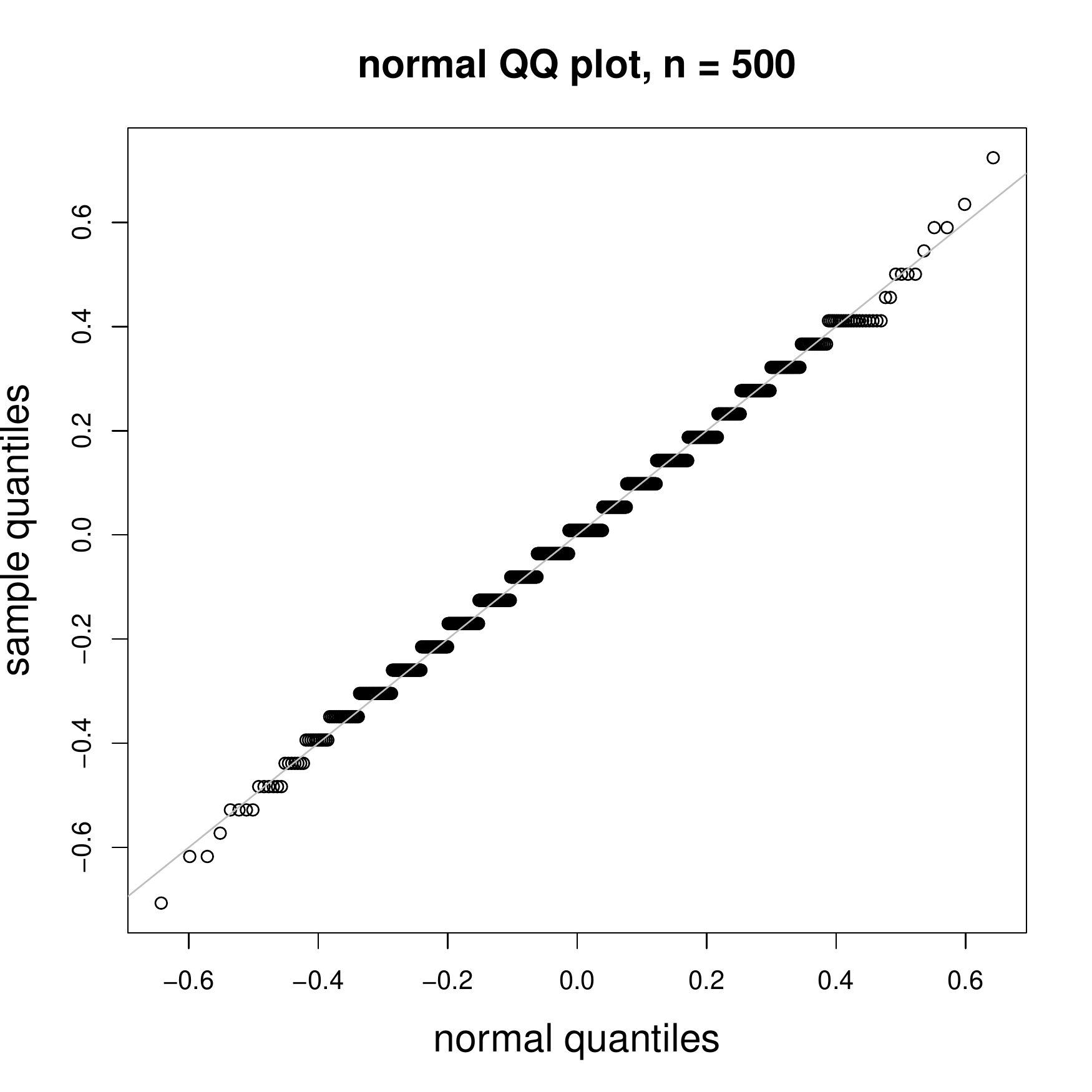}&
\includegraphics[width=0.30\textwidth]{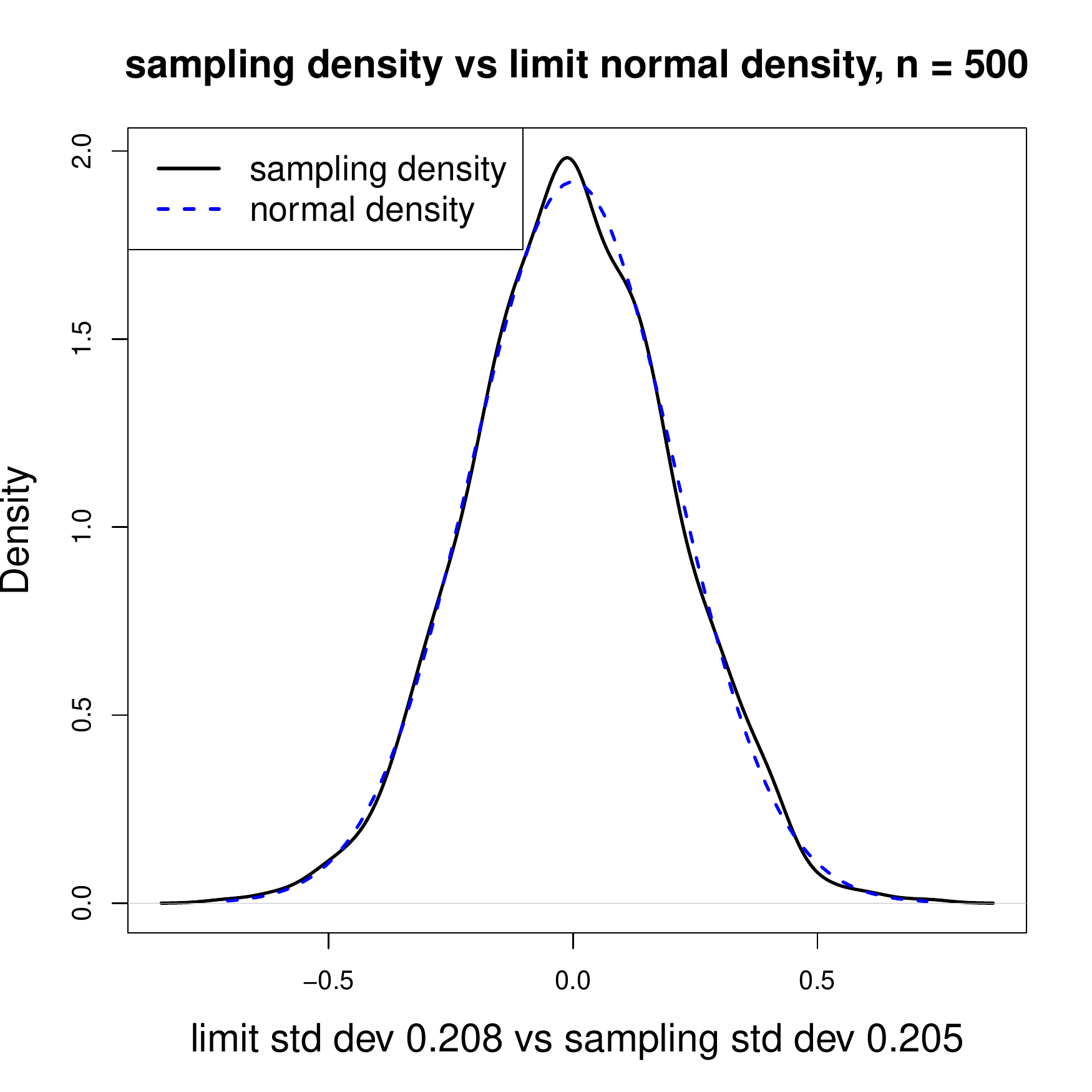}&
\includegraphics[width=0.30\textwidth]{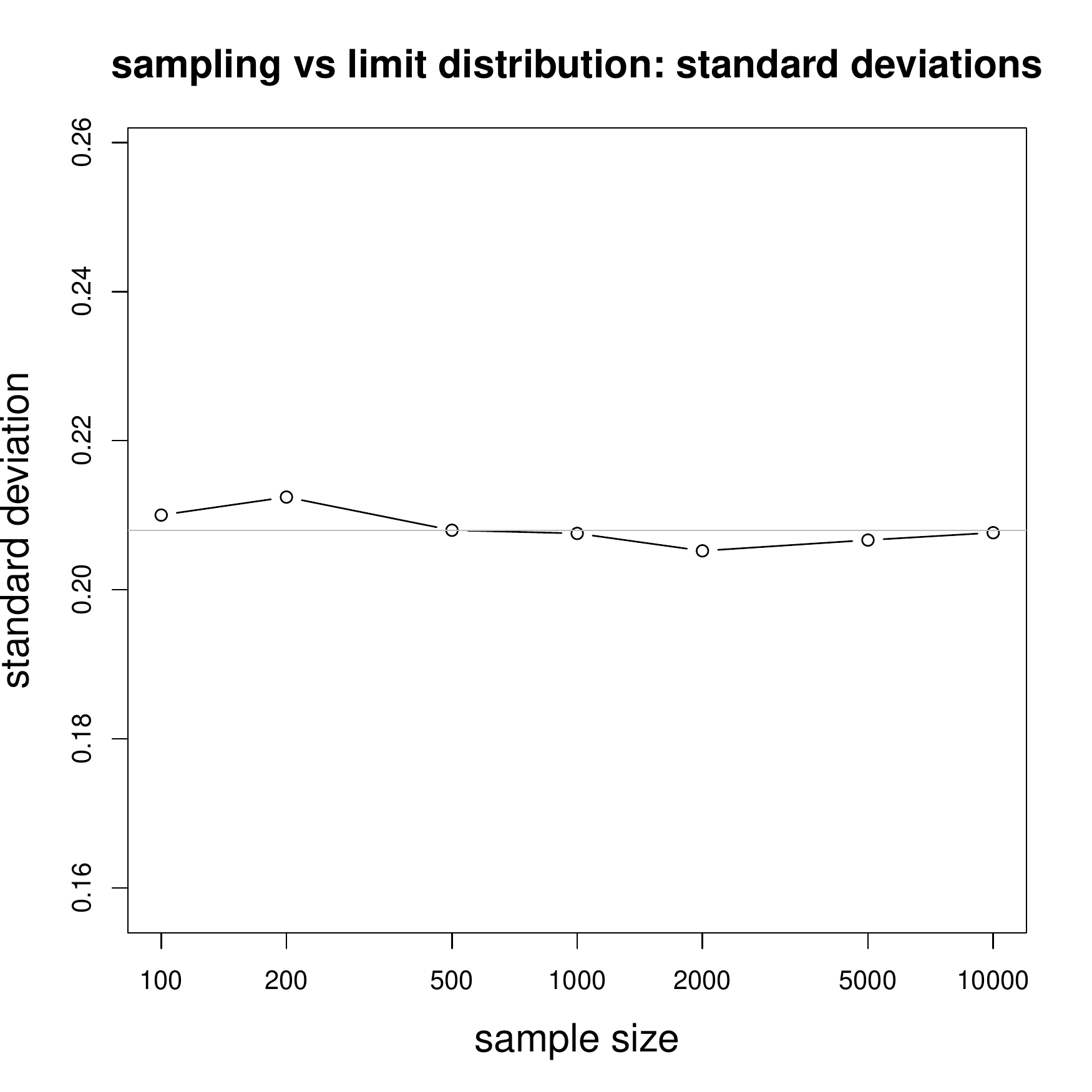}
\end{tabular}
\end{center}
\caption{\label{fig:norm} QQ-plot (left) and density plot (middle) based on $1\,000$ independent realizations of $n^{1/2}\{ \hat{C}_n(\bu) - C(\bu) \}$ for the trivariate Gaussian copula for sample size $n = 500$. Right: standard deviation of $n^{1/2}\{ \hat{C}_n(\bu) - C(\bu) \}$ as a function of $n$ between $100$ and $10\,000$ versus the limit value $\sigma(\bu) = 0.2080$, each point being based on $5\,000$ samples of size $n$.}
\end{figure}

\subsection{Further comments}
\label{sec:furthercomments}


\paragraph{Smoothing in two directions rather than one.}
In \cite{omelka+g+v:15}, local linear estimators that involve smoothing in $x$ but not in $y_j$ are employed to estimate the conditional margins ${F}_{j}(y_j|x)$. 
Interestingly, the approach taken in our paper does not extend to those estimators. Indeed, smoothing in the $y$-direction is crucial to obtain that the random functions $x \mapsto \hat{F}_{n,j}^-(u|x)$, when $u$ ranges over $[\gamma,1-\gamma]$, are included in a H\"{o}lder regularity class (Proposition~\ref{prop:regularity} and its proof). 
Our approach is based on the result that the class of functions formed by the indicators $(y,x)\mapsto \ind _{\{y\,\leq\, q(x)\}} $, where $q: S_X \rightarrow \R$ belongs to $\mathcal C _{1+\delta,M}(S_X) $ with $\delta>0$, $M>0$ and $S_X$ convex, is Donsker  \citep{dudley1999,wellner1996}.
When the estimators of the margins are not smooth with respect to $y_j$, 
the difficulty consists in finding appropriate restrictions on $q$ so that the class $(y,x)\mapsto \ind _{\{y\,\leq\, q(x)\}} $ is Donsker and still contains the functions corresponding to $q(x) = \hat F^-_{n,j}(u|x)$, $u\in[\gamma,1-\gamma]$. Negative results are available and they shed some light on the difficulties that arise. For instance, when $q$ is constrained to be nonincreasing, the class of indicators is equivalent to the class of \textit{lower layers} in $\mathbb R^2$ studied by \cite{dudley1999}, section~12.4, that fails to be Donsker. When $q$ is restricted to $\mathcal C _{1,M}(S_X)$ rather than $\mathcal{C}_{1+\delta,M}(S_X)$, the corresponding class is not Donsker either \citep[section 12.4]{dudley1999}. 

\paragraph{Local linear rather than Nadaraya--Watson smoothing.}
Rather than a local linear smoother in the $x$-direction, one may be tempted to consider a simpler Nadaray--Watson smoother
$
\tilde F_{n,j} (y_j|x) = \int_{-\infty} ^{y_j} \tilde f_{n,j}(y|x) \, \diff y,
$
where 
\begin{align*}
\tilde f_{n,j}(y_j|x) = \frac{\sum_{i=1}^n K_{h_{n,1}}(x-X_i) \, L_{h_{n,2}}(y_j-Y_{ij})}{\sum_{i=1}^n K_{h_{n,1}}(x-X_i)}.
\end{align*}
However, the Nadaraya--Watson density estimator $\tilde{f}_{n,j}$ behaves poorly at the boundary points of the support $S_X$, since the true marginal density $f_X$ of $X$ is not continuous at those boundary points in view of the assumption $\inf\{ f_X(x) : x \in S_X \} \ge b > 0$ \citep{fan:1992}.
In particular, the uniform rates given in Proposition~\ref{prop:uniformCVderivatives} are no longer available, making our approach incompatible with such a Nadaraya--Watson type estimator. Some techniques to bound the metric entropy of such estimators are investigated in \cite{portier:2016}.

\paragraph{Extensions.}
The extension of Theorem \ref{theorem:weakcv} to the whole square unit $[0,1]^2$ might be obtained by verifying ``high-level'' conditions given in Theorem 2 and Corollary 1 in \cite{omelka+g+v:15}. We believe this is not straightforward and needs further research.

The extension to multivariate covariates $\bm X$ is technical and perhaps even impossible as some conflicts between the bias and the variance might arise when choosing the bandwidths $h_{n,1}$ and $h_{n,2}$ in (G\ref{cond:bandwidth2}). As noted by a Referee, using higher-order local polynomial estimators would result in lower bias (under appropriate smoothness conditions) and would (in theory) allow to extend the results presented here to higher-dimensional covariates. Still, completely nonparametric estimation would remain infeasible in high dimensions due to the curse of dimensionality in nonparametric smoothing.

The extension to vectors $\bY$ of arbitrary dimension is probably feasible, but we did not pursue this in view of the motivation from pair-copula constructions.

As suggested by a Referee, an alternative approach to estimating the conditional marginal quantiles could be to use (nonparametric) quantile regression directly instead of inverting conditional distribution functions.

\section{Auxiliary results on the smoothed local linear estimator of the conditional distribution function}
\label{sec:loc_lin_results}

This section contains asymptotic results on the smoothed local linear estimator of the conditional distribution function introduced in Section~\ref{subsec:loc_lin_def}.

The presentation is independent from the copula set-up introduced before, so that the index $j\in\{1,2\}$ will be omitted in this section. Moreover, when possible, we provide weaker assumptions on the bandwidth sequences than the ones introduced in Section~\ref{sec:main:loclin}. The proofs of the results are given in Appendix~\ref{sec:proofs}.

The main difficulty is to show that the estimator $(x,y)\mapsto \hat F_n(y|x)$ introduced in (\ref{def:ll_estimator}) satisfies (H\ref{cond:high_level_donsker}). Our approach relies on bounds on the uniform convergence rates of the estimator and its partial derivatives. Exact rates of uniform strong consistency for Nadaraya--Watson estimators of the conditional distribution function are given in \cite{einmahl2000} and \cite{hardle+j+s:1988}, among others. Strong consistency of derivatives of the Nadaraya--Watson and Gasser--M\"uller estimators of the regression function are studied in \cite{akritas+vk:2001} and \cite{gasser+m:1984}, respectively. References on strong uniform rates for local linear estimators include \cite{masry:1996} and \cite{dony:2006}.

Assume that $(X_1,Y_1), \ldots, (X_n, Y_n)$ are independent and identically distributed random vectors with common distribution equal to the law, $P$, of the random vector $(X, Y)$ valued in $\mathbb R^2$. Assume that $P$ has a density $f_{X,Y}$ with respect to the Lebesgue measure. As before, let $f_X$ and $S_X = \{x \in \R : f_X(x) > 0 \}$ denote the density and the support of $X$, respectively.  The conditional distribution function of $Y$ given $X = x$ is given by
\[
  F(y| x)= \int_{-\infty}^{y} \frac{f_{X,Y}(x,z)}{f_X(x)} \, \diff z ,\qquad y\in \R,\ x\in S_X.
\]

\begin{enumerate}[(\text{G}1')]
\item
The law $P$ of $(X, Y)$ verifies (G\ref{cond:smoothnessdensity1}), i.e., the function $F(\point|\point)$ replaces $F_j(\point|\point)$.
\end{enumerate}
\begin{enumerate}[(\text{G}1')]\setcounter{enumi}{3}
\item
\label{cond:bandwidth}
The bandwidth sequences $h_{n,1} > 0$ and $h_{n,2} > 0$ satisfy, as $n \to \infty$,
\begin{align*}
  h_{n,1}\rightarrow 0, \qquad 
  h_{n,2}\rightarrow 0, \qquad
  \frac{nh_{n,1}^{3}}{\abs{\log h_{n,1}}}  \rightarrow \infty, \qquad
  \frac{nh_{n,1}h_{n,2}}{\abs{\log h_{n,1} h_{n,2} }}  \rightarrow \infty. 
\end{align*} 
\end{enumerate}

The quantity $\hat F_n(y|x) $  can be found as the solution of a linear system of equations derived from \eqref{eq:app:optimisationLL}.  With probability going to $1$, a closed formula is available for $\hat F_n(y|x) $.  Similar expressions are available for the local linear estimator of the regression function \citep[page 55, equation (3.5)]{fan1996}.  For every  $k \in\mathbb N$, define
\begin{align*}
  \hat p_{n,k} (x) &= n^{-1}\sum_{i=1}^n w_{k,h_{n,1}}(x-X_i),\\
  \hat Q_{n,k}(y,x) &= n^{-1}\sum_{i=1}^n  \varphi_{h_{n,2}}(y,Y_{i} ) \, w_{k,h_{n,1}}(x-X_i),
\end{align*}
where $w_{k,h}(\point) = h^{-1}w_k(\point/h)$ and $w_{k} (u)= u ^k K( u )$. In what follows, the function $w_k$ plays the role of a kernel  for smoothing in the $x$-direction.  Let $\mathbb P$ denote the probability measure on the probability space carrying the sequence of random pairs $(X_1, Y_1), (X_2, Y_2), \ldots$.

\begin{lemma}\label{lemma:positivedenominator}
Let $(X_1, Y_1), (X_2, Y_2), \ldots$ be independent random vectors with common law $P$. Assume that (G\ref{cond:smoothnessdensity1}'), (G\ref{cond:kernel}) and (G\ref{cond:bandwidth}')  hold. For some $c > 0$ depending on $K$, we have
\begin{equation}
\label{eq:Pto1}
\lim_{n \to \infty}
\mathbb P\left\{  \hat p_{n,0}(x) \, \hat p_{n,2}(x)-\hat p_{n,1}(x)^2 \geq {b^2c} \  \text{ for all } \ x\in S_X\right\} 
= 1.
\end{equation}
 Consequently, with probability going to $1$, we have
\begin{align}
\label{formula:ll}
  \hat F_n(y|x) 
  = 
  \frac{\hat Q_{n,0}(y,x) \, \hat p_{n,2}(x) - \hat Q_{n,1}(y,x) \, \hat p_{n,1}(x)}%
  {\hat p_{n,0}(x) \, \hat p_{n,2}(x)-\hat p_{n,1}(x)^2},
  \qquad y \in \R, \ x \in S_X.
\end{align}
\end{lemma}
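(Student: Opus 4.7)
The statement naturally splits in two: the uniform lower bound \eqref{eq:Pto1} on the quadratic form $\hat p_{n,0}\hat p_{n,2}-\hat p_{n,1}^2$, and the closed-form formula \eqref{formula:ll}. I would establish the lower bound first, because on the event that this quadratic form is bounded below the coefficient matrix of the normal equations of the weighted least squares problem \eqref{eq:app:optimisationLL} is invertible, and the closed form then drops out by direct linear algebra.

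For the lower bound, the plan is (i) to show that $\mathbb E[\hat p_{n,0}(x)\hat p_{n,2}(x)-\hat p_{n,1}(x)^2]\ge 2b^2 c$ uniformly in $x\in S_X$ for some $c=c(K)>0$, and (ii) to transfer this to the random quadratic form via uniform concentration. For (i), the change of variables $v=(x-u)/h_{n,1}$ yields
\[
  \mathbb E[\hat p_{n,k}(x)] = \int_{V_x} v^k K(v) \, f_X(x-h_{n,1}v) \, \diff v,
  \qquad V_x=\{v : x-h_{n,1}v\in S_X\}.
\]
By (G\ref{cond:smoothnessdensity1}'), $f_X\in\mathcal C_{2,M}(S_X)$ and $f_X\ge b$ on $S_X$, so a first-order Taylor expansion gives
\[
  \mathbb E[\hat p_{n,0}(x)]\,\mathbb E[\hat p_{n,2}(x)] - \mathbb E[\hat p_{n,1}(x)]^2
  = f_X(x)^2 \, \Delta(V_x) + O(h_{n,1})
\]
uniformly in $x\in S_X$, where
\[
  \Delta(V) = \int_V K(v)\,\diff v \cdot \int_V v^2 K(v)\,\diff v - \Bigl(\int_V v K(v) \,\diff v\Bigr)^2
  = \tfrac{1}{2}\int_{V\times V} (v-w)^2 K(v) K(w) \, \diff v \, \diff w.
\]
The second expression makes $V\mapsto \Delta(V)$ manifestly monotone non-decreasing. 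Since $S_X$ is a bounded non-degenerate open interval and $h_{n,1}\to 0$, for $n$ large every set $V_x$, $x\in S_X$, contains at least one of $(-1,0)$ or $(0,1)$, so $\Delta(V_x)\ge \Delta((0,1))$. The latter is strictly positive by Cauchy--Schwarz (equality would force $v$ to be essentially constant on the support of $K\mathbf 1_{(0,1)}$, impossible since $K$ is a kernel). Combined with $f_X\ge b$, this yields the desired uniform lower bound.

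For (ii), under (G\ref{cond:kernel}) the kernels $w_k(u)=u^k K(u)$ have bounded variation and compact support, so the class $\{u\mapsto w_{k,h_{n,1}}(x-u) : x\in S_X\}$ is of VC-type with envelope of order $h_{n,1}^{-1}$. Standard Einmahl--Mason / Talagrand-type inequalities for kernel-weighted empirical processes then give
\[
  \sup_{x\in S_X} \bigl|\hat p_{n,k}(x) - \mathbb E[\hat p_{n,k}(x)]\bigr|
  = O_{\mathbb P}\bigl(\sqrt{|\log h_{n,1}|/(n h_{n,1})}\bigr) = o_{\mathbb P}(1),
\]
since $nh_{n,1}/|\log h_{n,1}|\to\infty$ is implied by (G\ref{cond:bandwidth}'). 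Combining (i) and (ii) yields \eqref{eq:Pto1}.

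On the event in \eqref{eq:Pto1}, the coefficient matrix of the normal equations associated with \eqref{eq:app:optimisationLL} is invertible. Writing the two normal equations, multiplying through by $(nh_{n,1})^{-1}$, and using the symmetry of $K$ together with the definitions of $\hat p_{n,k}$ and $\hat Q_{n,k}$, the system for $(\hat a_{n,j},\hat b_{n,j})$ becomes
\[
  \begin{pmatrix} \hat p_{n,0}(x) & -h_{n,1}\hat p_{n,1}(x) \\ -h_{n,1}\hat p_{n,1}(x) & h_{n,1}^2 \hat p_{n,2}(x) \end{pmatrix}
  \begin{pmatrix} \hat a_{n,j} \\ \hat b_{n,j} \end{pmatrix}
  = \begin{pmatrix} \hat Q_{n,0}(y,x) \\ -h_{n,1}\hat Q_{n,1}(y,x) \end{pmatrix}.
\]
Cramer's rule for the first coordinate gives exactly \eqref{formula:ll}. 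The main obstacle is the boundary analysis in step (i): extracting a uniform positive lower bound on $\Delta(V_x)$ when $x$ is near the endpoints of $S_X$ is what makes the double-integral representation of $\Delta$ essential, since it renders the monotonicity in $V$ transparent. The concentration step (ii) and the final linear algebra are essentially routine.
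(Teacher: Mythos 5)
Your proof is correct and follows essentially the same skeleton as the paper's: uniform concentration of $\hat p_{n,k}$ to its mean (the paper cites its Proposition~\ref{prop:concentration:kernel}, which rests on exactly the Talagrand/VC machinery you invoke), a deterministic lower bound on the expected determinant that exploits $f_X \ge b$ and the observation that the effective integration window $V_x$ always contains one of $(-1,0)$ or $(0,1)$, symmetry of $K$ to treat the two boundary sides at once, and finally inversion of the normal equations (the paper notes the determinant and inverts; you make the Cramer computation explicit). The only place you genuinely diverge is the algebra for the expected-determinant bound. You Taylor-expand $f_X(x-h_{n,1}v)$ about $x$ to extract $f_X(x)^2\,\Delta(V_x)$ with a uniform $O(h_{n,1})$ remainder and then use the identity $\Delta(V)=\tfrac12\int_{V\times V}(v-w)^2K(v)K(w)\,\diff v\,\diff w$ to make monotonicity of $V\mapsto\Delta(V)$ transparent; the paper instead keeps $f_X$ inside the integrals, factoring the expected determinant exactly as $\E\{\hat p_{n,0}(x)\}\int f_X(x-h_{n,1}u)\{u-a_n(x)\}^2K(u)\,\diff u$ with $a_n(x)=\E\{\hat p_{n,1}(x)\}/\E\{\hat p_{n,0}(x)\}$, and then bounds the second factor below by $b\inf_{a}\int_0^1(u-a)^2K(u)\,\diff u$. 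Both routes are sound: your double-integral representation buys manifest monotonicity in $V$, while the paper's factorization plus $\inf$ over $a$ stays exact and avoids having to show the Taylor remainder is uniformly negligible. Your strict positivity of $\Delta((0,1))$ via the equality case of Cauchy--Schwarz is also fine, since $K$ is continuous, nonnegative, symmetric, and integrates to one, so $K\1_{(0,1)}$ has positive mass on an interval.
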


Differentiating $\hat{F}_n(y|x)$ in \eqref{formula:ll} with respect to $y$, we  obtain $\hat f_n(y|x)$, the estimated conditional density of $Y$ given $X  = x$. It is given by
\begin{align}\label{formula:llderivatives}
\hat f_n(y|x) = \frac{\hat q_{n,0}(y,x)\, \hat p_{n,2}(x) - \hat q_{n,1}(y,x)\,\hat p_{n,1}(x)}{\hat p_{n,0}(x)\, \hat p_{n,2}(x)-\hat p_{n,1}(x)^2},
\end{align}
where, for every  $k \in\mathbb N$,
\begin{align*}
&\hat q_{n,k} (y,x) = n^{-1}\sum_{i=1}^n L_{h_{n,1}}(y-Y_{i})\, w_{k,h_{n,2}}(x-X_i).
\end{align*}
To derive the uniform rates of convergence of the smoothed local linear estimator $\hat F_n(y | x)$ and its derivatives, formula~\eqref{formula:ll} permits to work with the quantities $\hat{Q}_{n,k}(y, x)$, $k \in \{0, 1\}$, and $\hat{p}_{n,k}(x)$, $k \in \{0, 1, 2\}$. The asymptotic behavior of the latter quantities is handled using empirical process theory in Proposition~\ref{prop:concentration:kernel}.  For ease of writing, we abbreviate the partial differential operator $\partial^l / \partial^l x$ to $\partial^l_x$.

\begin{proposition}\label{prop:uniformCVderivatives}
Let $(X_1, Y_1), (X_2, Y_2), \ldots$ be independent random vectors with common law $P$.  Assume that (G\ref{cond:smoothnessdensity1}'), (G\ref{cond:kernel}) and (G\ref{cond:bandwidth}')  hold. We have, as $n \to \infty$,
\begin{align*}
&\sup_{x\in S_X, \, y\in \R} \left | \hat F_n (y|x) - F(y|x) \right|= O_{\mathbb P}\left( \sqrt{\frac{\abs{\log h_{n,1}}}{n {h_{n,1}} }} +{h_{n,1}^{2}}+{h_{n,2}^{2}} \right),\\
&\sup_{x\in S_X, \, y\in \R} \left| \partial_x\{ \hat F_n (y|x) -  F(y|x)\} \right|= O_{\mathbb P}\left( \sqrt{\frac{\abs{\log h_{n,1}}}{n {h_{n,1}^{3}} }} +h_{n,1}+h_{n,1}^{-1}{h_{n,2}^{2}} \right),\\
&\sup_{x\in S_X, \, y\in \R} \left| \partial_{x}^2 \{ \hat F_n (y|x) -  F(y|x)\} \right|= O_{\mathbb P}\left( \sqrt{\frac{\abs{\log h_{n,1}} }{n {h_{n,1}^{5}} }} +1 +h_{n,1}^{-2}{h_{n,2}^{2}} \right),\\
&\sup_{x\in S_X, \, y\in \R} \left| \partial_y\{ \hat F_n (y|x) -  F(y|x)\} \right|=  O_{\prob}  \left( \sqrt {\frac{\abs{\log h_{n,1}h_{n,2}}}{n h_{n,1}h_{n,2} }} +h_{n,1}^{2}+h_{n,2}^{2} \right),\\
&\sup_{x\in S_X,\, y\in \R} \left| \partial_{y}^2\{ \hat F_n (y|x) -  F(y|x)\} \right|=  O_{\prob}  \left( \sqrt {\frac{\abs{\log h_{n,1}h_{n,2}}}{n h_{n,1}h_{n,2}^3 }} +h_{n,1}^{1+\delta} + h_{n,2}^{1+\delta}\right),\\
&\sup_{x\in S_X,\, y\in \R} \left|\partial_{y}\partial_{x} \{ \hat F_n(y|x) -  F(y|x)\}\right |= O_{\prob}  \left( \sqrt {\frac{\abs{\log h_{n,1}h_{n,2}}}{n h_{n,1}^3h_{n,2} }} +h_{n,2}^2h_{n,1}^{-1}+h_{n,1} \right).
\end{align*}
\end{proposition}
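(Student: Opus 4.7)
The plan is to start from the explicit quotient representation \eqref{formula:ll},
\[
  \hat F_n(y|x) = \frac{\hat Q_{n,0}(y,x)\,\hat p_{n,2}(x) - \hat Q_{n,1}(y,x)\,\hat p_{n,1}(x)}{\hat p_{n,0}(x)\,\hat p_{n,2}(x) - \hat p_{n,1}(x)^2},
\]
which is valid with probability tending to one by Lemma~\ref{lemma:positivedenominator}, and to split the error as $\hat F_n - F = (\hat F_n - \Exp\hat F_n) + (\Exp\hat F_n - F)$ so that a stochastic part and a deterministic bias part can be treated separately. Because numerator and denominator are smooth functions of finitely many empirical averages of compactly supported kernel functions, any partial derivative $\partial_x^a\partial_y^b\hat F_n$ can be written, via the quotient and product rules, as a rational expression in $\partial_x^j\hat p_{n,k}(x)$ and $\partial_x^j\partial_y^i\hat Q_{n,k}(y,x)$ with $j\le a+2$ and $i\le b$, divided by a positive power of $\hat p_{n,0}\hat p_{n,2} - \hat p_{n,1}^2$. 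The uniform lower bound on that denominator furnished by \eqref{eq:Pto1} reduces the problem to controlling those numerators uniformly over $(x,y)\in S_X\times\R$.

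For the stochastic component I would invoke Proposition~\ref{prop:concentration:kernel} to obtain, uniformly in $(x,y)$,
\[
  \sup_{x,y}\bigl|\partial_x^a\partial_y^b\{\hat Q_{n,k}(y,x) - \Exp\hat Q_{n,k}(y,x)\}\bigr|
  = O_{\Pb}\!\left(\sqrt{\frac{|\log(h_{n,1}h_{n,2}^{\1_{\{b\ge 1\}}})|}{n\,h_{n,1}^{2a+1}\,h_{n,2}^{(2b-1)\vee 0}}}\right),
\]
and analogously for $\hat p_{n,k}$; each additional $x$-derivative costs a factor $h_{n,1}^{-1}$ and each $y$-derivative a factor $h_{n,2}^{-1}$, while $\partial_y\hat Q_{n,k}$ replaces the smoothing factor $\varphi_{h_{n,2}}$ by the genuine kernel $L_{h_{n,2}}$, thereby introducing $y$-entropy that did not appear for $\hat F_n$ itself. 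Condition (G\ref{cond:kernel}), which requires $K$, $L$ and enough of their derivatives to have bounded variation, guarantees that the underlying classes of functions are uniformly VC-subgraph, so that Talagrand-type maximal inequalities deliver exactly the stated $\sqrt{|\log h|/(n\cdot\text{powers of }h)}$ rates. For the bias, I would change variables $u=(x-X_1)/h_{n,1}$, $v=(y-Y_1)/h_{n,2}$ in $\Exp\hat p_{n,k}$ and $\Exp\hat Q_{n,k}$, Taylor-expand $f_X$ and $f_{X,Y}$ to the order permitted by (G\ref{cond:smoothnessdensity1}'), and exploit the symmetry of $K$ and $L$ to kill the first-order terms. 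The standard Fan--Gijbels-type calculation for local linear smoothing then gives $\Exp\hat F_n(y|x) - F(y|x) = O(h_{n,1}^2 + h_{n,2}^2)$ uniformly in $(x,y)$, and differentiation of the bias expansion accounts for the extra $h_{n,1}^{-1}$, $h_{n,1}^{-2}$, $h_{n,1}^{-1}$ multiplying $h_{n,2}^2$ in the $\partial_x$-, $\partial_x^2$- and $\partial_x\partial_y$-rates; the H\"older exponent $\delta$ surfaces in the $\partial_y^2$-rate because only three full derivatives of $F$ are available.

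Putting the pieces together, I would insert the stochastic and bias rates into the quotient-rule expansion of $\partial_x^a\partial_y^b\hat F_n$ and keep only the dominant terms; the resulting bound matches exactly the right-hand side of each line in the proposition. The main obstacle, in my view, is the book-keeping required to verify that Proposition~\ref{prop:concentration:kernel} actually covers all the differentiated-kernel classes simultaneously, uniformly over $x\in S_X$ and $y\in\R$: one has to check that the envelopes and uniform covering numbers of the classes
\[
  \Bigl\{(u,v)\mapsto w_k^{(j)}\bigl((x-u)/h_{n,1}\bigr)\,L^{(i-1)}\bigl((y-v)/h_{n,2}\bigr) : x\in S_X,\ y\in\R\Bigr\},
\]
for the relevant $(i,j,k)$, produce precisely the claimed powers of $h_{n,1}$ and $h_{n,2}$, and that boundary effects of the compactly supported kernel $K$ at $\partial S_X$ do not spoil the rates (local linearity is exactly what prevents this). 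Once that uniform empirical-process step is in hand, the bias expansions and quotient-rule algebra are essentially mechanical.
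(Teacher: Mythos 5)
Your proposal is correct in its broad architecture and arrives at the right rates, but the decomposition you choose differs from the paper's in a meaningful way. You split $\hat F_n - F$ into the fluctuation of the \emph{numerator and denominator blocks} around their unconditional expectations, plus a deterministic bias handled by a Fan--Gijbels--style Taylor expansion of the ratio of expectations. (Note the minor notational slip: the central quantity should be the ratio $\{\Exp\hat Q_{n,0}\,\Exp\hat p_{n,2}-\Exp\hat Q_{n,1}\,\Exp\hat p_{n,1}\}/\{\Exp\hat p_{n,0}\,\Exp\hat p_{n,2}-\Exp\hat p_{n,1}^2\}$ rather than $\Exp\hat F_n$, the expectation of the ratio, which is not tractable.) The paper instead uses a three-way split $\hat Q_{n,k}=\hat\beta_{n,k}+\hat b_{n,k}+\hat\gamma_{n,k}$ in which $\hat\beta_{n,k}$ centres $\varphi_{h_{n,2}}(y,Y_i)$ at its expectation \emph{conditional on $X_i$}, $\hat b_{n,k}$ collects the random local-linear residual, and $\hat\gamma_{n,k}=F(y|x)\hat p_{n,k}(x)-h_{n,1}\hat p_{n,k+1}(x)\partial_xF(y|x)$ is engineered so that the exact identity $\hat\gamma_{n,0}\hat s_{n,2}-\hat\gamma_{n,1}\hat s_{n,1}=F(y|x)$ holds. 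That identity, a structural feature of local linear reproduction of affine functions, extracts the target $F(y|x)$ without any Taylor expansion of a ratio, so the remainder is automatically the sum of the four controllable pieces $\hat\beta_{n,k}\hat s_{n,2-k}$ and $\hat b_{n,k}\hat s_{n,2-k}$. This buys robustness against boundary effects and avoids the bookkeeping of verifying that the ratio of expectations equals $F+O(h^2)$ uniformly. Conversely, your route is slightly more transparent about \emph{where} the $h_{n,1}^{-l}h_{n,2}^2$ cross-terms come from (differentiating the localizing kernel against the $y$-smoothing bias), and would in principle give a sharper bias for the differentiated estimators if one exploited the vanishing integral $\int w_k'=0$; the paper is content with the cruder uniform bound on $\hat r_{n,k}^{(l)}$ since the stated rates are all that is needed downstream. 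Your concentration rates for $\partial_x^a\partial_y^b\hat Q_{n,k}$ and the observation that $\partial_y$ replaces $\varphi_{h_{n,2}}$ by $L_{h_{n,2}}$, changing the envelope and introducing the $h_{n,2}$-entropy and $h_{n,2}^{-1}$ variance, match the paper's use of Proposition~\ref{prop:concentration:kernel} exactly.
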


For the sake of brevity, the previous proposition is stated under (G\ref{cond:bandwidth}') although the first assertion remains true under the weaker condition $ h_{n,1} + h_{n,2} \to 0$ and ${nh_{n,1}}/{\abs{\log h_{n,1}}}\to \infty$.

Recall the generalized inverse in \eqref{eq:genInv}. Uniform convergence rates for the estimated conditional quantile function $F_n^-(u|x)$ are provided in the following proposition.

 \begin{proposition}\label{prop:quantiletransformunif}
Let $(X_1, Y_1), (X_2, Y_2), \ldots $ be independent random vectors with common law $P$. Assume that (G\ref{cond:smoothnessdensity1}'), (G\ref{cond:kernel}), (G\ref{cond:bandwidth}') hold. For any $\gamma\in (0,1/2)$, we have, as $n \to \infty$,
\begin{align*}
&\sup_{x\in S_X,\, u\in [\gamma,1-\gamma]} \left| F\big(\hat  F_{n} ^- ( u|x )|x\big)  - u \right|=O_{\mathbb P}\left( \sqrt{\frac{\abs{\log h_{n,1}}}{n {h_{n,1}} }} +{h_{n,1}^{2}}+{h_{n,2}^{2}} \right),\\
&\sup_{x\in S_X,\, u\in [\gamma,1-\gamma]} \left|\hat  F_{n} ^- ( u|x )  -  F^-(u|x) \right| =O_{\mathbb P}\left( \sqrt{\frac{\abs{\log h_{n,1}}}{n {h_{n,1}} }} +{h_{n,1}^{2}}+{h_{n,2}^{2}} \right).
 \end{align*}
\end{proposition}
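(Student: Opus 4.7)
The plan is to derive both assertions from the uniform rate for $\hat F_n(y|x)-F(y|x)$ established in Proposition~\ref{prop:uniformCVderivatives} (call this rate $r_n = \sqrt{\abs{\log h_{n,1}}/(nh_{n,1})}+h_{n,1}^2+h_{n,2}^2$) together with a quantile-transform argument based on the strict positivity of the conditional density on compact quantile intervals, as guaranteed by (G\ref{cond:smoothnessdensity1}').

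For the first assertion, I observe that $\varphi_{h_{n,2}}(y,Y_i)$ in \eqref{def:varphi} is continuous in $y$ (it is an integral of a bounded function), so by the closed form \eqref{formula:ll}, the map $y\mapsto \hat F_n(y|x)$ is continuous on $\R$ on the event described by Lemma~\ref{lemma:positivedenominator}, which has probability tending to $1$. On this event, continuity implies $\hat F_n\bigl(\hat F_n^-(u|x)\mid x\bigr)=u$ for every $u\in(0,1)$ at which $\hat F_n^-(u|x)$ is finite. Therefore, uniformly in $u\in[\gamma,1-\gamma]$ and $x\in S_X$,
\[
  \bigl\lvert F\bigl(\hat F_n^-(u|x)\mid x\bigr)-u\bigr\rvert
  =\bigl\lvert F\bigl(\hat F_n^-(u|x)\mid x\bigr)-\hat F_n\bigl(\hat F_n^-(u|x)\mid x\bigr)\bigr\rvert
  \le\sup_{x,y}\bigl\lvert F(y|x)-\hat F_n(y|x)\bigr\rvert=O_{\mathbb P}(r_n),
\]
which proves the first statement.

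For the second assertion, I would fix an auxiliary $\gamma'\in(0,\gamma)$ and invoke (G\ref{cond:smoothnessdensity1}') to obtain $b_{\gamma'}>0$ with $f(y|x)\ge b_{\gamma'}$ for all $x\in S_X$ and $y\in[F^-(\gamma'|x),F^-(1-\gamma'|x)]$. A preliminary localisation step is to show that, with probability tending to $1$, $\hat F_n^-(u|x)\in[F^-(\gamma'|x),F^-(1-\gamma'|x)]$ uniformly in $(u,x)\in[\gamma,1-\gamma]\times S_X$. This follows from the rate $r_n\to 0$: at $y=F^-(\gamma'|x)$ one has $F(y|x)=\gamma'<\gamma\le u$, so $\hat F_n(y|x)<u$ uniformly in $x$, and symmetrically at $y=F^-(1-\gamma'|x)$. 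Once this is in place, the mean value theorem applied to the differentiable map $F(\cdot|x)$ between $F^-(u|x)$ (which lies in the same interval since $u\in[\gamma,1-\gamma]\subset[\gamma',1-\gamma']$) and $\hat F_n^-(u|x)$ yields
\[
  \bigl\lvert F\bigl(\hat F_n^-(u|x)\mid x\bigr)-F\bigl(F^-(u|x)\mid x\bigr)\bigr\rvert
  \ge b_{\gamma'}\,\bigl\lvert\hat F_n^-(u|x)-F^-(u|x)\bigr\rvert.
\]
Since $F(F^-(u|x)|x)=u$ by continuity of $F(\cdot|x)$, rearranging and invoking the first assertion gives the claimed rate $O_{\mathbb P}(r_n)$ for $\sup_{x,u}\lvert\hat F_n^-(u|x)-F^-(u|x)\rvert$.

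The main obstacle is the localisation step ensuring $\hat F_n^-(u|x)$ stays in $[F^-(\gamma'|x),F^-(1-\gamma'|x)]$ uniformly; this requires a careful use of monotonicity together with the uniform convergence of $\hat F_n$ to $F$, along with the observation that $F^-(\gamma'|\cdot)$ and $F^-(1-\gamma'|\cdot)$ are well-separated from $F^-(u|\cdot)$ for $u\in[\gamma,1-\gamma]$ (a consequence of the lower bound on the conditional density). Everything else reduces to routine applications of Proposition~\ref{prop:uniformCVderivatives} and the mean value theorem.
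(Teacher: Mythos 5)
Your proof is correct and follows essentially the same route as the paper's: both deduce the first assertion from the uniform rate in Proposition~\ref{prop:uniformCVderivatives} together with the identity $\hat F_n(\hat F_n^-(u|x)|x)=u$ (Lemma~\ref{lemma:FoF^-}), and both pass to the second via the mean value theorem and the lower density bound $b_{\gamma'}$. The only cosmetic difference is that you apply the mean value theorem to $F(\cdot|x)$ between $F^-(u|x)$ and $\hat F_n^-(u|x)$ after localising the latter, whereas the paper applies it to $F^-(\cdot|x)$ between $F(\hat F_n^-(u|x)|x)$ and $u$; the two are equivalent and yield the same $1/b_{\gamma'}$ factor.
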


The convergence rates in Propositions~\ref{prop:uniformCVderivatives} and \ref{prop:quantiletransformunif} serve to show that 
the estimated quantile functions $x\mapsto \hat F_n^- (u|x) $, as $u$ varies in $[\gamma,1-\gamma]$, belong to a certain H\"{o}lder regularity class. The bandwidths $h_{n,1}$ and $h_{n,2}$ are required to be large enough.

 \begin{enumerate}[(\text{G}1'')]\setcounter{enumi}{3}
\item \label{cond:bandwidth1bis} There exists $\alpha > 0$ such that the bandwidth sequences $h_{n,1} > 0$ and $h_{n,2} > 0$ satisfy, as $n \to \infty$,
\begin{align*}
&h_{n,1}\rightarrow 0,\qquad h_{n,2}\rightarrow 0,\qquad   h_{n,1}^{-1-\alpha/2}h_{n,2}^2 \to 0, \\
&\frac{nh_{n,1}^{3+\alpha}}{\abs{\log h_{n,1}}}   \rightarrow \infty,\qquad \frac{nh_{n,1}^{1+\alpha}h_{n,2}}{\abs{\log h_{n,1}h_{n,2}}}\to \infty .
\end{align*}  
\end{enumerate}

Note that in the case that $h_{n,1}=h_{n,2} $, the previous condition becomes $h_{n,1}\rightarrow 0$ and $  {nh_{n,1}^{3+\alpha}}/\abs{\log h_{n,1}} \to \infty$. Recall the function class $\mathcal{C}_{k+\delta,M}(S)$ defined via the H\"older condition~\eqref{eq:Holder}.

\begin{proposition} \label{prop:regularity}
Let $(X_1, Y_1), (X_2, Y_2), \ldots$ be independent random vectors with common law $P$. Assume that (G\ref{cond:smoothnessdensity1}'), (G\ref{cond:kernel}) and (G\ref{cond:bandwidth1bis}'')  hold. For any $\gamma\in (0,1/2)$, we have
\begin{align*}
  \lim_{n \to \infty}
  \mathbb P \left[ 
    \big\{x\mapsto \hat F_n^-( u |x)\ :\ u \in [\gamma,1-\gamma] \big\} 
    \subset
    \mathcal C_{1+\delta_1,M_1}(S_{X}) 
  \right] 
  = 1,
\end{align*}
where $\delta_1 = \min(\alpha/2, \delta)$ and where $M_1>0$ depends only on $b_\gamma$ and $M$. 
\end{proposition}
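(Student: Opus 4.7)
The plan is to use the inverse function theorem to express $\partial_x\hat F_n^-(u|x)$ as a ratio of partial derivatives of $\hat F_n$, and then transfer the uniform rates from Proposition \ref{prop:uniformCVderivatives} into the H\"older regularity claim via an interpolation argument whose exponent is precisely $\delta_1=\min(\alpha/2,\delta)$.

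First I would localize the problem. Fix $\gamma'=\gamma/2$ and apply Proposition \ref{prop:quantiletransformunif}: on an event whose probability tends to $1$, for every $u\in[\gamma,1-\gamma]$ and every $x\in S_X$ one has $\hat F_n^-(u|x)\in I_x:=[F^-(\gamma'|x),F^-(1-\gamma'|x)]$, on which $f(y|x)\ge b_{\gamma'}$ by (G\ref{cond:smoothnessdensity1}'). By the fourth bound in Proposition \ref{prop:uniformCVderivatives} combined with (G\ref{cond:bandwidth1bis}''), $\sup|\partial_y\hat F_n-f|=o_{\mathbb P}(1)$, so with high probability $\partial_y\hat F_n(y|x)\ge b_{\gamma'}/2$ on $I_x$. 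On this event $\hat F_n(\cdot|x)$ is strictly increasing on $I_x$, so the implicit function theorem yields
\[
\partial_x\hat F_n^-(u|x)=-\frac{\partial_x\hat F_n(\hat y|x)}{\partial_y\hat F_n(\hat y|x)},\qquad \hat y=\hat F_n^-(u|x).
\]
The boundedness of $\hat F_n^-(u|x)$ follows from its closeness to $F^-$ on the compact set $\bar S_X\times[\gamma',1-\gamma']$, and combining $|\partial_x F|\le M$ from (G\ref{cond:smoothnessdensity1}') with the uniform convergence $\sup|\partial_x\hat F_n-\partial_x F|=o_{\mathbb P}(1)$ and the lower bound on the denominator yields $|\partial_x\hat F_n^-(u|x)|\le 2M/b_{\gamma'}+o_{\mathbb P}(1)$. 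In particular $x\mapsto\hat F_n^-(u|x)$ is Lipschitz with a constant depending only on $M$ and $b_\gamma$.

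The heart of the proof is the $\delta_1$-H\"older control of $\partial_x\hat F_n^-(u|\cdot)$. Writing $\partial_x\hat F_n^-(u|x)=g_n(\hat y(x),x)$ with $g_n=-N_n/D_n$, $N_n=\partial_x\hat F_n$, $D_n=\partial_y\hat F_n$, and using that $\hat y(\cdot)$ is Lipschitz and $D_n\ge b_{\gamma'}/2$, the problem reduces to showing that $N_n$ and $D_n$ are $\delta_1$-H\"older in $(y,x)$ uniformly. For $N_n$, split $N_n=\partial_x F+r_n$. The deterministic part $\partial_x F$ is Lipschitz in $(y,x)$ (since $|\partial_x^2 F|,|\partial_x\partial_y F|\le M$), hence $\delta_1$-H\"older with constant depending only on $M$. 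For the remainder, a careful accounting under (G\ref{cond:bandwidth1bis}'') of the rates in Proposition \ref{prop:uniformCVderivatives} gives $\sup|r_n|=o_{\mathbb P}(h_{n,1}^{\alpha/2})$ while $\sup|\partial_x r_n|,\sup|\partial_y r_n|=O_{\mathbb P}(h_{n,1}^{\alpha/2-1})$. Young's inequality $\min(a,b)\le a^{1-\delta_1}b^{\delta_1}$ then yields
\[
|r_n(y_1,x_1)-r_n(y_2,x_2)|\le O_{\mathbb P}\bigl(h_{n,1}^{\alpha/2-\delta_1}\bigr)\,\bigl(|y_1-y_2|+|x_1-x_2|\bigr)^{\delta_1},
\]
and the prefactor is bounded in probability as soon as $\delta_1\le\alpha/2$. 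An analogous decomposition $D_n=f+s_n$ yields $\delta_1$-H\"older regularity of $D_n$, with the restriction $\delta_1\le\delta$ entering through the bias terms $h_{n,j}^{1+\delta}$ in the rate for $\partial_y^2\hat F_n-\partial_y f$. Combining these H\"older bounds for $N_n$ and $D_n$ with the ratio and composition arguments delivers the bound on $\hat F_n^-(u|\cdot)\in\mathcal C_{1+\delta_1,M_1}(S_X)$ with $M_1$ depending only on $M$ and $b_\gamma$.

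The main obstacle is the last step: one must verify, by bookkeeping the rates in Proposition \ref{prop:uniformCVderivatives} under (G\ref{cond:bandwidth1bis}''), that the interpolation really produces a \emph{bounded} constant. The choice $\delta_1=\min(\alpha/2,\delta)$ is dictated by exactly two phenomena: the $\alpha$-slack in the bandwidth conditions is needed to absorb the blow-up $h_{n,1}^{\alpha/2-1}$ of the second $x$- and mixed partial derivatives of $\hat F_n$, which forces $\delta_1\le\alpha/2$; and the H\"older order $\delta$ of $\partial_y^2 F=\partial_y f$ limits the bias contribution in the $y$-direction interpolation for $D_n$, which forces $\delta_1\le\delta$.
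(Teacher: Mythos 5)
Your proposal follows the paper's proof essentially step for step: you differentiate the identity $\hat F_n(\hat F_n^-(u|x)|x)=u$ to express $\partial_x \hat F_n^-(u|x)$ as $-\partial_x\hat F_n/\hat f_n$ evaluated at $\hat F_n^-(u|x)$, localize via Proposition~\ref{prop:quantiletransformunif}, control the ratio uniformly via Proposition~\ref{prop:uniformCVderivatives}, and then obtain the $\delta_1$-H\"older bound on the numerator and denominator by interpolating between the supremum of $\partial_x(\hat F_n-F)$ (resp.\ $\hat f_n-f$) and the supremum of its gradient---the paper implements this interpolation as a case split at distance $h_{n,1}$, you phrase it as $\min(a,b)\le a^{\delta_1}b^{1-\delta_1}$, which is the same estimate. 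Two small imprecisions worth flagging: the intermediate rates you state, $o_{\mathbb P}(h_{n,1}^{\alpha/2})$ and $O_{\mathbb P}(h_{n,1}^{\alpha/2-1})$, do not hold for every allowed $\alpha$ (the deterministic bias terms in Proposition~\ref{prop:uniformCVderivatives} are only $O(h_{n,1})$ and $O(1)$, which beat $h_{n,1}^{\alpha/2}$ and $h_{n,1}^{\alpha/2-1}$ only when $\alpha\le 2$); the correct exponent throughout is $\delta_1=\min(\alpha/2,\delta)\le 1$, which is also what your Young's-inequality step actually uses, and the constraint $\delta_1\le\delta$ enters not so much through the $h^{1+\delta}$ bias terms (which remain negligible for any $\delta_1\le 1$) as because the paper replaces $\delta$ by $\delta_1$ in (G\ref{cond:smoothnessdensity1}) and Proposition~\ref{prop:uniformCVderivatives} so that the targeted H\"older order is attainable by the true conditional quantile curves.
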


   \begin{appendices}

\section{Proof of Theorem \ref{theorem:weakcv_highlevel}}
\label{app:proof:weakcv_highlevel}

Condition~(G\ref{cond:copula_smoothness}) implies the existence and continuity of $\dot{C}_j$ on $\{ \bu \in [0, 1]^2 : 0 < u_j < 1 \}$, for $j \in \{1, 2\}$. As a consequence, the oracle empirical process $n^{1/2} (\hat C_n^{(or)}- C )$  converges weakly in $\ell^{\infty}([0,1]^2)$ to the tight Gaussian process  $\mathbb C$ \citep{segers:2012}. The proof of Theorem~\ref{theorem:weakcv_highlevel} therefore consists of showing equation~\eqref{eq:oP1}.

We use  notation from empirical process theory. Let $P_n=n^{-1}\sum_{i=1}^n\delta_{(X_i,\bY_i)}$ denote the empirical measure. For a  function $f$ and a probability measure $Q$, write $Qf=\int f\, \diff Q$. The empirical process is
\begin{align*}
\mathbb G_n =n^{1/2}  (P_n -P).
\end{align*}
For any pair of cumulative distribution  functions $F_1$ and $F_2$  on $\reals$, put $\bm{F} (\by)= (F_{1}(y_1)  , F_{2}(y_2)) $ for  $\by = (y_1,y_2)\in \R^2$ and $\bm{F}^- (\bu)= (F_{1}^-(u_1)  , F_{2}^-(u_2)) $ for  $\bu=(u_1,u_2)\in [0,1]^2$, the generalized inverse being defined in \eqref{eq:genInv}.

Our proof follows from an application of Theorem~2.1 stated in \cite{wellner2007} and reported below; for a proof see for instance \citet[Lemma~3.3.5]{wellner1996}, noting that the conclusion of their proof is in fact stronger than what is claimed in their statement. Let $\xi_1, \xi_2, \ldots$ be independent and identically distributed random elements of a measurable space $(\mathcal{X}, \mathcal{A})$ and with common distribution equal to $P$. Let $\mathbb P$ denote the probability measure on the probability space on which the sequence $\xi_1, \xi_2, \ldots$ is defined. Let $\mathbb{G}_{\xi,n}$ be the empirical process associated to the sample $\xi_1, \ldots, \xi_n$. Let $\mathcal{E}$  and $\mathcal{U}$ be sets and let $\{ m_{u,\eta} \;:\; u \in \mathcal{U}, \, \eta \in \mathcal{E} \}$ be a collection of real-valued, measurable functions on $\mathcal{X}$.  
\begin{theorem}[Theorem~2.1 in \cite{wellner2007}]
\label{lemmawellner}
Let $\hat{\eta}_n$ be random elements in $\mathcal{E}$. Suppose there exist $\eta_0 \in \mathcal{E}$ and $\mathcal{E}_0 \subset \mathcal{E}$ such that the following three conditions hold: 
\begin{enumerate}[(i)]
\item \label{point1thwellner} $\sup_{u \in \mathcal{U} } P\big(m_{u,\hat \eta_n}-m_{u, \eta_0}\big)^2 = o_{\mathbb{P}}(1)$ as $n \to \infty$;
\item \label{point2thwellner} $\mathbb P(\hat \eta_n\in \mathcal E_0) \to 1$ as $n \to \infty$;
\item \label{point3thwellner} $\{m_{u,\eta} - m_{u,\eta_0} \ :\ u\in \mathcal{U},\ \eta \in \mathcal E_0\}$ is $P$-Donsker.
\end{enumerate}
Then it holds that
\begin{align*}
  \sup_{u \in \mathcal{U} } 
  \abs{ \mathbb{G}_{\xi,n} \left( m_{u,\hat \eta_n}-m_{u, \eta_0}\right) } 
  = o_{\mathbb{P}}(1), 
  \qquad n \to \infty.
\end{align*}
\end{theorem}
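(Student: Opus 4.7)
The statement is the standard ``asymptotic equicontinuity trick'' for evaluating an empirical process at a random argument, so my plan is to reduce it directly to the equicontinuity reformulation of the Donsker property. First I set $\mathcal{F}_0 = \{ m_{u,\eta} - m_{u,\eta_0} : u \in \mathcal{U}, \, \eta \in \mathcal{E}_0 \}$; by hypothesis~(iii) this class is $P$-Donsker. I then enlarge it to $\tilde{\mathcal{F}}_0 = \mathcal{F}_0 \cup \{0\}$. Appending the zero function does not disturb the bracketing/uniform-entropy integrals or envelopes, and the added function is trivially $P$-square-integrable with zero variance, so $\tilde{\mathcal{F}}_0$ remains $P$-Donsker.

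The Donsker property for $\tilde{\mathcal{F}}_0$ is equivalent to asymptotic equicontinuity with respect to the $L^{2}(P)$ semi-metric \citep[Theorem~2.5.2]{wellner1996}. Applied with $g = 0 \in \tilde{\mathcal{F}}_0$, this yields, for every $\varepsilon > 0$,
\begin{equation*}
  \lim_{\delta \downarrow 0} \limsup_{n \to \infty}\; \mathbb{P}^{*} \left( \sup_{f \in \mathcal{F}_0,\; \norm{f}_{L^{2}(P)} < \delta} \abs{\mathbb{G}_{\xi,n} f} > \varepsilon \right) = 0.
\end{equation*}
Fix $\varepsilon > 0$ and choose $\delta > 0$ so that the $\limsup$ above is at most $\varepsilon$. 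Let $A_n$ be the event on which $\hat{\eta}_n \in \mathcal{E}_0$ and $\sup_{u \in \mathcal{U}} P(m_{u,\hat{\eta}_n} - m_{u,\eta_0})^2 < \delta^{2}$. Hypotheses~(i) and~(ii) together give $\mathbb{P}(A_n) \to 1$.

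On $A_n$, every function $f_{u,n} := m_{u,\hat{\eta}_n} - m_{u,\eta_0}$ belongs to $\mathcal{F}_0$ and has $L^{2}(P)$-norm below $\delta$, so $\sup_{u \in \mathcal{U}} \abs{\mathbb{G}_{\xi,n} f_{u,n}}$ is dominated by the equicontinuity modulus in the display above. Combining with $\mathbb{P}(A_n^{c}) \to 0$ produces $\limsup_{n \to \infty} \mathbb{P}^{*}\{\sup_{u} \abs{\mathbb{G}_{\xi,n} f_{u,n}} > \varepsilon\} \le \varepsilon$, and letting $\varepsilon \downarrow 0$ finishes the argument. The only delicate point, and hence the ``hard part'', is measurability: the suprema over the random subclass $\{f_{u,n} : u \in \mathcal{U}\}$ are not a priori measurable, which forces the use of outer probability throughout. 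This is handled by the standard permissibility/measurability conventions under which the $P$-Donsker property is defined \citep{wellner1996}; no new probabilistic idea beyond the equicontinuity reformulation is required.
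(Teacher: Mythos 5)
Your argument is correct and is essentially the same argument used to prove the cited result in the literature. The paper itself does \emph{not} reproduce a proof of this theorem: it states it as a quoted result and points the reader to \citet[Lemma~3.3.5]{wellner1996} (remarking that the proof there actually gives the uniform-in-$u$ conclusion). Your derivation --- append the zero function to the difference class $\mathcal{F}_0$, invoke the asymptotic-equicontinuity reformulation of the Donsker property, and plug in the random subclass $\{m_{u,\hat\eta_n}-m_{u,\eta_0}\}$ on the high-probability event $A_n$ where $\hat\eta_n\in\mathcal{E}_0$ and the $L^2(P)$-norms are uniformly below the equicontinuity threshold --- is exactly the mechanism behind that lemma, so you have filled in the proof along the route the authors implicitly intend.

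Two small inaccuracies worth tidying up. First, the equicontinuity criterion that is \emph{equivalent} to the Donsker property is stated in terms of the variance semimetric $\rho_P(f,g)^2 = P(f-g)^2 - \{P(f-g)\}^2$, not the $L^2(P)$ semimetric; what you use is the one-sided implication ``Donsker $\Rightarrow$ asymptotically $\rho_P$-equicontinuous $\Rightarrow$ asymptotically $L^2(P)$-equicontinuous,'' the last step following from $\rho_P(f,g)\le \lVert f-g\rVert_{L^2(P)}$. Phrasing it as an \emph{equivalence} with the $L^2(P)$ semimetric is misleading, though the direction you actually need is correct. Second, the reference [Theorem~2.5.2] in \cite{wellner1996} is to Ossiander's bracketing CLT, not to the equicontinuity characterisation; the correct pointer is to Section~2.1 (or Section~1.5) of that book. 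Neither slip affects the validity of the proof.
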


The empirical process notation allows us to write
\begin{align*}
  \hat{C}_n(\bu) 
  &= 
  P_n \ind_{\{  \hat {\bm{F}}_{n}\, \leq\, \hat{\bm{G}}^-_{n} (\bu) \}}, &
  \hat C^{(or)}_n(\bu)
  &=
  P_n \ind_{\{   \bm{F}\,\leq\, \hat{\bm{G}}^{(or)-}_{n} (\bu) \}}.
\end{align*}
To establish (\ref{eq:oP1}), we rely on the decomposition
\begin{align*}
\nonumber
&n^{1/2}  \bigl\{  \hat C_n(\bu) - \hat C^{(or)}_n(\bu)\bigr\}\\
\nonumber
&= \mathbb G_n\left\{ \ind_{\{  \hat {\bm{F}}_{n}\, \leq\, \hat{\bm{G}}^-_{n} (\bu) \}} -\ind_{\{  \bm{F}\, \leq \,\hat{\bm{G}}^{(or)-}_{n} (\bu) \}} \right\}+
n^{1/2}  P\left\{ \ind_{\{   \hat{\bm{F}}_{n}\, \leq\, \hat{\bm{G}}^-_{n} (\bu) \}} -\ind_{\{   \bm{F}\,\leq\, \hat{\bm{G}}^{(or)-}_{n} (\bu) \}} \right\} \\
&=\hat A_{n,1}(\bu)+\hat A_{n,2}(\bu).
\end{align*}
Let $\gamma\in (0,1/2)$. The proof consists in showing that the empirical process term $\hat A_{n,1}(\bu)$ goes to zero, uniformly over $\bu\in [\gamma,1-\gamma]^2$, in probability (first step) and that the bias term $\hat A_{n,2}(\bu)$ goes to zero, uniformly over $\bu\in [\gamma,1-\gamma]^2$, in probability (second step). The simplifying assumption~\eqref{assimplifying} is crucial for treating the bias term in the second step. But before executing this program, it is useful to obtain some results on $\hat F_{n,j}$ and $\hat G_{n,j}$, $j=1,2$ (preliminary steps).

\paragraph{Preliminary step:} We establish some preliminary results on $\hat F_{n,j}$ and $\hat G_{n,j}$, $j=1,2$, that we list as facts in the following.

\begin{fact}\label{prel:consistency_inverse}
For any $j = 1,2$,
\begin{align*}
\sup_{u_j\in [\gamma,1-\gamma],\, x\in S_X }   \abs{ F_j\big(\hat  F_{n,j} ^- ( u_j |x )|x\big)  - u_j } = o_{\mathbb P} (n^{-1/4}).
\end{align*}
\end{fact}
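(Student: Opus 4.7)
The strategy is the standard inversion argument: whenever $\hat F_{n,j}(\cdot|x)$ is continuous and strictly increasing on an interval containing $\hat F_{n,j}^-(u|x)$, one has the exact identity $\hat F_{n,j}(\hat F_{n,j}^-(u|x)|x) = u$, and then
\[
  \abs{F_j(\hat F_{n,j}^-(u|x)|x) - u}
  = \abs{F_j(\hat F_{n,j}^-(u|x)|x) - \hat F_{n,j}(\hat F_{n,j}^-(u|x)|x)}
  \le \sup_{y \in \R} \abs{F_j(y|x) - \hat F_{n,j}(y|x)},
\]
so that the conclusion follows directly from (H\ref{cond:high_level_consistency}). The only thing that needs care is arranging that, with probability going to $1$ and uniformly in $(u,x) \in [\gamma,1-\gamma]\times S_X$, the value $\hat F_{n,j}^-(u|x)$ does fall in the interior of the interval on which (H\ref{cond:high_level_inverse_inequality}) grants continuity and strict monotonicity.

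\textbf{Construction of a buffer.} Fix some $\gamma' \in (0,\gamma)$ and set $J_x = [F_j^-(\gamma'|x), F_j^-(1-\gamma'|x)]$. By (H\ref{cond:high_level_inverse_inequality}) applied with $\gamma'$ in place of $\gamma$, the event $\Omega_n^{(1)}$ on which $\hat F_{n,j}(\cdot|x)$ is continuous on $\R$ and strictly increasing on $J_x$ for every $x \in S_X$ has probability tending to $1$. On the other hand, (H\ref{cond:high_level_consistency}) yields
\[
  \sup_{x\in S_X} \bigl\lvert \hat F_{n,j}(F_j^-(\gamma'|x) \mid x) - \gamma' \bigr\rvert
  + \sup_{x\in S_X} \bigl\lvert \hat F_{n,j}(F_j^-(1-\gamma'|x) \mid x) - (1-\gamma') \bigr\rvert
  = o_{\mathbb P}(n^{-1/4}),
\]
so the event $\Omega_n^{(2)}$ on which $\hat F_{n,j}(F_j^-(\gamma'|x)|x) < \gamma$ and $\hat F_{n,j}(F_j^-(1-\gamma'|x)|x) > 1-\gamma$ hold for every $x\in S_X$ also has probability tending to $1$.

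\textbf{Identification and conclusion.} On $\Omega_n^{(1)}\cap \Omega_n^{(2)}$, for every $u\in[\gamma,1-\gamma]$ and every $x\in S_X$, the intermediate value theorem together with strict monotonicity produces a unique $y_{u,x}\in J_x$ with $\hat F_{n,j}(y_{u,x}|x) = u$. By the definition of the generalized inverse~\eqref{eq:genInv} this $y_{u,x}$ equals $\hat F_{n,j}^-(u|x)$, and therefore $\hat F_{n,j}(\hat F_{n,j}^-(u|x)|x) = u$. The display at the top of this plan then gives
\[
  \sup_{u\in[\gamma,1-\gamma],\, x\in S_X}
    \bigl\lvert F_j(\hat F_{n,j}^-(u|x)|x) - u \bigr\rvert
  \le \sup_{y\in\R,\, x\in S_X}\bigl\lvert F_j(y|x) - \hat F_{n,j}(y|x) \bigr\rvert
  = o_{\mathbb P}(n^{-1/4}),
\]
again by (H\ref{cond:high_level_consistency}). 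Since $\mathbb P(\Omega_n^{(1)}\cap\Omega_n^{(2)}) \to 1$, the claimed bound holds in outer probability on the whole space.

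\textbf{Main obstacle.} There is no real technical difficulty: the argument is essentially free once (H\ref{cond:high_level_inverse_inequality}) and (H\ref{cond:high_level_consistency}) are granted. The only point that one must be careful about is not to apply the identity $\hat F\circ\hat F^- = \mathrm{id}$ at the endpoints $u=\gamma$ or $u=1-\gamma$, where the preimage could in principle leave $J_x$; this is exactly what the buffer $\gamma'<\gamma$ prevents.
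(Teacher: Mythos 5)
Your approach is essentially the same as the paper's: establish the inversion identity $\hat F_{n,j}\bigl(\hat F_{n,j}^-(u_j|x)|x\bigr) = u_j$ with probability tending to one, and then bound
$\bigl\lvert F_j(\hat F_{n,j}^-(u_j|x)|x) - u_j \bigr\rvert \le \sup_{y,x}\bigl\lvert F_j(y|x) - \hat F_{n,j}(y|x)\bigr\rvert = o_{\mathbb P}(n^{-1/4})$
by (H\ref{cond:high_level_consistency}). The paper obtains the identity by invoking (H\ref{cond:high_level_inverse_inequality}) for continuity together with Lemma~\ref{lemma:FoF^-}; you obtain it via the buffer $\gamma'<\gamma$ and the intermediate value theorem, which is in the same spirit.

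There is, however, a small gap in your identification step. From the intermediate value theorem and strict monotonicity on $J_x$ you obtain a unique $y_{u,x}\in J_x$ with $\hat F_{n,j}(y_{u,x}|x)=u$, and then assert that ``by the definition of the generalized inverse this $y_{u,x}$ equals $\hat F_{n,j}^-(u|x)$.'' But $\hat F_{n,j}^-(u|x)$ is the infimum of $\{y \in \R : \hat F_{n,j}(y|x)\ge u\}$ over all of $\R$, and the events $\Omega_n^{(1)}$ and $\Omega_n^{(2)}$ as you define them do not exclude the possibility that $\hat F_{n,j}(y|x)\ge u$ at some $y < F_j^-(\gamma'|x)$, i.e.\ below $J_x$, where (H\ref{cond:high_level_inverse_inequality}) gives continuity but no monotonicity. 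If that occurred one would have $\hat F_{n,j}^-(u|x) < y_{u,x}$, or even $\hat F_{n,j}^-(u|x) = -\infty$, and the subsequent chain of inequalities would fail. The fix is immediate: strengthen $\Omega_n^{(2)}$ by using (H\ref{cond:high_level_consistency}) globally, i.e.\ work on the event $\{\sup_{x,y}\lvert \hat F_{n,j}(y|x)-F_j(y|x)\rvert < \gamma-\gamma'\}$, whose probability also tends to one; on it, $y < F_j^-(\gamma'|x)$ forces $\hat F_{n,j}(y|x) < \gamma'+(\gamma-\gamma') = \gamma \le u$, so the infimum set is contained in $[F_j^-(\gamma'|x), +\infty)$ and the identification goes through. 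With that addition the remainder of your argument is correct.
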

\begin{proof}
Because of (H\ref{cond:high_level_inverse_inequality}), invoking Lemma \ref{lemma:FoF^-}, it holds that with probability going to one, 
\begin{align*}
u_j= \hat  F_{nj}\big(\hat  F_{n,j}^- ( u_j|x)|x\big),
\end{align*} 
for each $x\in S_X$ and $u_j\in [\gamma,1-\gamma]$. It follows that
\begin{align*}
 \abs{ F_j\big(\hat  F_{n,j} ^- ( u_j|x )|x\big)  - u_j }
& = \abs{ F_j\big(\hat  F_{n,j} ^- (u_j|x )|x\big)  - \hat F_{n,j}\big(\hat  F_{n,j} ^- ( u_j|x )|x\big)  }\\
&\leq  \sup_{x\in S_X, \, y_j\in \R}  \abs{ F_j(y_j|x)  - \hat F_{n,j}(y_j|x)  },
\end{align*} 
which is $o_{\mathbb P} (n^{-1/4})$ by (H\ref{cond:high_level_consistency}).
\end{proof}

A consequence of Fact \ref{prel:consistency_inverse} is that
\begin{align*}
&\inf_{x\in S_X,\, u_j\in [\gamma,1-\gamma]} F_j\big(\hat  F_{n,j} ^- ( u|x )|x\big) =o_{\mathbb P}(1) +\gamma,\\
&\sup_{x\in S_X,\, u_j \in [\gamma,1-\gamma]} F_j\big(\hat  F_{n,j} ^- ( u|x )|x\big) =o_{\mathbb P}(1) +1-\gamma.
\end{align*}
Hence, the sequence of events 
\begin{align*}
E_{n,j,1} = \{\forall x\in S_X,\, \forall u_j\in [\gamma,1-\gamma]\quad : \quad  \gamma /2\leq F_j\big(\hat  F_{n,j} ^- ( u_j|x )|x\big)\leq 1-\gamma/2  \},
\end{align*}
has probability going to one. From (H\ref{cond:high_level_consistency}) we have that, with probability going to $1$, for any $x\in S_X$ and $y_j\geq F_j^{-}(1-\gamma/2|x) $, it holds that $\hat F_{n,j}(y_j|x) \geq 1-\gamma$. Hence the sequence of events
\begin{align*}
E_{n,j,2} = \{ \forall x\in S_X\quad :\quad   \inf_ {    y_j\geq F_j^{-}(1-\gamma/2|x) } \hat F_{n,j}(y_j|x) \geq 1-\gamma  \},
\end{align*}
has probability going to one as well.

\begin{fact}\label{prel:equivalence_inverse}
On a sequence of events whose probabilities tend to one, it holds that for every $u_j\in [\gamma,1-\gamma]$ and every $(y_j,x)\in \R\times S_X$,
\begin{align*}
\hat F_{n,j}(y_j|x) \,\leq\, u_j\
\Leftrightarrow \ 
y_j \,\leq\, \hat F_{n,j}^- (u_j |x).
\end{align*}
\end{fact}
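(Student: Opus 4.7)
The plan is to construct a single event $E_n$, with $\mathbb{P}(E_n)\to 1$, on which $\hat F_{n,j}(\point|x)$ is strictly increasing on an interval containing $\hat F_{n,j}^-(u_j|x)$ strictly in its interior, and on which $\hat F_{n,j}(y_j|x)$ is strictly separated from $u_j$ once $y_j$ leaves that interval, uniformly in $(u_j,x)\in[\gamma,1-\gamma]\times S_X$. The equivalence will then follow by routine monotonicity considerations, together with the identity $\hat F_{n,j}(\hat F_{n,j}^-(u_j|x)|x)=u_j$ already exploited in Fact~\ref{prel:consistency_inverse}.

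Concretely, I would apply (H\ref{cond:high_level_inverse_inequality}) with $\gamma/4$ in place of $\gamma$, producing an event of probability tending to one on which $\hat F_{n,j}(\point|x)$ is continuous on $\R$ and strictly increasing on $I_x := [F_j^-(\gamma/4|x),\, F_j^-(1-\gamma/4|x)]$ for every $x\in S_X$. By (H\ref{cond:high_level_consistency}) we may further assume that $\sup_{x,y_j}|\hat F_{n,j}(y_j|x)-F_j(y_j|x)|<\gamma/4$, which, combined with the monotonicity of $F_j(\point|x)$, yields
\[
  \hat F_{n,j}(y_j|x) < \gamma/2 \text{ whenever } y_j < F_j^-(\gamma/4|x),
  \qquad
  \hat F_{n,j}(y_j|x) > 1-\gamma/2 \text{ whenever } y_j > F_j^-(1-\gamma/4|x).
\]
On $E_{n,j,1}$ we have $\hat F_{n,j}^-(u_j|x)\in[F_j^-(\gamma/2|x),F_j^-(1-\gamma/2|x)]\subset\operatorname{int}(I_x)$ for every $u_j\in[\gamma,1-\gamma]$, the strict inclusion being a consequence of the strict monotonicity of $F_j(\point|x)$ coming from the positive-density lower bound in (G\ref{cond:smoothnessdensity1}). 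Let $E_n$ be the intersection of these events together with the one from Fact~\ref{prel:consistency_inverse}.

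On $E_n$, the identity $\hat F_{n,j}(\hat F_{n,j}^-(u_j|x)|x)=u_j$ holds by Lemma~\ref{lemma:FoF^-}, exactly as in the proof of Fact~\ref{prel:consistency_inverse}. For $(\Leftarrow)$, assume $y_j \le \hat F_{n,j}^-(u_j|x)$: either $y_j < F_j^-(\gamma/4|x)$, giving $\hat F_{n,j}(y_j|x) < \gamma/2 \le u_j$ directly; or $y_j\in I_x$, in which case strict monotonicity together with the identity above gives $\hat F_{n,j}(y_j|x) \le u_j$. For $(\Rightarrow)$, suppose by contradiction that $\hat F_{n,j}(y_j|x)\le u_j$ while $y_j > \hat F_{n,j}^-(u_j|x)$. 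If $y_j > F_j^-(1-\gamma/4|x)$, then $\hat F_{n,j}(y_j|x) > 1-\gamma/2 > u_j$; otherwise $y_j\in I_x$, both $\hat F_{n,j}^-(u_j|x)$ and $y_j$ lie in $I_x$, and strict monotonicity yields $\hat F_{n,j}(y_j|x) > \hat F_{n,j}(\hat F_{n,j}^-(u_j|x)|x) = u_j$; either way, a contradiction.

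The main subtlety, and the reason for introducing the fractions $\gamma/4$ and $\gamma/2$ with a margin between them, is to guarantee simultaneously that (i) $\hat F_{n,j}^-(u_j|x)$ sits strictly inside the strict-monotonicity interval and (ii) $\hat F_{n,j}(y_j|x)$ is strictly separated from $u_j$ as soon as $y_j$ leaves that interval, uniformly in $(u_j,x)$; no monotonicity of $\hat F_{n,j}(\point|x)$ is available outside $I_x$, so this uniform separation is essential. Once these margins are in place, the equivalence reduces to the two elementary case analyses sketched above.
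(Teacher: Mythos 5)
Your proof is correct and takes essentially the same approach as the paper: strict monotonicity from (H\ref{cond:high_level_inverse_inequality}) on an enlarged interval, uniform consistency from (H\ref{cond:high_level_consistency}) to control $\hat F_{n,j}(\cdot|x)$ once $y_j$ leaves that interval, the event $E_{n,j,1}$ to locate $\hat F_{n,j}^-$, and a two-case argument for $\Rightarrow$, differing only in the choice of constants ($\gamma/4$, $\gamma/2$ versus the paper's $\gamma/2$, $\gamma$). The one cosmetic difference is that the paper disposes of the $\Leftarrow$ direction immediately from the infimum definition of the generalized inverse together with Lemma~\ref{lemma:FoF^-}, whereas your case split there, while valid, is more elaborate than necessary.
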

\begin{proof}
The sense $\Leftarrow$ is an easy consequence of (H\ref{cond:high_level_inverse_inequality}): because of the continuity of $y_j\mapsto \hat F_{n,j} (y_j|x) $ we can apply Lemma \ref{lemma:FoF^-} to obtain the implication. The converse direction ``$\Rightarrow$'' requires the fact that  $\hat{F}_{n,j}(\,\cdot\,|x)$ is strictly increasing on $[F_{j}^{-}(\gamma|x),F_{j}^{-}(1-\gamma|x)]$, which is given by (H\ref{cond:high_level_inverse_inequality}). Assume  that $u_j\in [\gamma,1-\gamma]$ and $(y_j,x)\in \R\times S_X$ are such that $\hat F_{n,j}^-( u_j|x)<y _j $. Consider two cases, according to whether $y_j$ is smaller than $F_{j}^-(1-\gamma/2|x)$ or not.
\begin{compactitem}
\item On the one hand, suppose that  $ y_j<F_{j}^-(1-\gamma/2|x)   $. We have, under $E_{n,j,1}$, that $ F_j^{-}(\gamma /2|x) \leq \hat  F_{n,j} ^- ( u_j|x )$. Hence we get the chain of inequalities
\begin{align*}
F_{j}^-(\gamma/2|x) \leq \hat F_{n,j}^-( u_j|x)<y _j<F_{j}^-(1-\gamma/2|x).
\end{align*}
Using the monotonicity of $\hat F_{n,j}(\cdot|x)$ on $[F_{j}^{-}(\gamma/2|x),F_{j}^{-}(1-\gamma/2|x)]$ and Lemma~\ref{lemma:FoF^-}, we find that $u_j< \hat F_{n,j}(y_j|x)$.
\item On the other hand, suppose that  $F_{j}^-(1-\gamma/2|x) \leq  y _j $, or equivalently that $ 1-\gamma/2 \leq F_{j}(y_j|x)$. Under $E_{n,j,2}$, it then holds that $u_j\leq  1-\gamma < \hat F_{n,j}(y_j|x)$.
\end{compactitem}
\end{proof}

\begin{fact} \label{prel:unif_conv_G}
We have
\begin{align*}
\sup_{u_j\in[\gamma,1-\gamma] } \abs{\hat G_{n,j} (u_j) - \hat G_{n,j}^{(or)} (u_j)}  = o_{\mathbb P} (n^{-1/4}).
\end{align*}
\end{fact}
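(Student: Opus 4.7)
The plan is to dominate the indicator difference pointwise using the uniform consistency of $\hat F_{n,j}$, then to exploit that by \eqref{factuniformdist} the oracle observations $U_{ij} := F_j(Y_{ij}|X_i)$ are iid uniform on $[0,1]$.

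Set $\varepsilon_n := \sup_{x \in S_X,\, y_j \in \R} |\hat F_{n,j}(y_j|x) - F_j(y_j|x)|$, which equals $o_{\mathbb P}(n^{-1/4})$ by (H\ref{cond:high_level_consistency}). If $U_{ij} > u_j + \varepsilon_n$ then $\hat F_{n,j}(Y_{ij}|X_i) \ge U_{ij} - \varepsilon_n > u_j$, and if $U_{ij} < u_j - \varepsilon_n$ then $\hat F_{n,j}(Y_{ij}|X_i) \le U_{ij} + \varepsilon_n < u_j$. Hence the indicators $\ind_{\{\hat F_{n,j}(Y_{ij}|X_i) \le u_j\}}$ and $\ind_{\{U_{ij} \le u_j\}}$ coincide on the event $\{|U_{ij} - u_j| > \varepsilon_n\}$. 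Averaging over $i$ gives, for every $u_j$,
\begin{align*}
|\hat G_{n,j}(u_j) - \hat G_{n,j}^{(or)}(u_j)|
\le \frac{1}{n}\sum_{i=1}^n \ind_{\{|U_{ij} - u_j| \le \varepsilon_n\}}
\le 2\varepsilon_n + 2 \sup_{u \in [0,1]} |\hat G_{n,j}^{(or)}(u) - u|,
\end{align*}
where the second step adds and subtracts the uniform distribution function $u$ at the endpoints $u_j \pm \varepsilon_n$ (up to a negligible $1/n$ correction coming from jumps).

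By the classical Donsker theorem for iid uniform samples, $\sup_{u} |\hat G_{n,j}^{(or)}(u) - u| = O_{\mathbb P}(n^{-1/2})$. Taking supremum over $u_j \in [\gamma, 1-\gamma]$ in the above display therefore yields the uniform bound $2\varepsilon_n + O_{\mathbb P}(n^{-1/2}) = o_{\mathbb P}(n^{-1/4}) + O_{\mathbb P}(n^{-1/2}) = o_{\mathbb P}(n^{-1/4})$. No refined oscillation modulus for the uniform empirical process is required, since the crude bound $O_{\mathbb P}(n^{-1/2})$ already beats the target rate; the sole nontrivial ingredient is the pointwise indicator comparison afforded by (H\ref{cond:high_level_consistency}).
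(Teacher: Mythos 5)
Your proof is correct, and it takes a genuinely more elementary route than the paper. The paper rewrites the indicator $\ind_{\{\hat F_{n,j}(Y_{ij}|X_i)\le u_j\}}$ as $\ind_{\{Y_{ij}\le \hat F_{n,j}^-(u_j|X_i)\}}$ (via (H\ref{cond:high_level_inverse_inequality}) and Fact~\ref{prel:equivalence_inverse}), splits $\hat G_{n,j}-\hat G_{n,j}^{(or)}$ into an empirical-process term and a bias term, kills the former at rate $o_{\mathbb P}(n^{-1/2})$ with Theorem~\ref{lemmawellner} (requiring the verification of a Donsker condition via (H\ref{cond:high_level_donsker}) and the inclusion \eqref{inclusin:smouthness}), and handles the latter with Fact~\ref{prel:consistency_inverse}. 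You instead bound the indicator difference pointwise: the two indicators can only disagree when $|F_j(Y_{ij}|X_i) - u_j|\le \varepsilon_n$, so the discrepancy is at most the mass that the oracle empirical measure puts on a shrinking interval of half-width $\varepsilon_n$, and this is controlled directly by $2\varepsilon_n + 2\sup_u|\hat G_{n,j}^{(or)}(u)-u|$ plus an $O(1/n)$ atom correction, with the second term $O_{\mathbb P}(n^{-1/2})$ by DKW/Donsker for the iid uniform $F_j(Y_{ij}|X_i)$. Your argument uses only (H\ref{cond:high_level_consistency}) and \eqref{factuniformdist}, not (H\ref{cond:high_level_inverse_inequality}) or (H\ref{cond:high_level_donsker}), and avoids the empirical-process machinery entirely; it even gives the supremum over all of $[0,1]$. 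What the paper's route buys is an asymptotic linearization $\hat G_{n,j}(u_j)-\hat G_{n,j}^{(or)}(u_j)=\int\{F_j(\hat F_{n,j}^-(u_j|x)|x)-u_j\}f_X(x)\,\diff x + o_{\mathbb P}(n^{-1/2})$, which is stronger than the stated Fact but is not actually needed here, and it reuses the same Wellner/Donsker framework deployed in the main argument, keeping the proof structure uniform. Since the target rate is only $o_{\mathbb P}(n^{-1/4})$, your crude-but-direct bound suffices.
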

\begin{proof}
From Fact \ref{prel:equivalence_inverse}, it holds that, on a sequence of events whose probabilities tend to one, with a slight abuse of notation,
\begin{align*}
&\hat{G}_{n,j} (u_j)-\hat{G}_{n,j}^{(or)} (u_j) \\
&= n^{-1/2} \mathbb G_n\left\{ \ind_{\{ Y \,\leq\, \hat  F_{n,j}^-(u_j|X) \}} -\ind_{\{   Y \,\leq\,  F_{j}^{-} ( u_j|X) \}} \right\}+
P\left \{ \ind_{\{ \hat  F_{n,j} \,\leq\, u_j\}} -\ind_{\{   F_j \,\leq\, u_j \}}\right\} .
\end{align*}
We apply Theorem~\ref{lemmawellner} with $\xi_i = ( Y_{i,j}, X_i )$, $\mathcal{X} = \reals \times S_X$, $\mathcal{U} = [\gamma, 1-\gamma]$ and $\mathcal{E} $  the space of measurable functions valued in $\R$ and defined on $[\gamma,1-\gamma]  \times S_X$. Moreover, the quantities $\eta_0$, $\hat \eta_n$, and the map $m_{u_j,\eta}$ are given by, for every $u_j \in [\gamma,1-\gamma]$ and $x\in S_X$,
\begin{align*}
  \eta_0(u_j,x)& = {F}_j^-(u_j|x), \\
  \hat \eta_n(u_j,x)&=\hat F_{n,j}^-(u_j|x),\\
  m_{u_j,\eta}(y,x ) &=  \ind_{\{y\,\leq\, {\eta}(u_j  ,x)\}}  .
 \end{align*}  
Finally, the space $\mathcal{E}_0$ is the collection of those elements  $\eta$ in $\mathcal{E}$  such that
\begin{align*}
  \{ x\mapsto \eta(u_j,x) \, :\, u_j \in [\gamma,1-\gamma] \} 
  &\subset
   \mathcal C_{1+\delta_1,M_1}(S_{X}) .
\end{align*}
The verification of the three assumptions in Theorem~\ref{lemmawellner} is as follows:
\begin{compactitem}
\item
First, we show point (i). Recall that if  the random variable $U$ is uniformly distributed on $(0, 1)$, then $\operatorname{E}(\ind_{\{U\leq u_1 \}}-\ind_{\{U\leq u_2 \}})^2 = \abs{u_1-u_2} $. We have
\begin{align*}
&  \int
    \abs{
      \ind_{\{ y \,\leq\, \hat  F_{n,j}^-(u_j|x) \}} 
      - 
      \ind_{\{ y \,\leq\, F_j^{-} ( u_j|x) \}}
    }^2
  f_{X,Y}(x, y) \, \diff (x, y)
\\
  &= 
  \int  \abs{ F\bigl(\hat F_{n,j}^-(u_j|x)|x\bigr) - u_j } \, f_X(x)\,  \diff x \\
  &\leq 
  \sup_{u_j\in[\gamma,1-\gamma],\, x\in S_X } 
  \abs{ F\big(\hat F_{n,j}^-(u_j|x)|x\big) - u_j },
\end{align*} 
which, by Fact \ref{prel:consistency_inverse}, tends to zero in probability.
\item
Second, point (ii) is directly obtained invoking (H\ref{cond:high_level_donsker}).
\item
Third, point (iii) follows from the existence of $\delta_2>0 $ and  $M_2>0$ such that 
\begin{align}\label{inclusin:smouthness}
\{
    x \mapsto F_{j}^-( u_j|x)\ :\ 
    u_j \in [\gamma,1-\gamma] 
 \}\subset 
 \mathcal C_{1+\delta_2,M_2}(S_{X}).
\end{align}
The inclusion is indeed implied by the formula
\begin{align*}
  \partial_x  F^-_{j} (u_j  |x) 
  = -\left.\frac{\partial _x  F_{j} (y_j |x) }{ f_{j}  (y_j|x) }\right|_{y =  F_{j}^- (u_j  |x) },
\end{align*}
which, by (G\ref{cond:smoothnessdensity1}), is bounded by $M/b_\gamma$. Then, based on (G\ref{cond:smoothnessdensity1}), we easily obtain that the function $x \mapsto \partial_x F_{j}^-(u_j|x)$ is $\delta$-H\"older with H\"older constant depending only on $b_\gamma$ and $M$ (for more details, the reader is invited to read the proof of Proposition \ref{prop:regularity}). It remains to note that under (G\ref{cond:smoothnessdensity1}), the set $S_X$ is bounded and convex, implying that for any $\delta>0$ and $M>0$, the class of subgraphs of $\mathcal C_{1+\delta,M}(S_{X})$ is Donsker \citep[Corollary 2.7.5]{wellner1996}. As the difference of two Donsker classes remains Donsker \citep[Example~2.10.7]{wellner1996}, we obtain point (iii).
\end{compactitem}
Consequently, we have shown that 
\begin{align*}
\hat{G}_{n,j} (u_j)-\hat{G}_{n,j}^{(or)} (u_j) =
 \int \left\{ F\big( \hat  F_{n,j}^- ( u_j|x)|x\big) -  u_j\right \}\,  f_X(x)\, \diff x  +o_{\mathbb P}(n^{-1/2}).
\end{align*}
Conclude invoking Fact \ref{prel:consistency_inverse}. 
\end{proof}

\begin{fact}\label{prel:unif_conv_G^-1}
We have
\begin{align*}
\sup_{u_j\in[\gamma,1-\gamma] } \abs{\hat G_{n,j}^- (u_j) - \hat G_{n,j}^{(or)-} (u_j)} = o_{\mathbb P} (n^{-1/4}).
\end{align*}
\end{fact}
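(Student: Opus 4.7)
The plan is to deduce this from Fact~\ref{prel:unif_conv_G} by a routine sandwich argument transferring uniform control of two distribution functions to uniform control of their generalized inverses. Write $\eta_n = \sup_{v\in\R} \abs{\hat G_{n,j}(v) - \hat G_{n,j}^{(or)}(v)}$; Fact~\ref{prel:unif_conv_G} gives $\eta_n = o_{\mathbb P}(n^{-1/4})$.

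The starting point is the elementary pointwise inequality, valid for any two univariate distribution functions $F,G$ with $\sup_t \abs{F(t)-G(t)} \le \epsilon$ and any $u \in (0,1)$:
\[
G^-(u-\epsilon) \;\le\; F^-(u) \;\le\; G^-(u+\epsilon),
\]
which follows immediately from the definition $F^-(u) = \inf\{t : F(t) \ge u\}$ and the inclusion $\{t : G(t) \ge u+\epsilon\} \subset \{t : F(t) \ge u\} \subset \{t : G(t) \ge u-\epsilon\}$. Applied with $F = \hat G_{n,j}$, $G = \hat G_{n,j}^{(or)}$ and $\epsilon = \eta_n$, this yields
\[
\abs{\hat G_{n,j}^-(u) - \hat G_{n,j}^{(or)-}(u)} \;\le\; \hat G_{n,j}^{(or)-}(u+\eta_n) - \hat G_{n,j}^{(or)-}(u-\eta_n).
\]

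Next I would control the modulus of continuity of $\hat G_{n,j}^{(or)-}$. By \eqref{factuniformdist}, the variables $F_j(Y_{ij}|X_i)$, $i=1,\ldots,n$, are i.i.d.\ uniform on $[0,1]$, so $\hat G_{n,j}^{(or)}$ is the empirical distribution function of $n$ i.i.d.\ uniform random variables. The Dvoretzky--Kiefer--Wolfowitz inequality gives $\sup_{v\in[0,1]}\abs{\hat G_{n,j}^{(or)}(v) - v} = O_{\mathbb P}(n^{-1/2})$, and a second application of the sandwich inequality (with the identity playing the role of $G$) yields $\sup_{v\in[\gamma/2,1-\gamma/2]}\abs{\hat G_{n,j}^{(or)-}(v) - v} = O_{\mathbb P}(n^{-1/2})$. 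Since $u \in [\gamma,1-\gamma]$ and $\eta_n \to 0$ in probability, both $u\pm\eta_n$ lie in $[\gamma/2,1-\gamma/2]$ with probability tending to one, so the triangle inequality bounds the right-hand side of the previous display by
\[
2\eta_n + 2\sup_{v\in[\gamma/2,1-\gamma/2]} \abs{\hat G_{n,j}^{(or)-}(v)-v} = o_{\mathbb P}(n^{-1/4}) + O_{\mathbb P}(n^{-1/2}) = o_{\mathbb P}(n^{-1/4}),
\]
uniformly in $u \in [\gamma,1-\gamma]$, which is the required conclusion.

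No genuinely hard step arises here: the essential work has already been carried out in Fact~\ref{prel:unif_conv_G}. The only care needed is the bookkeeping to ensure that $u\pm\eta_n$ stays inside $[\gamma/2,1-\gamma/2]$ so that the $O_{\mathbb P}(n^{-1/2})$ control of the oracle quantile function applies; this is automatic once $\eta_n < \gamma/2$, which occurs with probability tending to one.
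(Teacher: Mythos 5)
Your argument is correct and is essentially the paper's own proof: both apply the bracketing inequality of Lemma~\ref{lemma:bracketingquantile} to pass from $\hat G_{n,j}^-$ to $\hat G_{n,j}^{(or)-}$, then bound the resulting oracle-quantile increment by $2\eta_n + 2\sup_u\lvert \hat G_{n,j}^{(or)-}(u)-u\rvert = o_{\mathbb P}(n^{-1/4}) + O_{\mathbb P}(n^{-1/2})$. The only difference is how the $O_{\mathbb P}(n^{-1/2})$ control of the oracle quantile function is obtained --- the paper cites Lemma~\ref{lem:Vervaat:random}, while you use the DKW inequality plus a second sandwich --- and these are interchangeable here; note also that Fact~\ref{prel:unif_conv_G} formally gives the supremum only over $[\gamma,1-\gamma]$ rather than over all of $\R$ as your $\eta_n$ is defined, a harmless informality shared by the paper's own application of Lemma~\ref{lemma:bracketingquantile}.
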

\begin{proof}
By Fact \ref{prel:unif_conv_G}, the supremum distance $\epsilon_n = \sup_{u_j \in [\gamma,1-\gamma]} \lvert \hat{G}_{n,j}(u_j)-\hat{G}_{n,j}^{(or)} (u_j) \rvert$ converges to zero in probability. We work on the event $\{ \epsilon_n < \gamma \}$, the probability of which tends to $1$. By Lemma~\ref{lemma:bracketingquantile}, we have, for $u_j \in [\gamma, 1-\gamma]$,
\begin{multline*}
  \abs{ \hat{G}_{n,j}^{-} (u_j)-   \hat{G}_{n,j}^{(or)-}(u_j)}\\
  \leq 
  \abs{ \hat{G}_{n,j}^{(or)-} ((u_j-\epsilon_n)\vee 0) -\hat{G}_{n,j}^{(or)-} (u_j)}
  \vee  
  \abs{ \hat{G}_{n,j}^{(or)-} ((u_j+\epsilon_n)\wedge 1) -\hat{G}_{n,j}^{(or)-} (u_j)},
\end{multline*}
with $a \vee b = \max(a, b)$.  In terms of $\mathbb W_n (u_j)= \sqrt n \{ \hat{G}_{n,j}^{(or)-} (u_j) - u_j \}$,  we have
\begin{multline*}
  \abs{ \hat{G}_{n,j}^{(or)-} ((u_j-\epsilon_n)\vee 0) -\hat{G}_{n,j}^{(or)-} (u_j) }
  \vee  
  \abs{ \hat{G}_{n,j}^{(or)-} ((u_j+\epsilon_n)\wedge 1) -\hat{G}_{n,j}^{(or)-} (u_j) } 
  \\
  \leq \epsilon_n + 2\sup_{u_j\in [0,1] } |\mathbb W_n (u_j)|.
\end{multline*}
From Fact \ref{prel:unif_conv_G} and Lemma~\ref{lem:Vervaat:random}, we get the desired rate $o_{\mathbb P}( n^{-1/4})$. 
\end{proof}

For $u_j\in [\gamma,1-\gamma]$, $x \in S_X$, and $j \in \{1, 2\}$, define
\begin{align}\label{def:deltaj}
\hat \Delta_{n,j}(u_j|x)= {F}_{j} \bigl(\hat  {{F}}_{n,j}^-\bigl( \hat{G}^-_{n,j} (u_j)|\, x\bigr)|\, x\bigr)-\hat{G}^{(or)-}_{n,j} (u_j).
\end{align}

\begin{fact}\label{prel:unif_conv_Delta}
We have
\begin{align*}
\sup_{u_j\in [\gamma, 1-\gamma],\, x\in S_X } |\hat \Delta_{n,j}(u_j|x)| & = o_{\mathbb P} (n^{-1/4}).
\end{align*}
\end{fact}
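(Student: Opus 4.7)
The plan is to decompose $\hat\Delta_{n,j}(u_j|x)$ by adding and subtracting $\hat G_{n,j}^-(u_j)$. Writing $v_j = v_j(u_j) := \hat G_{n,j}^-(u_j)$, one obtains
\begin{align*}
\hat\Delta_{n,j}(u_j|x)
= \bigl\{ F_j\bigl(\hat F_{n,j}^-(v_j|x)|x\bigr) - v_j \bigr\} + \bigl\{ \hat G_{n,j}^-(u_j) - \hat G_{n,j}^{(or)-}(u_j) \bigr\}.
\end{align*}
The second bracket is $o_{\mathbb P}(n^{-1/4})$ uniformly in $u_j\in[\gamma,1-\gamma]$ directly from Fact~\ref{prel:unif_conv_G^-1}.

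For the first bracket, I would like to invoke Fact~\ref{prel:consistency_inverse}; that fact, however, is stated with a deterministic argument ranging in a fixed compact subinterval of $(0,1)$, while here $v_j$ is random. I would therefore first localize $v_j$ in $[\gamma/2,1-\gamma/2]$ with probability tending to one via the triangle bound
\begin{align*}
\sup_{u_j\in[\gamma,1-\gamma]} \abs{v_j(u_j) - u_j}
\leq
\sup_{u_j\in[\gamma,1-\gamma]} \abs{\hat G_{n,j}^-(u_j) - \hat G_{n,j}^{(or)-}(u_j)}
+ \sup_{u_j\in[0,1]} \abs{\hat G_{n,j}^{(or)-}(u_j) - u_j},
\end{align*}
where the first term on the right is $o_{\mathbb P}(n^{-1/4})$ by Fact~\ref{prel:unif_conv_G^-1} and the second is $O_{\mathbb P}(n^{-1/2})$ since $\hat G_{n,j}^{(or)-}$ is the empirical quantile function of the iid $\mathrm{Uniform}(0,1)$ sample $\{F_j(Y_{ij}|X_i)\}_{i=1}^n$ (recall \eqref{factuniformdist}). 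Hence the event $E_n = \{v_j(u_j) \in [\gamma/2, 1-\gamma/2] \text{ for all } u_j \in [\gamma,1-\gamma]\}$ has probability tending to one.

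On $E_n$, the supremum of the first bracket is dominated by a deterministic range:
\begin{align*}
\sup_{u_j\in[\gamma,1-\gamma],\, x\in S_X} \abs{F_j\bigl(\hat F_{n,j}^-(v_j|x)|x\bigr) - v_j}
\leq
\sup_{w\in[\gamma/2,1-\gamma/2],\, x\in S_X} \abs{F_j\bigl(\hat F_{n,j}^-(w|x)|x\bigr) - w},
\end{align*}
and the right-hand side is $o_{\mathbb P}(n^{-1/4})$ by Fact~\ref{prel:consistency_inverse} applied with $\gamma/2$ in place of $\gamma$. Adding the two bounds yields the claim. The only mildly delicate point is the localization step for $v_j$, and it presents no real obstacle since it is a direct consequence of Fact~\ref{prel:unif_conv_G^-1} combined with the standard $\sqrt{n}$-behavior of the uniform empirical quantile process.
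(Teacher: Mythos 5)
Your proof is correct and follows essentially the same route as the paper: add and subtract $\hat G_{n,j}^-(u_j)$, dispose of the second piece via Fact~\ref{prel:unif_conv_G^-1}, localize $\hat G_{n,j}^-(u_j)$ into $[\gamma/2,1-\gamma/2]$ with probability tending to one (the paper uses Fact~\ref{prel:unif_conv_G^-1} together with Lemma~\ref{lem:Vervaat:random} for this, which is your ``standard $\sqrt n$-behavior'' step), and then invoke Fact~\ref{prel:consistency_inverse} with $\gamma/2$ in place of $\gamma$. Your extra care in flagging the random argument of Fact~\ref{prel:consistency_inverse} and spelling out the replacement $\gamma\mapsto\gamma/2$ makes the step slightly more explicit than the paper's, but the argument is the same.
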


\begin{proof}
Write
\begin{align*}
\hat \Delta_n(u_j|x) 
  =  
  \left| 
    F\bigl(\hat  F_n ^-\bigl( \hat{G}_{n,j}^- (u_j)|\,x \bigr)|\,x\bigr)  - \hat{G}_{n,j}^{-} (u_j) 
  \right|
  +
  \left|
    \hat{G}_{n,j}^{-} (u_j)- \hat{G}_{n,j}^{(or)-} (u_j)
  \right|.
\end{align*}
By Fact \ref{prel:unif_conv_G^-1}, we only need to treat the first term on the right-hand side. Using the fact that $ \sup_{u_j\in [\gamma,1-\gamma]} \lvert \hat{G}_{n,j}^{-} (u_j)-   \hat{G}_{n,j}^{(or)-}(u_j) \rvert  = o_{\mathbb P}(1)$ and the fact that $ \sup_{u_j\in (0,1]} \lvert   \hat{G}_{n,j}^{(or)-}(u_j) - u_j \rvert =o_{\mathbb P}( 1)$, which is a consequence of Lemma \ref{lem:Vervaat:random}, we  know that $ \hat{G}_{n,j}^{-} (u_j)$ takes values in $[\gamma/2,1-\gamma/2]$ with probability going to $1$. Then we use Fact \ref{prel:consistency_inverse} to conclude that $\hat \Delta_n(u_j|x)  = o_{\mathbb P}(n^{-1/4})$.
\end{proof}

\paragraph{First step:} We show that
\begin{align*}
  \sup_{{\bu}\in [\gamma,1-\gamma]^2} \abs{\hat A_{n,1}({\bu})} 
  = o_{\mathbb P}(1), \qquad n \to \infty.
\end{align*}
By Fact \ref{prel:equivalence_inverse}, it holds that (with a slight abuse of notation)
\begin{align*}
  \hat A_{n,1}(\bu)
  &= 
  \mathbb G_n \left\{ 
    \ind_{\{ \bY\, \leq\,  \hat {\bm F}_{n}^-(\hat{\bm{G}}^-_{n} (\bu)|X) \}}
    -
    \ind_{\{ \bY\, \leq\,  \bm{F}^-(\hat{\bm{G}}^{(or)-}_{n} (\bu)|X) \}}
  \right\}.
\end{align*}
Therefore we apply Theorem~\ref{lemmawellner} with $\xi_i = (X_i, \bY_i )$, $\mathcal{X} =S_X \times \reals^2  $, $\mathcal{U} = [\gamma, 1-\gamma]^2$ and $\mathcal{E} $  the space of measurable functions valued in $\R^4$ and defined on $S_X  \times [\gamma,1-\gamma]^2   $. Moreover, the quantities $\eta_0$ and $\hat \eta_n$ are 
 given by, for every $\bu \in [\gamma,1-\gamma]^2$ and $x\in S_X$,
\begin{align*}
& \eta_0(\bu,x)= \left(\bm{F}^-(\bu|x),\bm{F}^-(\bu|x)\right), \\
& \hat \eta_n(\bu,x)=\left(\hat{\bm{F}}_{n}^-\big(\hat{\bm{G}}^-_{n} (\bu)|x\big),\bm{F}^-\big(\hat{\bm{G}}^{(or)-} _{n}(\bu)|x\big)\right).
\end{align*}
Identifying $u \in \mathcal{U}$ with $\bm{u} \in [\gamma, 1-\gamma]^2$ and $\eta \in \mathcal{E}$ with $({\bm{\eta}}_1,{\bm{\eta}}_2)$, where $\bm{\eta}_j$, $j\in\{1,2\}$, are valued in $\R^2$, the map $m_{u,\eta} : \reals^2 \times S_X \to \reals$ is given by
\begin{align*}
m_{u,\eta}(\by,x ) =  \ind_{\{\by\, \leq \, \bm{\eta}_1(\bu  ,x)\}}-\ind_{\{\by\, \leq  \, \bm{\eta}_2(\bu  ,x)\}} , 
\end{align*}
Finally, the space $\mathcal{E}_0$ is the collection of those elements  $\eta = ({\bm\eta}_1,{\bm\eta}_2)$ in $\mathcal{E}$  such that
\begin{align*}
  \{ x \mapsto \bm{\eta}_1(\bu, x) \, :\, \bu \in [\gamma,1-\gamma]^2 \} 
  &\subset
  \bigl( \mathcal C_{1+\delta_1,M_1}(S_{X}) \bigr)^2,\\
  \{ x\mapsto \bm{\eta}_2(\bu,x) \, :\, \bu\in [\gamma,1-\gamma]^2 \} 
  &\subset
  \bigl( \mathcal C_{1+\delta,M_2}(S_{X}) \bigr)^2,
\end{align*}
where $M_2$ depends only on $b_\gamma$ and $M$. In the following we check each condition of Theorem~\ref{lemmawellner}.

\textit{Verification of Condition~(\ref{point1thwellner}) in Theorem~\ref{lemmawellner}.} 
Because the indicator function is bounded by $1$, we have
\begin{multline*}
  \int 
    \abs{
      \ind_{\{\by\,\leq\, \hat{\bm{F}}_{n}^-( \hat{\bm{G}}^- _{n}(\bu) |x)\}}
      -
      \ind_{\{\by \, \leq\,   \bm{F}^-( \hat{\bm{G}}^{(or)-}_{n} (\bu) |x)\}}
    }^2
  f_{X, \bY}(x, \by) \, \diff (x, \by) \\
  \le
  \sum_{j=1}^2 \sup_{u_j \in [\gamma, 1-\gamma]}
  \int
    \abs{
      \ind_{\{y_j\,\leq\, \hat F_{n,j}^-( \hat{G}^-_{n,j} (u_j) |x)\}}
      -
      \ind_{\{y_j\,\leq\,  F_{j}^-( \hat{G}^{(or)-}_{n,j} (u_j) |x)\}}
    }^2
  f_{X, Y_j}(x, y_j) \, \diff (x, y_j),
\end{multline*}
so that we can focus on each margin separately. Recall that if  the random variable $U$ is uniformly distributed on $(0, 1)$, then $\operatorname{E}(\ind_{\{U\leq u_1 \}}-\ind_{\{U\leq u_2 \}})^2 = \abs{u_1-u_2} $. Writing $\hat a_{n,x}(u_j)= \hat F_{n,j}^-\big( \hat{G}^-_{n,j} (u_j) |x\big) $, and using~\eqref{assimplifying} and~\eqref{factuniformdist}, we have
\begin{align*}
  \lefteqn{
  \int
    \abs{
      \ind_{\{y_j\,\leq\, \hat a_{n,x}(u_j) \}}
      -
      \ind_{\{y_j\,\leq\,  F_{j}^-( \hat{G}^{(or)-}_{n,j} (u_j) |x)\}}
    }^2
  f_{X, Y_j}(x, y_j) \, \diff (x, y_j)
  } \\
  &=
  \int
    \abs{ 
      \ind_{\{ F_{j}(y_j|x)\,\leq\,   F_{j}(\hat a_{n,x}(u_j)|x)\}}
      -
      \ind_{\{ F_{j}(y_j|x)\,\leq\,    \hat{G}^{(or)-}_{n,j} (u_j)\}}
    }^2
  f_{X, Y_j}(x, y_j) \, \diff (x, y_j)
  \\
  &=
  \int_{S_X}
    \abs{ F_{j}( \hat a_{n,x}(u_j) | x)-\hat{G}^{(or)-}_{n,j} (u_j) }
  f_X(x) \, \diff x
  \\
  &= 
  \int_{S_X}  \abs{ \hat \Delta_{n,j}(u_j |x ) } \, f_X(x) \, \diff x,
\end{align*}
where $\hat \Delta_{n,j} $ has been defined in \eqref{def:deltaj}. Fact \ref{prel:unif_conv_Delta}, demonstrated during the preliminary step, permits to conclude.

\textit{Verification of Condition~(\ref{point2thwellner}) in Theorem~\ref{lemmawellner}.} We establish that, for each $j=1,2$,
\begin{align*}
\left\{x\mapsto \hat F_{n,j}^-\big( \hat{G}^-_{n,j} (u_j)|x\big)\ :\ u_j\in [\gamma,1-\gamma]\right\} &\subset \mathcal C_{1+\delta_1,M_1}(S_{X}),\\
\left\{x\mapsto F_{j}^-\big(\hat{G}^{(or)-} _{n,j}(u_j)|x\big)\ :\ u_j\in [\gamma,1-\gamma] \right\} &\subset \mathcal C_{1+\delta,M_2}(S_{X}),
\end{align*}
with probability going to $1$.
\begin{compactitem}
\item
For the  first inclusion, we have already shown in the proof of Fact \ref{prel:unif_conv_Delta} that $ \hat{G}_{n,j}^-  (u _j)\in [\gamma/2,1-\gamma/2] $, for every $u_j\in [\gamma,1-\gamma] $, with probability going to $1$. On this set, we have 
\[
  \bigl\{ 
    x \mapsto \hat F_{n,j}^-\big( \hat{G}_{n,j}^-  (u_j ) |x\big)\ :\ 
    u_j \in [\gamma,1-\gamma]
  \bigr\}  
  \subset 
  \bigl\{
    x \mapsto \hat F_{n,j}^-(u_j  |x)\ :\ 
    u_j \in [\gamma/2,1-\gamma/2]
  \bigr\},
\]
which is included in $\mathcal C_{1+\delta_1,M_1}(S_{X})$ by (H\ref{cond:high_level_donsker}).
\item
For the  second inclusion, by Lemma~\ref{lem:Vervaat:random}, for every $u_j\in [\gamma,1-\gamma]$, it holds that $\hat{G}^{(or)-}_{n,j} (u_j)\in [\gamma/2,1-\gamma/2]$ with  probability going to one. It follows that 
\[
  \bigl\{ 
    x \mapsto F_{j}^-\big(\hat{G}^{(or)-} _{n,j}(u_j)|x\big)\ :\ 
    u_j\in [\gamma, 1-\gamma]
  \bigr\}
  \subset 
  \bigl\{
    x \mapsto F_{j}^-( u_j|x)\ :\ 
    u_j\in [\gamma/2,1-\gamma/2] 
  \bigr\},
\] 
which is included in $\mathcal C_{1+\delta,M_2}(S_{X})$ by (\ref{inclusin:smouthness}), for some $M_2>0$.
\end{compactitem}

\textit{Verification of Condition~(\ref{point3thwellner}) in Theorem~\ref{lemmawellner}.} It is enough to show that the class of functions 
\begin{align*}
  \left\{
    \ind_{\{\by\,\leq\,  \bm {g_1}(x)\}} - \ind_{\{\by\,\leq\,  \bm{g_2}(x)\}}
    \quad  :\quad 
    (\bm{g_1},\bm{g_2}) \in 
      \bigl( \mathcal C_{1+\delta_1,M_1}(S_{X}) \bigr)^2 
      \times 
      \bigl( \mathcal C_{1+\delta,M_2}(S_{X}) \bigr)^2  
  \right\}
\end{align*}
is $P$-Donsker. Since the sum and the product of two bounded Donsker classes is Donsker \citep[Example~2.10.8]{wellner1996}, we can focus on the class
\begin{align*}
\big\{\ind_{\{y\,\leq\,  g(x)\}}\quad  :\quad g\in \mathcal C_{1+\delta,M}(S_{X})\big \}.
\end{align*}
For any $\delta>0$ and $M>0$, the latter is Donsker since the class of subgraphs of $\mathcal C_{1+\delta,M}(S_{X})$, under (G\ref{cond:smoothnessdensity1}), has a sufficiently small entropy \citep[Corollary~2.7.5]{wellner1996}.

\paragraph{Second step:} We show that
\begin{align*}
  \sup_{{\bu}\in [\gamma,1-\gamma]^2} \abs{ \hat A_{n,2}({\bu}) }
  = o_{\mathbb P}(1), \qquad n \to \infty.
\end{align*}
Because of the simplifying assumption \eqref{assimplifying}, we have,  for every $\bu\in [0,1]^2$, the formula
\begin{align}
\label{formulaoperator}
  \int \ind_{\{\by\,\leq\, \hat {\bm{F}} ^-_{n}(\hat{\bm{G}}^-_{n} (\bu)|x)\}} \, f_{X, \bY}(x, \by) \, \diff(x, \by)
  = 
  \int 
    C \big( 
      \bm{F} \big(
	\hat{\bm{F}}^-_{n}\big(\hat{\bm{G}}^-_{n} (\bu)|x \big) \mid x
      \big) 
    \big)\, 
    f_X(x) \, 
  \diff x.
\end{align}
Next we  use (G\ref{cond:copula_smoothness}) to expand $C$ in the previous expression around $ \hat{\bm{G}}^{(or)-}_{n} (\bu)$ and then we conclude by using the rates for the quantities $\hat \Delta_{n,j}(u_j|x)$, $ j= 1,2$, established in Fact \ref{prel:unif_conv_Delta}.

In the light of the first step, we have
\begin{align*}
  \sup_{\bu\in [\gamma,1-\gamma]^2} 
  \left| 
    n^{1/2}  \{  \hat C_n(\bu) - \hat C_n^{(or)}(\bu) \} 
    -
    n^{1/2} P 
    \left\{ 
      \ind_{\{ \hat{\bm{F}}_{n}\,\leq\, \hat{\bm{G}}_{n}^- (\bu)\}} 
      -
      \ind_{\{   \bm{F}\,\leq\, \hat{\bm{G}}_{n}^{(or)-} (\bu) \}}
    \right\} 
  \right| 
  = o_{\mathbb P}(1).
\end{align*}
Moreover, because $\hat C_n(u_1,1) = \hat G_{n,1}\big(\hat G_{n,1}^-(u_1)\big )$ and $\hat C_n^{(or)}(u_1,1) = \hat G_{n,1}^{(or)}\big(\hat G_{n,1}^{(or)-}(u_1) \big)$, we find 
\begin{align*}
  \sup_{u_1\in [\gamma,1-\gamma] } \abs{  \hat C_n(u_1,1) - u_1} = O(n^{-1})
  \qquad \text{and} \qquad 
  \sup_{u_1\in [\gamma,1-\gamma] } \abs{ \hat C_n^{(or)}(u_1,1) -u_1} =O(n^{-1}),
\end{align*}  
which  implies, by the triangle inequality, that
\begin{align*}
  \sup_{u_1\in [\gamma,1-\gamma] } 
  \abs{n^{1/2} \{ \hat C_n(u_1,1) - \hat C_n^{(or)}(u_1,1) \}}
  = o(1).
\end{align*}
Similarly, we find
\begin{align*}
  \sup_{u_2\in [\gamma,1-\gamma] } 
  \abs{n^{1/2} \{ \hat C_n(1,u_2) - \hat C_n^{(or)}(1,u_2) \} } 
  = o(1) .
\end{align*}
Bringing all these facts together yields, for any $j=1,2$,
\begin{align*}
& \sup_{u_j\in [\gamma,1-\gamma] }   \left| n^{1/2} P\left\{ \ind_{\{ \hat  F_{n,j}\,\leq\, \hat{G}^-_{n,j} (u_j) \}} -\ind_{\{  F_{j}\,\leq\, \hat{G}^{(or)-}_{n,j} (u_j) \}}\right\}\right| = o_{\mathbb P}(1).
\end{align*}
Hence, from the definition of  $\hat \Delta_{n,j}(u_j|x)$ given in equation~\eqref{def:deltaj}, it  follows that, for any $j=1,2$,
\begin{align}\label{eq:neglectabilitymargins}
  \sup_{u_j\in [\gamma,1-\gamma] }  
  \abs{ n^{1/2} \int \hat \Delta_{n,j}(u_j|x)\,  f_X(x)\, \diff x } 
  = o_{\mathbb P}(1),
\end{align}
Next define 
\begin{align*}
  \hat W_n(\bu|x) 
  &= 
  \hat \Delta_{n,1}(u_1|x) \, \dot{C}_1(\bm{u})
  +  
  \hat \Delta_{n,2}(u_2|x) \, \dot{C}_2(\bm{u}),
  \\
  \hat W_n(\bu) 
  &= \int \hat W_n(\bu|x) \, f_X(x)\, \diff x.
\end{align*}
 By~\eqref{eq:neglectabilitymargins}, it holds that $ \sup_{\bu \in [\gamma,1-\gamma]^2 } \lvert \sqrt n  \, \hat W_n(\bu) \rvert = o_{\mathbb P}(1)$. Then because of~\eqref{formulaoperator} and the simplifying assumption~\eqref{assimplifying}, we have
\begin{align*}
  \lefteqn{
    \hat A_{n,2}(\bu)- \hat W_n(\bu) 
  } \\
  &=
  n^{1/2} \int_{S_X} 
  \left\{
    H \bigl( \hat{\bm{F}} _{n}^-\bigl(\hat{\bm{G}}_{n}^- (\bu) | x \bigr) | x \bigr) -
    H\big(  \bm{F}^-\bigl(\hat{\bm{G}}_{n}^{(or)-} (\bu) | x \bigr) | x \bigr) -
    \hat W_n(\bu|X)
  \right\}
  f_X(x) \, \diff x \\
  &=
  n^{1/2} \int_{S_X}
  \left\{
    C\left(\bm{F}\bigl( \hat{\bm{F}}_{n}^-\bigl(\hat{\bm{G}}_{n}^- (\bu) |x\bigr)|x\bigr) \right) -C\left(\hat{\bm{G}}_{n}^{(or)-}(\bu)\right) -
    \hat W_n(\bu|x)
  \right\}
  f_X(x) \, \diff x.
\end{align*}
The  term inside the second integral equals
\begin{align*}
  C\left(\hat{\bm{G}}_{n}^{(or)-}(\bu) + \hat{\bm{\Delta}}_n(\bu|x) \right)   
  -
  C\left(\hat{\bm{G}}_{n}^{(or)-}(\bu)\right)
  -
  \hat W_n(\bu|x),
\end{align*}
with $\hat {\bm{\Delta}}_n(\bu|x)= (\hat \Delta_{n,1}(u_1|x), \, \hat \Delta_{n,2}(u_2|x))$. Then, by Lemma~\ref{lem:ddotC} [which we can apply since $\hat{\bm{G}}_{n}^{(or)-}(\bu) $ lies in the interior of the unit square], we have, for all $\bu \in [\gamma,1-\gamma]^2$ and all $x \in S_X$,
\[
  \abs{ C\left(\hat{\bm{G}}_{n}^{(or)-}(\bu) +\hat {\bm{\Delta}}_n(\bu|x) \right)   -C\left(\hat{\bm{G}}_{n}^{(or)-}(\bu)\right) -\hat W_n(\bu|x) }
  \le \frac{4\kappa}{\gamma} \, \abs{ \hat {\bm{\Delta}}_n( \bu | x ) }^2.
\]
Integrating out over $x \in S_X$ yields
\begin{align*}
  \abs{ \hat {A}_{n,2}( \bu ) - \hat {W}_n( \bu ) }
  &\le {n}^{1/2} \, \frac{4\kappa}{\gamma} \, \int_{S_X} \abs{ \hat {\bm{\Delta}}_n( \bu | x ) }^2 \, f_X(x) \, \diff x\\
  &\le {n}^{1/2} \, \frac{4\kappa}{\gamma} \, \sup_{\bu\in [\gamma,1-\gamma]^2,\, x\in S_X} \abs{ \hat {\bm{\Delta}}_n( \bu | x ) }^2 ,
\end{align*}
which is $o_{\mathbb P} (1)$ by Fact \ref{prel:unif_conv_Delta}.
\qed

\section{Proof of Theorem \ref{theorem:weakcv}}
\label{app:proof_th_weak_cv}

The proof follows from an application of Theorem \ref{theorem:weakcv_highlevel}. We need to show that (H\ref{cond:high_level_inverse_inequality}), (H\ref{cond:high_level_consistency}) and (H\ref{cond:high_level_donsker}) are valid. The last two assertions are direct consequences of Proposition \ref{prop:uniformCVderivatives} and Proposition \ref{prop:regularity}, respectively. Those propositions are stated in section \ref{sec:loc_lin_results}.

Hence, we only need to show that (H\ref{cond:high_level_inverse_inequality}) holds. By Lemma~\ref{lemma:positivedenominator}, there exists $c > 0$  depending on $K$ such that the  events
\begin{align*}
  E_{n,3} = \left\{  
    \inf_{x \in S_X}
    \left\{ \hat p_{n,0}(x)\,\hat p_{n,2}(x)-\hat p_{n,1}(x)^2 \right\} \geq {b^2c}{} 
  \right\} ,
\end{align*}
 have probability going to $1$. Under  $E_{n,3}$, equations \eqref{formula:ll} and \eqref{formula:llderivatives} hold, i.e.,
\begin{align*}
&\hat F_{n,j}(y_j|x) = \frac{\hat Q_{n,j,0}(y_j,x)\, \hat p_{n,2}(x) - \hat Q_{n,j,1}(y_j,x)\,\hat p_{n,1}(x)}{\hat p_{n,0}(x)\,\hat p_{n,2}(x)-\hat p_{n,1}(x)^2},\\
&\hat f_{n,j}(y_j|x) = \frac{\hat q_{n,j,0}(y_j,x)\, \hat p_{n,2}(x) - \hat q_{n,j,1}(y_j,x)\, \hat p_{n,1}(x)}{\hat p_{n,0}(x)\, \hat p_{n,2}(x)-\hat p_{n,1}(x)^2},
\end{align*}
with $\hat f_{n,j}(y_j|x) = \partial _{y_j}\hat F_{n,j}(y_j|x) $, and, for every $k \in\mathbb N$,
\begin{align*}
&\hat q_{n,j,k} (y_j,x) = n^{-1}\sum_{i=1}^n L_{h_{n,2}}(y_j-Y_{ij})\, w_{k,h_{n,1}}(x-X_i),\\
&\hat Q_{n,j,k}(y_j,x) =\int_{-\infty}^{y_j} \hat q_{n,j,k}(t,x)\, \diff t .
\end{align*}
Hence, on the event $E_{n,3}$, $y_j\mapsto \hat F_{n,j}(y_j|x)$ is continuous and differentiable on $\mathbb R$. By the uniform convergence of $\hat f_{n,j}$, stated in Proposition~\ref{prop:uniformCVderivatives}, and using (G\ref{cond:smoothnessdensity1}), we get for all $x\in S_X$ and all $y_j\in [F_{j}^{-}(\gamma|x),F_{j}^{-}(1-\gamma|x)]$ that 
\begin{align*}
\hat f_{n,j} ( y_j|x) &\geq   f_{j} ( y_j|x)  - \sup_{x\in S_X,\, y_j\in \R} \abs{\hat f_{n,j} ( y_j|x)-  f_{j} ( y_j|x)}\\
& \geq b-o_{\mathbb P}(1).
\end{align*}
Hence the event
\begin{align*}
  E_{n,4} = \left\{
    \min_{j \in \{1, 2\}} \inf_{ x\in S_X } \inf _{y_j\in [F_{j}^{-}(\gamma|x),F_{j}^{-}(1-\gamma|x)] } 
    \hat f_{n,j} ( y_j|x)
    \geq b/2
  \right\}.
\end{align*}
has probability going to $1$. Conclude by noting that under  $E_{n,4}$, for every $x\in S_X$, the function $F_{n,j}(\point |x)$ is strictly increasing on $[F_{j}^{-}(\gamma|x),F_{j}^{-}(1-\gamma|x)]$.
\qed

\section{Proofs of the auxiliary results on the smoothed local linear estimator}

\subsection{Uniform convergence rates for kernel estimators}

Our analysis of the smoothed local linear estimator relies on a result on the concentration around their expectations of certain kernel estimators. The result follows from empirical process theory and notably by the use of some version of Talagrand's inequality \citep{talagrand1994}, formulated in \cite{gine+g:02}. For stronger results including exact almost-sure convergence rates, see \cite{einmahl2000}. 

\begin{proposition}\label{prop:concentration:kernel}
Let $(X_i,Y_i)$, $i=1,2,\ldots,$ be independent copies of a bivariate random vector $(X,Y)$. Assume that $(X,Y)$ has a bounded density $f_{X,Y}$ and that $K$ and $L$ are bounded real functions of bounded variation that vanish outside $[-1,1]$. For any sequences $h_{n,1}\rightarrow 0$ and $h_{n,2}\rightarrow 0$ such that $nh_{n,1} / \abs{\log h_{n,1}} \to \infty $ and $ {n h_{n,1}h_{n,2} } /\abs{\log h_{n,1}h_{n,2}}\to \infty  $, we have, as $n \to \infty$, 
\begin{align*}
&\sup_{x\in \R}\left | n^{-1} \sum_{i=1}^n \left\{ K_{h_{n,1}}(x-X_i) - E[ K_{h_{n,1}}(x-X)  ]\right\} \right |= O_{\mathbb P}\left(\sqrt {\frac{\abs{\log h_{n,1}}}{n {h_{n,1}} }} \right),\\
&\sup_{(x,y)\in \R^2}\left | n^{-1} \sum_{i=1}^n \left\{L_{h_{n,2}}(y-Y_i)\, K_{h_{n,1}}(x-X_i) - E[L_{h_{n,2}}(y-Y)\, K_{h_{n,1}}(x-X)  ]\right\} \right |= O_{\mathbb P}\left(\sqrt {\frac{\abs{\log h_{n,1}h_{n,2}}}{n h_{n,1}h_{n,2} }} \right),\\
&\sup_{(x,y)\in \R^2}\left | n^{-1} \sum_{i=1}^n \left \{\big(  L_{h_{n,2}}(y-Y_{i} )  - E [L_{h_{n,2}}(y-Y_{i} )\mid X_i ]\big)\,   K_{h_{n,1}}(x-X_i)\right\}  \right |= O_{\mathbb P}\left(\sqrt {\frac{\abs{\log h_{n,1}h_{n,2}}}{n h_{n,1}h_{n,2} }} \right),\\
&\sup_{(x,y)\in \R^2}\left | n^{-1} \sum_{i=1}^n \left\{ \varphi_{h_{n,2}}(y,Y_{i} ) \, K_{h_{n,1}}(x-X_i) - E[\varphi_{h_{n,2}}(y,Y )\,  K_{h_{n,1}}(x-X)  ]\right\} \right |= O_{\mathbb P}\left(\sqrt {\frac{\abs{\log h_{n,1}} }{n {h_{n,1}} }} \right),\\
&\sup_{(x,y)\in \R^2}\left | n^{-1} \sum_{i=1}^n \left \{\big(   \varphi_{h_{n,2}}(y,Y_{i} )  - E [\varphi_{h_{n,2}}(y,Y_{i} )\mid X_i ]\big)\,   K_{h_{n,1}}(x-X_i)\right\}  \right | = O_{\mathbb P}\left(\sqrt {\frac{\abs{\log h_{n,1}} }{n {h_{n,1}} }} \right),
\end{align*}
where $K_h(\,\cdot\,)=h^{-1}K(\,\cdot\,/h)$, $L_h(\,\cdot\,) = h^{-1} L(\,\cdot\,/h)$, and $ \varphi_h(y,Y_{i} ) =\int _{-\infty} ^{y} L_{h}(t-Y_{i}) \, \diff t $.
\end{proposition}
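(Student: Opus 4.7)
The plan is to cast each of the five displays as the uniform deviation of an empirical process indexed by a class $\mathcal F_n$ of functions of $(X, Y)$, and then to apply the Gin\'e--Guillou version of Talagrand's inequality for VC-type classes \citep{gine+g:02}. Concretely, for (i) I take $\mathcal F_{n,1} = \{ (u,v) \mapsto K_{h_{n,1}}(x - u) : x \in \R \}$; for (ii) the product family $\{ (u,v) \mapsto K_{h_{n,1}}(x - u)\, L_{h_{n,2}}(y - v) : (x,y) \in \R^2 \}$; for (iv) the family $\{ (u,v) \mapsto K_{h_{n,1}}(x - u)\, \varphi_{h_{n,2}}(y, v) : (x,y) \in \R^2 \}$. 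The centred statements (iii) and (v) are handled via the same classes after subtracting conditional expectations, which only decreases envelope and variance.

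For each class I extract a uniform envelope $F_n$ and a variance bound $\sigma_n^2 = \sup_{f \in \mathcal F_n} Pf^2$. Using boundedness of $K$, $L$ and $f_{X,Y}$, one finds $F_n \asymp h_{n,1}^{-1}$ and $\sigma_n^2 \asymp h_{n,1}^{-1}$ for (i); $F_n \asymp (h_{n,1} h_{n,2})^{-1}$ and $\sigma_n^2 \asymp (h_{n,1} h_{n,2})^{-1}$ for (ii) and (iii). The essential observation for (iv) and (v) is that $\varphi_{h}(y, \cdot)$ takes values in $[0, 1]$ regardless of $h$, so envelope and variance remain of order $h_{n,1}^{-1}$; this accounts for the slower rate announced in (iv), (v) and for the distinction with (ii), (iii).

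The second ingredient is the VC-type property of each $\mathcal F_n$, uniformly in the bandwidths. Since $K$ and $L$ have bounded variation, each is a difference of two bounded monotone functions, whose translation families form VC-subgraph classes with index independent of the bandwidth \citep[Section~2.6.5]{wellner1996}. The map $y \mapsto \varphi_h(y, v)$ is monotone in $y$ with total variation at most one, so the corresponding translation family is similarly VC with envelope equal to one. The product and difference classes are then VC-type by the standard permanence rules \citep[Section~2.6]{wellner1996}, with covering-number constants $(A, v)$ independent of $h_{n,1}$ and $h_{n,2}$.

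With these pieces in place, the Gin\'e--Guillou inequality yields, with probability tending to one,
\begin{align*}
\sup_{f \in \mathcal F_n}\, \abs{(P_n - P) f}
\;\lesssim\;
\sqrt{\frac{\sigma_n^2\,\abs{\log(F_n/\sigma_n)}}{n}}
\;+\;\frac{F_n\, \abs{\log(F_n/\sigma_n)}}{n}.
\end{align*}
Substituting the envelopes and variances computed above produces exactly the rates in the statement (note that $\log(F_n/\sigma_n) \asymp \abs{\log h_{n,1}}$ in (i), (iv), (v) and $\log(F_n/\sigma_n) \asymp \abs{\log h_{n,1}h_{n,2}}$ in (ii), (iii)), while the bandwidth hypotheses $n h_{n,1}/\abs{\log h_{n,1}} \to \infty$ and $n h_{n,1} h_{n,2}/\abs{\log(h_{n,1} h_{n,2})} \to \infty$ force the linear remainder to be of smaller order than the square-root term. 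The main obstacle is not conceptual but a matter of bookkeeping: one has to check that the VC constants and the logarithmic factors are genuinely uniform in the bandwidths, and to track carefully the atypical envelope arising in the two $\varphi_{h_{n,2}}$-cases.
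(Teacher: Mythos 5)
Your overall strategy matches the paper's: cast each display as a uniform empirical-process deviation over a VC-type class, extract envelope and variance bounds, and invoke the Gin\'e--Guillou form of Talagrand's inequality. Your observation that $\varphi_h(y,\cdot)$ is uniformly bounded by one regardless of $h$ (which accounts for the slower rate in statements (iv)--(v)) is exactly the paper's key point, and your envelope/variance/logarithm bookkeeping is correct.

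There is, however, a genuine gap in your treatment of the centered statements (iii) and (v). You assert that they are ``handled via the same classes after subtracting conditional expectations, which only decreases envelope and variance,'' and later that the resulting ``difference classes are VC-type by standard permanence rules.'' But the function class needed here contains the maps $u \mapsto E\bigl[ L_{h_{n,2}}(y-Y) \mid X = u \bigr]$ (and the analogous maps with $\varphi_{h_{n,2}}$), indexed by $y$ (and $h_{n,2}$). These are \emph{not} translation families, they depend on the unknown conditional density, and they are not obtained from the original translation classes by the algebraic operations covered by the usual permanence lemmas. Decreasing envelope and variance is not sufficient: the uniform covering-number bound (with constants uniform over bandwidths) is what drives the logarithmic factor, and it must be re-established for this new class. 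The paper's proof closes this hole with a specific change-of-measure argument: for any probability measure $Q$ on the $x$-space, define $\diff\tilde Q(y) = \int f(y \mid x)\, \diff Q(x)\, \diff y$; then by Jensen and Fubini, an $\epsilon$-cover of the translation class in $L_2(\tilde Q)$ induces an $\epsilon$-cover of the conditional-expectation class in $L_2(Q)$, so the two classes have the same uniform covering-number bound. Only after establishing this does the product-class permanence rule (Lemma~2.6.18 of van der Vaart and Wellner) apply, and (iii) and (v) follow via the triangle inequality combined with the already-proved (ii) and (iv). Without an argument of this kind, your statements (iii) and (v) are not justified.
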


\begin{proof}
We use the definition of bounded measurable VC classes given in \cite{gine+g:02}. That is, call a class $\mathcal F$ of measurable functions a bounded measurable VC class if $\mathcal F$ is separable or image admissible Suslin \citep[Section 5.3]{dudley1999} and if there exist $A>0$ and $v>0$ such that, for every probability measure $Q$, and every $0<\epsilon<1$,
\begin{align*}
\mathcal N \left(\mathcal F, L_2(Q), \epsilon\|F\|_{L_2(Q)}\right)\leq \left(\frac{A}{\epsilon}\right)^{v}, 
\end{align*}
where $F$ is an envelope for $\mathcal F$ and $\mathcal N(T,d,\epsilon)$ denotes the $\epsilon$-covering number of the metric space $(T,d)$ \citep{wellner1996}. [Other terminologies associated to the same concepts are \textit{Euclidean classes} \citep{nolan+p:1987} and \textit{Uniform entropy numbers} \citep{wellner1996}.] Using \cite{nolan+p:1987}, Lemma~22, Assertion~(ii), the classes  $\{z\mapsto L(h^{-1}(y-z))\ : \ y\in \R,\ h>0\}$ and $\{z\mapsto K(h^{-1}(x-z))\ : \ x\in \R,\ h>0\}$ are bounded measurable VC [see \cite{gine+g:02} for remarks and references on measurability issues associated to the previous classes]. 

The first statement is a direct application of Theorem~2.1, Equation~(2.2), in \cite{gine+g:02}, with, in their notation,
\begin{align*}
&\mathcal F_n = \left\{z\mapsto K\big(h_{n,1}^{-1}(x-z)\big)\ : \ x\in \R\right\},\\
&\sigma_n^2 = h_{n,1}\,\|f_X\|_\infty \int K(u)^2\, \diff u,\\
& U = \sup_{u\in [-1,1]}|K(u)|,
\end{align*}
where $f_X$ denotes the density of $X$ and $\|\cdot\|_\infty $ is the supremum norm. The class $\mathcal F_n$ is included in one of the bounded measurable VC classes given above. Moreover, we have
\begin{align*}
\Var\left(K\big(h_{n,1}^{-1}(x-X)\big)\right)&\leq \int K\big(h_{n,1}^{-1}(x-z)\big)^2\, f_X(z) \, \diff z\\
&=h_{n,1} \int K(u)^2\, f_X(x-h_{n,1}u) \, \diff u\\
&\leq \sigma_n^2.
\end{align*}
The cited equation yields that the expectation of the supremum over $x$ of the absolute value of $n^{-1} \sum_{i=1}^n \big\{ K_{h_{n,1}}(x-X_i) - E[ K_{h_{n,1}}(x-X)  ]\big\}$ is bounded by some constant times
\begin{align*}
(nh_{n,1})^{-1} \abs{\log (h_{n,1})} +  \left\{ {(nh_{n,1})^{-1} \abs{\log (h_{n,1})}} \right\}^{1/2} =O\left( \left\{ {(nh_{n,1})^{-1} \abs{\log (h_{n,1})}} \right\}^{1/2}  \right).
\end{align*}
Hence we have obtained the first assertion.

Preservation  properties for VC classes \citep[Lemma~2.6.18]{wellner1996} imply that the product class 
\begin{align*}
  \left\{
    (z_1,z_2) \mapsto L\bigl(h_2^{-1}(y-z_2)\bigr)\, K\bigl(h_1^{-1}(x-z_1)\bigr) \ : \ 
    x \in \R,\ y \in \R,\ h_1>0,\ h_2>0
  \right\}
\end{align*}
is bounded measurable VC. Then we can apply Theorem~2.1, Equation~(2.2), in \cite{gine+g:02} in a similar fashion as before. The main difference  lies in the variance term, which  follows from
 \begin{align*}
\lefteqn{\Var\left( L\big(h_{n,2}^{-1}({y-Y})\big)\, K_{}\big(h_{n,1}^{-1}({x-X})\big)\right)} \\
 &\leq  \int  L\big(h_{n,2}^{-1}({y-z_2})\big)^2\, K_{}\big(h_{n,1}^{-1}({x-z_1 })\big)^2\, f_{X,Y}(z_1, z_2)\, \diff (z_1, z_2)\\ 
 &= h_{n,1}h_{n,2}  \int L(u_2)^2\, K(u_1)^2 \, f_{X,Y}\big(x-h_{n,1}u_1, y-h_{n,2}u_2\big) \, \diff (u_1,u_2)\\
&\leq h_{n,1}h_{n,2} \, \|f_{X,Y}\|_\infty \int L(u_2)^2 K(u_1)^2\, \diff (u_1,u_2) =\sigma_n ^2   .
 \end{align*} 
 Computing the bound leads to the second assertion.

Using the second statement and the triangle inequality, the third assertion is obtained whenever
\begin{multline*}
  \sup_{(x,y)\in \R^2} 
  \abs{ 
    n^{-1} \sum_{i=1}^n 
    \left\{  
      E [L_{h_{n,2}}(y-Y_i) \mid X_i ]\, K_{h_{n,1}}(x-X_i) -
      E [L_{h_{n,2}}(y-Y)\, K_{h_{n,1}}(x-X)] 
    \right\} 
  } \\
= O_{\mathbb P}\left(\sqrt {\frac{\abs{\log h_{n,1}h_{n,2}}}{n h_{n,1}h_{n,2} }} \right).
\end{multline*}
If the class $\{z\mapsto E[L(h^{-1}({y-Y}))\mid X=z]\ :\ y\in \R,\ h>0\}$ is a bounded measurable VC class of functions, then the class $\{z\mapsto E [L(h_2^{-1}({y-Y})) \mid X=z]\, K(h_1^{-1}({x-z}))\ :\ x\in \R,\ y\in \R, \ h_1>0,\, h_2>0\}$ is still a bounded measurable VC class of functions \citep[Lemma 2.6.18]{wellner1996}. Consequently, we can apply Theorem 2.1, Equation (2.2) in \cite{gine+g:02}, with the same $\sigma_n^2$ as before, because, by Jensen's inequality,
\begin{align*}
  \Var \left(
    E \bigl[L\bigl(h_{n,2}^{-1}({y-Y})\bigr) \mid X \bigr] \,
    K\bigl(h_{n,1}^{-1}({x-X})\bigr)
  \right) 
  &\leq 
  E\left[
    E \bigl[ L\big(h_{n,2}^{-1}({y-Y})\big) \mid X \bigr]^2 \, 
    K\bigl(h_{n,1}^{-1}({x-X})\bigr)^2
  \right] \\
  &\leq 
  E \left[
    L\bigl(h_{n,2}^{-1}({y-Y})\bigr)^2\, K\bigl(h_{n,1}^{-1}({x-X})\bigr)^2 
  \right] \\
  &\leq \sigma_n^2.
\end{align*}
Now we show that $\mathcal L = \{z\mapsto E [L(h^{-1}({y-Y}))\mid X=z ]\ :\ y\in \R,\ h>0\}$ is a bounded measurable VC class of functions. Let $Q$ be a probability measure on $S_X$. Define $\tilde  Q$ as the probability measure given by
\begin{align*}
  \diff \tilde Q (y) = \int f(y|x)\, \diff Q(x) \, \diff y.
\end{align*}
Let $f_1,\ldots, f _N$ denote the  centers of an $\epsilon$-covering of the class $\mathcal L' = \{z\mapsto L(h^{-1}({y-z}))\ :\ y\in \R,\ h>0 \}$ with respect to the metric $L_2(\tilde Q) $. For a function $x\mapsto E[ f(Y) \mid X=x ]$, element of the space $ \mathcal F$, there exists $k\in \{1,\ldots,N\} $ such that, by Jensen's inequality and Fubini's theorem,
\begin{align*}
\int \left\{E\bigl[f (Y)\mid X=x\bigr]- E\bigl[f_k(Y)\mid X=x\bigr] \right\}^2 \, \diff Q(x)&\leq \int  E \bigl[ \bigl\{ f (Y)- f_k(Y)\bigr\} ^2\mid X=x\bigr] \, \diff Q(x)\\
&=\int \int \big \{ f(y)- f_k(y)\big\} ^2\, f(y|x) \, \diff y \, \diff Q(x)\\
&=  \int \big\{f(y) - f_k(y)\big \}^2 \, \diff \tilde Q(y)\leq \epsilon^2 .
\end{align*}
Consequently, 
\begin{align*}
  \mathcal N \bigl( \mathcal L, L_2(Q), \epsilon \bigr) 
  \leq 
  \mathcal N \bigl( \mathcal L', L_2(\tilde Q), \epsilon \bigr).
\end{align*}
Since the kernel $L$ is bounded, there exists a positive constant $c_\infty$ that is an envelope for both classes $\mathcal L$ and $\mathcal L'$. Note that $\|c_\infty \|_{L_2(Q)}=c_\infty$ for any probability measure $Q$. Using the fact that $\mathcal L'$ is bounded measurable VC (see the proof of the first assertion), it follows that
\begin{align*}
  \mathcal N \bigl( \mathcal L, L_2(Q), \epsilon c_\infty \bigr)
  \leq 
  \mathcal N \bigl( \mathcal L', L_2(\tilde Q), \epsilon c_\infty \bigr)
  \leq 
  \left(\frac{A}{\epsilon}\right)^{v},
\end{align*} 
i.e., the class $\mathcal L$ is bounded measurable VC. 
 
In the same spirit, the fourth assertion holds true whenever the class $ \{z\mapsto\varphi_h(y,z )\ :\ y\in \R,\ h>0\}$ is a bounded measurable VC class of functions. This is indeed true in virtue of Lemma~22, Assertion~(ii), in \cite{nolan+p:1987}, because each function $z\mapsto \varphi_h(y,z )$ of the class can be written as $z\mapsto \overline L( h^{-1}(y-z) ) $, where
\begin{align*}
\overline L(t)= \int_{-\infty}^t L(u)\, \diff u= \int_{-\infty}^t L^+(u)\, du+\int_{-\infty}^t L^-(u)\, \diff u,
\end{align*}
with $L=L^++L^- $, $L^+(u)\geq 0$, $L^-(u)<0$, which is indeed a bounded real function of bounded variation (as the sum of an increasing function and a decreasing function is of bounded variation). Applying Theorem~2.1, Equation~(2.2) in \cite{gine+g:02}, with $\sigma_n$ given by the upper bound in the inequality chain
\begin{align*}
\Var\left(\varphi_{h_{n,2}}(y,Y )\, K\big(h_{n,1}^{-1}(x-X)\big)\right) &\leq \int \varphi_{h_{n,2}}(y,z_2 )^2\, K\big(h_{n,1}^{-1}(x-z_1)\big)^2\, f_{X,Y}(z_1,z_2)\, \diff (z_1,z_2) \\
& \leq h_{n,1} \sup_{u\in [-1,1]} \{\overline L(u)^2\}\, \|f_X\|_\infty \int K(u)^2\, \diff u= \sigma_n^2 ,
\end{align*}
leads to the same rate as in the first assertion. 

Using the fourth statement and the triangle inequality, the fifth assertion is obtained whenever
\begin{multline*}
\sup_{(x,y)\in \R^2}\abs{ n^{-1} \sum_{i=1}^n \left\{  E [\varphi_{h_{n,2}}(y,Y_i )\mid X_i ] \,  K_{h_{n,1}}(x-X_i) -E [\varphi_{h_{n,2}}(y,Y ) \, K_{h_{n,1}}(x-X)] \right\}}
\\= O_{\mathbb P}\left(\sqrt {\frac{\abs{\log h_{n,1}} }{n {h_{n,1}} }} \right).
\end{multline*}
Following exactly the same lines as in the proof of the third statement [replacing $L(h^{-1}({y-Y}))$ by $\varphi_h(y,Y)$] we show that, since $ \{z\mapsto\varphi_h(y,z )\ :\ y\in \R,\ h>0\}$ is a bounded measurable VC class of functions (obtained in the proof of the fourth statement), the class $\{z\mapsto E[\varphi_h(y,Y)\mid X=z]\ :\ y\in \R,\ h>0 \}$ is a bounded measurable VC class of functions. Then the class $\{z\mapsto E [\varphi_{h_2}(y,Y ) \mid X=z]\, K(h_1^{-1}({x-z}))\ :\ x\in \R,\ y\in \R, \ h_1>0,\, h_2>0 \}$ is still a bounded measurable VC class of functions \citep[Lemma 2.6.18]{wellner1996}. We conclude by applying Theorem 2.1, Equation (2.2) in \cite{gine+g:02}, with the same $\sigma_n^2$ as before; note that Jensen's inequality gives  $E [\varphi_{h_{n,2}}(y,Y )\mid X=x ]^2\leq  E[\varphi_{h_{n,2}}(y,Y)^2\mid X=x]$. 
\end{proof}

\subsection{Proofs for Section~\ref{sec:loc_lin_results}}
\label{sec:proofs}

\subsubsection{Proof of Lemma \ref{lemma:positivedenominator}}

Recall that $\hat{p}_{n,k}(x) = n^{-1} \sum_{i=1}^n w_{k,h_{n,1}}(x - X_i)$ with $w_{k,h}(\,\cdot\,) = h^{-1} \, w_k(\,\cdot\,/h)$ and $w_{k} (u)= u^k \, K( u )$. We have
\[
  \expec\{\hat p_{n,k} (x) \} = \int f_X(x-h_{n,1}u) \, w_{k}(u)  \,\diff u.
\]
By Condition (G\ref{cond:kernel}), the function $K$ is a bounded function of bounded variation that vanishes outside $(-1,1)$. Applying Proposition~\ref{prop:concentration:kernel}, first statement, with kernel $u\mapsto w_{k} (u)$ and using Condition~(G\ref{cond:bandwidth}),  we have
\begin{equation*}
  \sup_{x\in \R} \abs{\hat p_{n,k}(x)-\mathbb   E\{\hat  p_{n,k}(x)\}}
  = o_{\mathbb P}(1), \qquad n \to \infty.
\end{equation*}
To obtain \eqref{eq:Pto1}, we can rely on the expectations of all  terms involved: it suffices to show that, for some $c>0$ and $n$ sufficiently large,  for all $x \in S_X$,
\begin{align}\label{eq:expec}
\expec \{ \hat p_{n,0}(x)\} \expec \{\hat p_{n,2}(x)\} - [\expec \{\hat p_{n,1}(x)\}]^2 \geq 2b^2c.
\end{align}
We have
\begin{align*}
  \expec \{ \hat p_{n,0}(x)\} \expec \{\hat p_{n,2}(x)\} - \expec \{\hat p_{n,1}(x)\}^2 
  &= \expec \{\hat  p_{n,0}(x)\} \int  f_X(x-h_{n,1}u)\, \bigl\{ u- a_n(x) \bigr\}^2\, K(u)\, \diff u,
\end{align*}
with $a_n(x) = \expec\{\hat  p_{n,1}(x)\} / \expec \{\hat  p_{n,0}(x)\}$. On the one hand, by (G\ref{cond:kernel}),  as $n$ is sufficiently large, we have, for all $x \in S_X$,
\begin{align*}
\expec\{\hat p_{n,0}(x)\} &= \int  f_X(x-h_{n,1}u)\, K(u)\, \diff u\\
&\geq b  \int _{x-h_{n,1}u\, \in S_X} K(u)\, \diff u\\
&\geq b\min\left( \int_{-1}^0 K(u) \, \diff u, \int_{0}^1 K(u) \, \diff u\right) 
= b/2,
\end{align*}
where the last equality is due to the symmetry of the function $K$. On the other hand, for $n$ large enough,  for all $x \in S_X$,
\begin{align*}
  \lefteqn{
    \int f_X(x-h_{n,1}u) \, \bigl\{ u- a_n(x) \bigr\}^2\, K(u) \, \diff u 
  } \\
  &\geq 
  b \int _{x-h_{n,1}u\, \in S_X} \bigl\{ u-  a_n(x) \bigr\}^2 \, K(u)\, \diff u \\
  &\geq 
  \min \left[ 
    \int_{-1}^0 \bigl\{ u-  a_n(x) \bigr\}^2 \, K(u) \, \diff u, \;
    \int_{0}^1 \bigl\{ u-  a_n(x) \bigr\}^2 \, K(u) \, \diff u
  \right] \\
  &=
  \min \left[
    \int_{0}^1 \bigl\{ u +  a_n(x) \bigr\}^2 \, K(u) \, \diff u, \;
    \int_{0}^1 \bigl\{ u -  a_n(x) \bigr\}^2 \, K(u) \, \diff u
  \right] \\
  &\geq  
  b \inf_{a \in \R} \int_0^1 \bigl\{ u -  a \bigr\}^2 \, K(u) \, \diff u 
  = 
  \frac{b\, c_K}{2},
\end{align*}
where $c_K = 2\int_0^1 (u-a_K)^2 \, K(u)\, \diff u $ and $a_K = 2\int_0^1 u \, K(u)\, \diff u$. By taking $c = c_K / 8$ we obtain \eqref{eq:expec}.  We have shown \eqref{eq:Pto1}.

By differentiating, we see that \eqref{eq:app:optimisationLL} is equivalent to a linear system whose determinant is equal to $   \hat p_{n,0}(x) \, \hat p_{n,2}(x)-  \hat p_{n,1}(x)^2  $. Because the latter is strictly positive with probability going to $1$, we can invert the linear system to obtain \eqref{formula:ll}.
\qed

\subsubsection{Proof of Proposition \ref{prop:uniformCVderivatives}}

Using Lemma~\ref{lemma:positivedenominator}, and since we are concerned with convergence in probability, we can restrict attention to the event  that $\hat p_{n,0}(x) \, \hat p_{n,2}(x)-\hat p_{n,1}(x)^2\geq b^2 c$ for every $x\in S_X$. Recall $\hat{Q}_{n,k}(y, x) = n^{-1} \sum_{i=1}^n \varphi_{h_{n,2}}(y, Y_i) \, w_{k,h_{n,1}}(x - X_i)$, where $\varphi_h$ is defined in \eqref{def:varphi} and $w_{k,h}$ is defined right before Lemma~\ref{lemma:positivedenominator}. It follows that
\begin{align*}
\hat F_n(y|x) &= \frac{\hat Q_{n,0}(y,x)\, \hat p_{n,2}(x) - \hat Q_{n,1}(y,x)\, \hat p_{n,1}(x)}{\hat p_{n,0}(x)\, \hat p_{n,2}(x)-\hat p_{n,1}(x)^2}\\
&= {\hat Q_{n,0}(y,x)\, \hat s_{n,2}(x) - \hat Q_{n,1}(y,x)\, \hat s_{n,1}(x)},
\end{align*}
with $\hat s_{n,k}(x)= \big(\hat p_{n,0}(x) \, \hat p_{n,2}(x)-\hat p_{n,1}(x)^2\big)^{-1} \hat p_{n,k}(x)$. Write
\[
  \hat Q_{n,k}(y,x)
  =
  \hat \beta_{n,k}(y,x) +  \hat b_{n,k}(y,x) + \hat \gamma_{n,k}(y,x),
\]
with 
\begin{align*}
&\hat \beta_{n,k}(y,x)= n^{-1} \sum_{i=1}^n\big\{ \varphi_{h_{n,2}}(y,Y_i ) -   E [\varphi_{h_{n,2}}(y,Y_{i} ) \mid X_i ] \big\} \,  w_{k,h_{n,1}}(x-X_i) ,\\
&\hat b_{n,k}(y,x) = n^{-1} \sum_{i=1}^n\big \{    E [\varphi_{h_{n,2}}(y,Y_{i} ) \mid X_i ] - F(y|x) -(X_i-x)\, \partial _x F(y|x) \big\} \,  w_{k,h_{n,1}}(x-X_i),\\
&\hat \gamma_{n,k}(y,x) = F(y|x)\, \hat p_{n,k}(x) - h_{n,1}\, \hat p_{n,k+1}(x)\, \partial_x F(y|x).
\end{align*} 
Elementary algebra yields
\begin{align*}
\hat \gamma_{n,0}(y,x)\, \hat s_{n,2}(x) - \hat \gamma_{n,1}(x)\,\hat s_{n,1}(x)= F(y|x).
\end{align*}
It follows that
\begin{align}
\hat F_n(y|x)  -  F(y|x)
&=  \{\hat \beta_{n,0}(y,x)+\hat b_{n,0}(y,x)\}\, \hat s_{n,2}(x) -\{\hat \beta_{n,1}(y,x)+\hat b_{n,1}(y,x)\}\, \hat s_{n,1}(x). \label{equation:decompuniform}
\end{align}
\longproof%
{%
The remainder of the proof now consists of multiple applications of Proposition~\ref{prop:concentration:kernel} to establish uniform rates of convergence for the derivatives of order  $l \in \{0, 1, 2\}$ of $\hat{\beta}_{n,k}$ and $\hat{b}_{n,k}$, for $k \in \{0, 1\}$, and of $\hat{s}_{n,k}$, for $k \in \{1, 2\}$. To do so, we rely on conditions (G1), (G3) and (G4). We need to distinguish between many situations because differentiating with respect to $x$ or $y$ does not impact the rates of convergence in the same way. In contrast, the index $k$ has no effect. The details are tedious and are given in full length in the arXiv version of the paper \citep{portier:segers:2017}. In view of \eqref{equation:decompuniform}, the rates of convergence for $\hat{F}_n(y|x)$ then follow. \qed
}%
{%
In what follows, we apply Proposition~\ref{prop:concentration:kernel} to establish uniform rates of convergence for the derivatives of order  $l \in \{0, 1, 2\}$ of $\hat{\beta}_{n,k}$ and $\hat{b}_{n,k}$, for $k \in \{0, 1\}$, and of $\hat{s}_{n,k}$, for $k \in \{1, 2\}$. We need to distinguish between many situations because differentiating with respect to $x$ or $y$ does not impact the rates of convergence in the same way. In contrast, the index $k$ has no effect. Consequently, in what follows we fix $k \in \{0, 1, 2\}$. We define 
\begin{align*}
  \hat r^{(l)}_{n,k}(x) &= n^{-1}\sum_{i=1}^n \abs{ w_{k,h_{n,1}}^{(l)} (x-X_i) }, &
  w_{k,h}^{(l)}(\,\cdot\,) &= h^{-1} \, w_{k}^{(l)}(\,\cdot\,/h),
 \end{align*} 
where $w_k^{(l)} : u\mapsto \partial_u^l w_k(u)$ for $l \in \{0,1,2\}$. All asymptotic statements are for $n \to \infty$, which is omitted for brevity.

\paragraph{Rate of $\hat r_{n,k}^{(l)}$ for $l\in \{0,1,2\}$.}
Invoking Condition~(G\ref{cond:kernel}), the functions $K$, $K'$ and $K''$ are bounded real functions of bounded variation that vanish outside $(-1,1)$. Consequently, the functions $w_k^{(l)}$, $l\in \{0,1,2\}$, are bounded functions of bounded variation that vanish outside $(-1,1)$. Note that the absolute value of a function of bounded variation is of bounded variation too, in view of the fact that $\bigl| \abs{a} - \abs{b} \bigr| \le \abs{a - b}$ for all $a, b \in \reals$. By Proposition~\ref{prop:concentration:kernel}, first assertion, with $K$ equal to $u\mapsto |w_k^{(l)}(u)|$, we therefore have
\begin{align*}
\sup_{x\in S_X} \left| \hat r^{(l)}_{n,k}(x) -\expec\{ \hat r^{(l)}_{n,k}(x)\}\right| = O_{\mathbb P}\left(\sqrt {\frac{\abs{\log h_{n,1}}}{n h_{n,1}^{} }} \right) . 
\end{align*}
By (G\ref{cond:smoothnessdensity1}) it holds that
\begin{align*}
\expec\{ \hat r^{(l)}_{n,k}(x)\} = \int f_X(x-h_{n,1}u) \, | w_{k}^{(l)} (u)|\,\diff u \leq M \int  | w_{k}^{(l)} (u)|\,\diff u.
\end{align*}
By the triangle inequality and Condition~(G\ref{cond:bandwidth}), it follows that, for $l\in \{0,1,2\}$,
\begin{align}
\label{rate:pnk2}
  \sup_{x \in S_X} \hat r^{(l)}_{n,k}(x)
  = 
  O_{\mathbb P} \left(
    \sqrt{ \frac{\abs{\log h_{n,1}}}{n h_{n,1}} } + 1 
  \right)
  =
  O_{\mathbb P} ( 1 ).
\end{align}

\paragraph{Rate of $ \partial^{l}_x \hat p_{n,k}$ for $l \in \{0, 1, 2\}$.}
Differentiating under the expectation [apply the dominated convergence theorem invoking (G\ref{cond:kernel})], we get for any $l \in \{0,1,2\}$,
\begin{align}
\label{eq:diffexpectation1}
  \partial_x^{l}\{\expec\hat p_{n,k}(x)\} & = \partial_x^{l}\left\{\int f_X(z)\,  w_{k,h_{n,1}}(x-z)\,\diff z\right\} = h_{n,1}^{-l} \int f_X(z) \, w_{k,h}^{(l)}(x-z)\,\diff z.
\end{align}
Similarly  [apply the dominated convergence theorem invoking~(G\ref{cond:smoothnessdensity1})], we have 
\begin{align}\label{eq:diffexpectation2}
\partial_x^{l}\{\expec\hat p_{n,k}(x)\} &= \partial_x^{l}\left\{\int f_X(x-h_{n,1}u) \, w_{k}(u)\,\diff u\right\} = \int f_X^{(l)}(x-h_{n,1}u)\,  w_{k}(u)\,\diff u.
\end{align}
As a consequence of~\eqref{eq:diffexpectation1}, we have 
\begin{align*}
\partial_x^{l}\{\hat p_{n,k}(x)-\expec\hat p_{n,k}(x) \} &= (nh_{n,1}^l)^{-1}\sum_{i=1}^n \left\{ w_{k,h_{n,1}}^{(l)}(x-X_i) -E[w_{k,h_{n,1}}^{(l)}(x-X)] \right\},
\end{align*}
and in view of Proposition \ref{prop:concentration:kernel}, first assertion, with $K$ equal to $u\mapsto w_k^{(l)}(u)$, it holds that
\begin{align*}
\sup_{x\in S_X} \abs{ \partial_x^{l}\{\hat p_{n,k}(x)-\expec\hat p_{n,k}(x) \}} &=O_{\mathbb P} \left(\sqrt {\frac{\abs{\log h_{n,1}}}{n h_{n,1}^{1+2l} }}\right).
\end{align*}
 Using \eqref{eq:diffexpectation2} and invoking  (G\ref{cond:smoothnessdensity1}), we have
\begin{align*}
 \abs{\partial_x^{l} \expec\{\hat p_{n,k}(x) \}} &\leq M \int \abs{w_{k}(u)} \,\diff u.
\end{align*} 
By the triangle inequality, it follows that, for $l\in \{0,1,2\}$,
 \begin{align}\label{rate:pnk}
\sup_{x\in S_X} \abs{\partial_x^{l}\hat p_{n,k}(x)}=O_{\mathbb P} \left(\sqrt {\frac{\abs{\log h_{n,1}}}{n h_{n,1}^{1+2l} }}+1 \right).
 \end{align}

\paragraph{Rate of $ \partial^{l}_x \hat s_{n,k}$ for $l \in \{0, 1, 2\}$.} 

We use the quotient rule for derivatives to obtain that
\begin{align*}
\sup_{x\in S_X} \abs{\partial_x^{l} \hat s_{n,k}(x)} = O_{\mathbb P}\left(\sqrt {\frac{\abs{\log h_{n,1}}}{n h_{n,1}^{1+2l} }} +1\right).
\end{align*}
For $l=0$, the previous formula follows from equation (\ref{rate:pnk}) because $\hat p_{n,0}(x) \, \hat p_{n,2}(x)-\hat p_{n,1}(x)^2\geq bc$ for every $x\in S_X$. For $l=1$, differentiating $\hat s_{n,k}(x)$ gives a sum of terms which are all of the form
\begin{align*}
\frac{ \partial^{}_x  \{\hat p_{n,k_1}(x)\} \, \hat p_{n,k_2}(x)\,  \hat p_{n,k_3}(x)}{\big(\hat p_{n,0}(x)\, \hat p_{n,2}(x)-\hat p_{n,1}(x)^2\big)^2  },
\end{align*}
where $k_1$, $k_2$, $k_3 \in \{0, 1, 2\}$. By~\eqref{rate:pnk}, each term is of the order  $O_{\mathbb P}(  \sqrt {{\abs{\log h_{n,1}}}/(n h_{n,1}^{3})}+1)$. Finally, when $l=2$, the order is given by differentiating the previous expression. We obtain a sum of different terms. Putting $\hat d_n(x) =\hat p_{n,0}(x)\hat p_{n,2}(x)-\hat p_{n,1}(x)^2$, these terms are of the form
 \begin{align*}
&\frac{ \partial^{l_1}_x  \{\hat p_{n,k_1}(x)\}\, \partial^{l_2}_x \{\hat p_{n,k_2}(x)\}\,  \hat p_{n,k_3}(x)}{\hat d_n(x)^2  }, \quad \text{with $l_1+l_2 = 2$},\\
\text{and}\qquad &\frac{\partial^{}_x\{\hat d_n(x)\}\, \partial^{}_x  \{\hat p_{n,k_1}(x)\} \, \hat p_{n,k_2}(x)\,  \hat p_{n,k_3}(x)   }{\hat d_n(x)^3  }.
\end{align*}
 By \eqref{rate:pnk}, the term with the highest order is 
\[ 
  \frac%
  { \partial^{2}_x  \{\hat p_{n,k_1}(x)\}\, \hat p_{n,k_2}(x)\, \hat p_{n,k_3}(x) }%
  {\hat d_n(x)^2  } 
  = O_{\mathbb P} \left(\sqrt {\frac{\abs{\log h_{n,1}}}{n h_{n,1}^{5}}} + 1 \right). 
\]

\paragraph{Rate of $ \partial^{l}_x \hat \beta_{n,k}$ for $l \in \{0, 1, 2\}$.}

Proposition~\ref{prop:concentration:kernel}, fifth assertion, with $K$ equal to $w_k^{(l)}$,  yields
\begin{align*}
\sup_{x\in S_X,\, y\in \R} \abs{\partial^{l}_x \hat \beta_{n,k}(y,x)} = O_{\mathbb P}\left(\sqrt {\frac{\abs{\log h_{n,1}}}{n h_{n,1}^{1+2l} }} \right) .
\end{align*}

\paragraph{Rate of $ \partial^{l}_x \hat b_{n,k}$ for $l \in \{0, 1, 2\}$.} 

Let $F_{n}(y|x) = \int F(y-h_{n,2}u|x)L(u)\, \diff u $. Fubini's theorem gives that $   E [\varphi_{h_{n,2}}(y,Y_{i} )\mid X_i=x ] =  F_{n}(y|x)$. Hence we consider
 \begin{align*}
\hat b_{n,k}(y,x) = \hat B_{n,1}^{(0)}(y,x) + \hat B_{n,2}^{(0)}(y,x),
\end{align*}
with 
 \begin{align*}
&\hat B_{n,1}^{(0)}(y,x) = n^{-1} \sum_{i=1}^n \{ F_{n}(y|X_i) - F(y|X_i) \}\,  w_{k,h_{n,1}}(x-X_i),\\
 & \hat B_{n,2}^{(0)}(y,x) =  n^{-1} \sum_{i=1}^n \{  F(y|X_i) -   F(y|x) -(X_i-x)\,  \partial _{x} F(y|x)    \} \,  w_{k,h_{n,1}}(x-X_i).
\end{align*}
Differentiating once with respect to $x$, we find
\begin{align*}
\partial _{x} \hat b_{n,k}(y,x) = \hat B_{n,1}^{(1)}(y,x) + \hat B_{n,2}^{(1)}(y,x)+\hat B_{n,3}^{(1)}(y,x) ,
\end{align*}
with 
 \begin{align*}
\hat B_{n,1}^{(1)}(y,x) &= n^{-1} \sum_{i=1}^n \{ F_{n}(y|X_i) - F(y|X_i) \}\,   \partial _x \{w_{k,h_{n,1}}(x-X_i)\},\\
\hat B_{n,2}^{(1)}(y,x) &=  n^{-1} \sum_{i=1}^n \{  F(y|X_i) -   F(y|x) - (X_i-x)\, \partial _{x} F(y|x)   \} \, \partial _x \{ w_{k,h_{n,1}}(x-X_i)\},\\
\hat B_{n,3}^{(1)}(y,x) &= h _{n,1}\, \partial _{x}^{2} F(y|x) \,      \hat p_{n,k+1}(x) .
\end{align*}
Differentiating twice with respect to $x$, we find
\begin{align*}
\partial _{x}^{2} \hat b_{n,k}(y,x) = \hat B_{n,1}^{(2)}(y,x) + \hat B_{n,2}^{(2)}(y,x)+\hat B_{n,3}^{(2)}(y,x)+\hat B_{n,4}^{(2)}(y,x) ,
\end{align*}
with 
\begin{align*}
  \hat B_{n,1}^{(2)}(y,x) 
  &= 
  n^{-1} \sum_{i=1}^n 
    \{ F_{n}(y|X_i) - F(y|X_i) \}\, 
    \partial _x^2 \{w_{k,h_{n,1}}(x-X_i)\}, 
  \\
  \hat B_{n,2}^{(2)}(y,x) 
  &= 
  n^{-1} \sum_{i=1}^n 
    \{  F(y|X_i) -   F(y|x) - (X_i-x)\,\partial _{x} F(y|x) \} \, 
    \partial _x^2 \{ w_{k,h_{n,1}}(x-X_i)\},
  \\
  \hat B_{n,3}^{(2)}(y,x) 
  &= 
  h_{n,1} \, \hat p_{n,k+1}(x) \, \partial _{x}^{3}  F(y|x),
  \\
  \hat B_{n,4}^{(2)}(y,x) 
  &= 
  h_{n,1}\, \partial _{x}^2 F(y|x)\, 
  \left( 
    n^{-1} \sum_{i=1}^n \{    (x-X_i)/h_{n,1}   \} \, \partial _x \{ w_{k,h_{n,1}}(x-X_i)\}
  \right) 
  \\
  & \qquad \mbox{} +  
 h_{n,1} \, \partial _x \{ \hat p_{n,k+1}(x) \} \, \partial _{x}^{2} F(y|x).
\end{align*}
All this results in the formula
\begin{align*}
\partial _{x}^{l} \hat b_{n,k}(y,x) = \hat B_{n,1}^{(l)}(y,x) + \hat B_{n,2}^{(l)}(y,x)+\hat B_{n,3}^{(l)}(y,x)+\hat B_{n,4}^{(l)}(y,x) ,
\end{align*}
with, for $l \in \{0,1,2\}$,
 \begin{align*}
\hat B_{n,1}^{(l)}(y,x) &= n^{-1} \sum_{i=1}^n \{ F_{n}(y|X_i) - F(y|X_i) \}\,   \partial _x^l \{w_{k,h_{n,1}}(x-X_i)\},\\
\hat B_{n,2}^{(l)}(y,x) &=  n^{-1} \sum_{i=1}^n \{  F(y|X_i) -   F(y|x) -(X_i-x)\, \partial _{x} F(y|x)    \} \, \partial _x^l \{ w_{k,h_{n,1}}(x-X_i)\},
\end{align*}
whereas $ \hat B_{n,3}^{(0)}(y,x) = 0$ and $\hat B_{n,3}^{(l)}(y,x) =h_{n,1}  \,    \hat p_{n,k+1}(x)\, \partial _{x}^{l+1} F(y|x)$ for $l \in \{1, 2\}$,  and $
 \hat B_{n,4}^{(0)}(y,x)= \hat B_{n,4}^{(1)}(y,x)  = 0$. Assumption~(G\ref{cond:smoothnessdensity1}) implies that
\begin{align*}
  \abs{ F(y+u|x)-F(y|x) - u\, \partial _y F(y|x) } \leq M \frac {u^{2}}{2}.
\end{align*}
By (G\ref{cond:kernel}) and \eqref{rate:pnk2}, since $\int u \, L(u) \, \diff u=0$, we have
 \begin{align*}
\lefteqn{
\sup_{x\in S_X,\, y\in\R}  \abs{ \hat B_{n,1}^{(l)}(y,x) }
} \\
&\leq  h_{n,1}^{-l}\sup_{x\in S_X} \{ \hat r_{n,k}^{(l)}(x)\} \sup_{x\in S_X,\, y\in \R} \left| F_{n}(y|x) - F(y|x) \right|   \\ 
 &= h_{n,1}^{-l} \sup_{x\in S_X} \{ \hat r_{n,k}^{(l)}(x)\} \sup_{x\in S_X,\, y\in \R} \left|  \int  \{F(y-h_{n,2}u|x) -F(y|x) + h_{n,2} u\, \partial _y F(y|x)\}\,  L(u)\, \diff u \right|  \\
 &\leq    h_{n,1}^{-l}h_{n,2}^2  \sup_{x\in S_X} \{ \hat r_{n,k}^{(l)}(x)\}\,\frac {M}{2} \int u^2L(u)\, \diff u\\
 &= O_{\mathbb P} \left(h_{n,1}^{-l}h_{n,2}^2\sqrt {\frac{\abs{\log  h_{n,1}}}{n  h_{n,1}^{} }}+h_{n,1}^{-l}h_{n,2}^2\right)\\
&= O_{\mathbb P} \left(h_{n,1}^{-l}h_{n,2}^2 \right) .
 \end{align*} 
By~(G\ref{cond:smoothnessdensity1}), one has  
\begin{align*}
 \abs{F(y|x+u)-F(y|x) - u\, \partial _x F(y|x)} \leq M \frac {u^{2}}{2}.
\end{align*}  
Using~\eqref{rate:pnk2}, it follows that
 \begin{align*}
\lefteqn{
\sup_{x\in S_X,\, y\in\R}  \abs{ \hat B_{n,2}^{(l)}(y,x) }
} \\
&\leq h_{n,1}^{-l} \sup_{x\in S_X} \{ \hat r_{n,k}^{(l)}(x)\}   \sup_{ \abs{x-\tilde x}<h_{n,1},\, y\in \R} \abs{ F(y|\tilde x) - F(y|x) -(\tilde x -x)\, \partial _{x} F(y|x) }   \\
 &= O_{\mathbb P} \left(h_{n,1}^{2-l}\sqrt {\frac{\abs{\log h_{n,1}}}{n h_{n,1}^{} }}+h_{n,1}^{2-l} \right)\\
 &= O_{\mathbb P} \left(h_{n,1}^{2-l} \right) .
 \end{align*}
Next, by \eqref{rate:pnk}, we have $\hat B_{n,3}^{(l)}(y,x) = O_{\mathbb P} (h_{n,1} \sqrt{\abs{\log h_{n,1}}/(n h_{n,1})}+h_{n,1}) = O_{\mathbb P} (h_{n,1})$ for $l \in \{1,2\}$. Finally, because $\partial_u\{u^{k+1}K(u)\} = u^k K(u)+ u \, \partial _u \{u^{k} K(u)\} $, we have
 \begin{align*}
 B_{n,4}^{(2)}(y,x) &= h_{n,1} \partial _{x}^2\{ F(y|x)\} \left( n^{-1} \sum_{i=1}^n \{    (x-X_i)/h_{n,1}   \} \ \partial _x \{ w_{k,h_{n,1}}(x-X_i)\}+       \partial _x \{\hat p_{n, k+1}(x)\}  \right)   \\ 
 &= h_{n,1} \partial _{x}^2 \{ F(y|x)\}\, \bigl( 2 \partial _x \{\hat p_{n, k+1}(x)\}   -\hat p_{n, k}(x) \bigr) \\
 &=O_{\mathbb P} \left(h_{n,1}\sqrt {\frac{\abs{\log h_{n,1}}}{n h_{n,1}^{3} }}+h_{n,1} \right)\\
 &= O_{\mathbb P} (h_{n,1}). 
 \end{align*} 
Putting all this together yields 
\begin{align*}
\sup_{x\in S_X,\, y\in \R} \abs{ \partial _x^{l}\hat b_{n,k}(y,x)} = O_{\mathbb P} \left(h_{n,1}^{2-l} +h_{n,1}^{-l}h_{n,2}^2 \right),\qquad l \in \{0, 1, 2\}.
\end{align*}

\paragraph{Rate of $\partial^{l_1}_y \partial^{l_2}_x \hat \beta_{n,k}$ for $(l_1,l_2)\in  \{ (1,0), \, (1,1),\, (2,0) \}$.} 

Start by differentiating under the expectation to obtain
\begin{align*}
\partial^{l_1}_y \partial^{l_2}_x \hat \beta_{n,k} &=  n^{-1} \sum_{i=1}^n \left( \partial^{l_1}_y\{\varphi_{h_{n,2}}(y,Y_i )\} -  E [\partial^{l_1}_y\{\varphi_{h_{n,2}}(y,Y_{i} )\} \mid X_i ] \right) \,  \partial^{l_2}_x\{ w_{k,h_{n,1}}(x-X_i) \}\\
&= (nh_{n,1}^{l_2}h_{n,2}^{l_1-1})^{-1} \sum_{i=1}^n \left( L_{h_{n,2}}^{(l_1-1)}(y-Y_i ) -  E [L_{h_{n,2}}^{(l_1-1)}(y-Y_{i} ) \mid X_i ] \right) \,   w_{k,h_{n,1}}^{(l_2)}(x-X_i) ,
\end{align*}
with $L_{h}^{(l)}(u) = h^{-1}L^{(l)}(u/h) $. By Condition~(G\ref{cond:kernel}), the functions $L$ and $L^{(1)}$ are bounded functions of bounded variation. Applying Proposition~\ref{prop:concentration:kernel}, third assertion, with $L$ equal to $ L^{(l_1-1)} $ and $K$ equal to $w_{k}^{(l_2)}$, for $(l_1,l_2)\in  \{ (1,0), \, (1,1),\, (2,0) \}$, gives
\begin{align*}
\sup_{x\in S_X,\, y\in \R} \abs{ \partial^{l_1}_y \partial^{l_2}_x \hat \beta_{n,k}(y,x) } = O_{\mathbb P}\left(\sqrt {\frac{\abs{\log h_{n,1}h_{n,2}}}{n h_{n,1}^{1+2 l_2 }h_{n,2}^{1+2(l_1-1) } }} \right).
\end{align*}

\paragraph{Rate of $\partial^{l_1}_y \partial^{l_2}_x \hat b_{n,k}$ for $(l_1,l_2)\in  \{ (1,0), \, (1,1),\, (2,0) \}$.} 
We mimic here the approach taken when treating $\partial^{l}_x \hat b_{n,k}$. In the following, terms denoted by $\hat B_{n}^{(l_1,l_2)}(y,x)$ are related to the derivatives of $\hat b_{n,k}$ of order $l_1$ (resp.\ $l_2$) with respect to $y$ (resp.\ $x$).  Differentiating $l_1$ times with respect to $y$ produces
\begin{align*}
 \partial _{y}^{l_1}  \hat b_{n,k}(y,x) = \hat B_{n,1}^{(l_1,0)}(y,x) + \hat B_{n,2}^{(l_1,0)}(y,x),
\end{align*}
with
 \begin{align*}
&\hat B_{n,1}^{(l_1,0)}(y,x) = n^{-1} \sum_{i=1}^n \big\{ \partial _y^{l_1}F_{n}(y|X_i) - \partial _y^{l_1} F(y|X_i) \big\}\, w_{k,h_{n,1}}(x-X_i),\\
 & \hat B_{n,2}^{(l_1,0)}(y,x) =  n^{-1} \sum_{i=1}^n \big\{  \partial _y^{l_1} F(y|X_i) - \partial _y^{l_1}  F(y|x) - (X_i-x)  \, \partial _y^{l_1} \partial _{x} F(y|x)  \big\} \, w_{k,h_{n,1}}(x-X_i).
\end{align*}
Differentiating  with respect to $x$ gives
\begin{align*}
\partial _{x}^{}  \partial _{y}^{}  \hat b_{n,k}(y,x) = \hat B_{n,1}^{(1,1)}(y,x) + \hat B_{n,2}^{(1,1)}(y,x)+ \hat B_{n,3}^{(1,1)}(y,x),
\end{align*}
with
 \begin{align*}
 \hat B_{n,1}^{(1,1)}(y,x) &= n^{-1} \sum_{i=1}^n \big\{ \partial _y^{}F_{n}(y|X_i) - \partial _y^{} F(y|X_i) \big\}\, \partial _{x}^{}  \{ w_{k,h_{n,1}}(x-X_i)\},\\
 \hat B_{n,2}^{(1,1)}(y,x) &=  n^{-1} \sum_{i=1}^n \big\{  \partial _y^{} F(y|X_i) - \partial _y^{}  F(y|x) -(X_i-x)\, \partial _y^{} \partial _{x} F(y|x)    \big\}\,   \partial _{x}^{}  \{  w_{k,h_{n,1}}(x-X_i)\},\\
 \hat B_{n,3}^{(1,1)}(y) &= h_{n,1}  \,  \hat p_{n,k+1}(x) \, \partial _y \partial _{x}^{2} F(y|x).
\end{align*}
By (G\ref{cond:smoothnessdensity1}) we have
\begin{align*}
\abs{\partial_y^{} F(y+u|x)-\partial_y^{}F(y|x) - u\, \partial _y^2 F(y|x)} \leq \frac {Mu^{2}}{2}.
\end{align*}  
By (G\ref{cond:kernel}) and \eqref{rate:pnk2}, since $\int u \, L(u) \, \diff u = 0$, we have
 \begin{align*}
\lefteqn{\sup_{x\in S_X,\, y\in \R} \abs{\hat B_{n,1}^{(1,l_2)}(y,x)}} \\
&\leq h_{n,1}^{-l_2} \sup_{x\in S_X} \{ \hat r_{n,k}^{(l_2)}(x)\} \, \sup_{x\in S_X,\, y\in \R} \left | \partial _y^{}F_{n}(y|x) - \partial _y^{} F(y|x) \right|   \\ 
 &= h_{n,1}^{-l_2}  \sup_{x\in S_X} \{ \hat r_{n,k}^{(l_2)}(x)\}  \sup_{x\in S_X,\, y\in \R} \left|  \int  \{\partial _y^{}F(y-h_{n,2}u|x) -\partial _y^{}F(y|x) + h_{n,2}u \, \partial _y^{2} F(y|x)\}\,  L(u)\, \diff u\right|  \\
 &\leq  h_{n,2}^{2}h_{n,1}^{-l_2}  \sup_{x\in S_X} \{ \hat r_{n,k}^{(l_2)}(x)\}  \, \frac {M}{2} \int u^2L(u)\, \diff u\\
 &=O_{\mathbb P} \left(h_{n,2}^{2}h_{n,1}^{-l_2} \sqrt {\frac{\abs{\log h_{n,1}}}{n h_{n,1}^{} }}+h_{n,2}^{2}h_{n,1}^{-l_2} \right)\\
 &= O_{\mathbb P} \left(h_{n,2}^{2} h_{n,1}^{-l_2}\right).
 \end{align*}
By (G\ref{cond:smoothnessdensity1}) we have
\begin{align*}
&\abs{\partial_y^{2} F(y+u|x)-\partial_y^{2}F(y|x) -u\, \partial_y^{3}F(y|x)} \leq  M \frac{\abs{u}^{1+\delta}}{1+\delta}.
\end{align*}  
Then, by \eqref{rate:pnk2}, it holds that
 \begin{align*}
\lefteqn{\sup_{x\in S_X,\, y\in \R}   \abs{ \hat B_{n,1}^{(2,0)}(y,x) }}\\
&\leq  \sup_{x\in S_X} \{ \hat r_{n,k}^{}(x)\}   \sup_{x\in S_X,\, y\in \R} \left| \partial _y^{2}F_{n}(y|x) - \partial _y^{2} F(y|x) \right|   \\ 
 &=  \sup_{x\in S_X} \{ \hat r_{n,k}^{}(x)\}  \sup_{x\in S_X,\, y\in \R} \left|  \int  \{\partial _y^{2}F(y-h_{n,2}u|x) -\partial _y^{2}F(y|x) +h_{n,2}u\, \partial_y^{3}F(y|x) \} \,  L(u)\, \diff u\right|  \\
 &\leq  h_{n,2}^{1+\delta} \sup_{x\in S_X} \{ \hat r_{n,k}^{}(x)\}  \,  \frac M {1+\delta} \int |u|^{1+\delta} L(u)\, \diff u\\
 &=O_{\mathbb P} \left(h_{n,2}^{1+\delta} \sqrt {\frac{\abs{\log h_{n,1}}}{n h_{n,1}^{} }}+h_{n,2}^{1+\delta} \right)\\
  &=O_{\mathbb P} \left(h_{n,2}^{1+\delta} \right).
 \end{align*}
By (G\ref{cond:smoothnessdensity1}), one has  
\begin{align*}
&\big|\partial _y F(y|x+u)-\partial _y  F(y|x) - u\, \partial _y  \partial _x F(y|x)\big| \leq M \frac {u^{2}}{2}.
\end{align*}  
Using  \eqref{rate:pnk2}, it follows that
 \begin{align*}
\lefteqn{\sup_{x\in S_X,\, y\in\R}  \abs{\hat B_{n,2}^{(1,l_2)}(y,x) }}\\
& \leq h_{n,1}^{-l_2}  \sup_{x\in S_X} \{ \hat r_{n,k}^{(l_2)}(x)\}  \sup_{ |x-\tilde x|<h_{n,1},\, y\in \R} \abs{ \partial _y F(y|\tilde x) - \partial _y F(y|x) -(\tilde x -x)\, \partial _y \partial _{x} F(y|x) } \\
 & = O_{\mathbb P} \left(h_{n,1}^{2-l_2}  \sqrt {\frac{\abs{\log h_{n,1}}}{n h_{n,1}^{} }}+h_{n,1}^{2-l_2} \right)\\
 &= O_{\mathbb P} \left(h_{n,1}^{2-l_2}  \right).
 \end{align*}
By~(G\ref{cond:smoothnessdensity1}), one has  
\begin{align*}
&\abs{\partial _y^2 F(y|x+u)-\partial _y^2  F(y|x) - u\, \partial _y^2  \partial _x F(y|x)} \leq M \frac {|u|^{1+\delta}}{1+\delta}.
\end{align*}  
Using~\eqref{rate:pnk2}, it then follows that
 \begin{align*}
\lefteqn{\sup_{x\in S_X,\, y\in\R}  \abs{\hat B_{n,2}^{(2,0)}(y,x)}}\\
& \leq \sup_{x\in S_X} \{ \hat r_{n,k}^{}(x)\}  \sup_{|x-\tilde x|<h_{n,1},\, y\in \R} \abs{ \partial _y^2 F(y|\tilde x) - \partial _y^2 F(y|x) - (\tilde x -x)\, \partial _y^2 \partial _{x} F(y|x) }  \\
 & = O_{\mathbb P} \left(h_{n,1}^{1+\delta} \sqrt {\frac{\abs{\log h_{n,1}}}{n h_{n,1}^{} }}+h_{n,1}^{1+\delta} \right)\\
  & = O_{\mathbb P} \left(h_{n,1}^{1+\delta}  \right).
 \end{align*}
Finally, by~\eqref{rate:pnk}, we have $ \hat B_{n,3}^{(1,1)}(y) = O_{\mathbb P} (h_{n,1}\sqrt {{\abs{\log h_{n,1}}}/(n h_{n,1})}+h_{n,1})=O_{\mathbb P} (h_{n,1} )$. Putting all this together yields 
\begin{align*}
&\sup_{x\in S_X,\, y\in \R} \abs{ \partial _{y}^{}  \hat b_{n,k}(y,x) } =O_{\mathbb P} \left(h_{n,1}^{2} +h_{n,2}^2\right), \\
&\sup_{x\in S_X,\, y\in \R} \abs{ \partial _{y}^{2}  \hat b_{n,k}(y,x)} =O_{\mathbb P} \left(h_{n,1}^{1+\delta} + h_{n,2}^{1+\delta}\right),\\
&\sup_{x\in S_X,\, y\in \R} \abs{ \partial _{y}^{}\partial _{x}^{}  \hat b_{n,k}(y,x) } =O_{\mathbb P} \left(h_{n,2}^2h_{n,1}^{-1}+h_{n,1}\right).
\end{align*}

\paragraph{Back to equation (\ref{equation:decompuniform}).}
So far, we have obtained uniform convergence rates in probability for $\hat \beta_{n,k}$, $\hat b_{n,k}$ and $\hat s_{n,k}$ (with $k \in \{0, 1\}$ for $\hat \beta_{n,k}$ and $\hat b_{n,k}$ and $k \in \{1, 2\}$ for $\hat s_{n,k}$), and their first and second-order partial derivatives. In combination with equation~\eqref{equation:decompuniform} and hypothesis~(H3), these yield the convergence rates for $\hat F_n(y|x)  -  F(y|x)$ stated in the proposition. For the benefit of the reader, we provide here the details. For $l \in \{0, 1, 2\}$, we have, since $\abs{ \log h_{n,1} } / (nh_{n,1}^3) = o(1)$ by (G\ref{cond:bandwidth}),
\begin{align*}
  \sup_{x \in S_X} \abs{ \partial_x^l \hat{s}_{n,k}(x) } 
  &= 
  \begin{cases}
    O_{\prob}(1) & \text{if $l \in \{0, 1\}$,} \\
    O_{\prob}\left( \sqrt{ \frac{\abs{\log h_{n_1}}}{n h_{n_1}^5} } + 1 \right) & \text{if $l = 2$,}
  \end{cases}
  \\
 \sup_{x \in S_X, y \in \R} \abs{ \partial_x^l \hat{\beta}_{n,k}(y, x) }
  &=
  O_{\prob} \left( \sqrt{ \frac{ \abs{ \log h_{n_1}} }{ n h_{n_1}^{1+2l} } } \right), \\
  \sup_{x \in S_X, y \in \R} \abs{ \partial_x^l \hat{b}_{n,k}(y, x) }
  &=  O_{\mathbb P} \left(h_{n,1}^{2-l} +h_{n,1}^{-l}h_{n,2}^2 \right).
\end{align*}
Moreover,
\begin{align*}
  \sup_{x \in S_X,\ y \in \R} \abs{ \partial_y \hat{b}_{n,k}(x, y) }
  &=
  O_{\prob}(h_{n,1}^2+h_{n,2}^2), \\
  \sup_{x \in S_X,\ y \in \R} \abs{ \partial^2_y \hat{b}_{n,k}(x, y) }
  &=
  O_{\mathbb P} \left(h_{n,1}^{1+\delta} + h_{n,2}^{1+\delta}\right), \\
  \sup_{x \in S_X,\ y \in \R} \abs{ \partial_y \partial_x \hat{b}_{n,k}(x, y) }
  &=
O_{\mathbb P} \left(h_{n,2}^2h_{n,1}^{-1}+h_{n,1}\right).
\end{align*}
Finally, for $(l_1, l_2)$ equal to $(1, 0)$, $(1, 1)$ or $(2, 0)$,
\begin{align*}
  \sup_{x \in S_X,\ y \in \R} \abs{ \partial_y^{l_1} \partial_x^{l_2} \hat{\beta}_{n,k}(y, x) }
  &=
  O_{\mathbb P}\left(\sqrt {\frac{\abs{\log h_{n,1}h_{n,2}}}{n h_{n,1}^{1+2 l_2 }h_{n,2}^{1+2(l_1-1) } }} \right).
\end{align*}
Using equation \eqref{equation:decompuniform}, we find the following uniform convergence rates for $\hat F_n(y|x) - F(y|x)$ and its derivatives. First, differentiating both sides of equation~\eqref{equation:decompuniform} $l \in \{0, 1, 2\}$ times with respect to $x$, we have, using $\abs{ \log h_{n,1} } / (nh_{n,1}^3) = o(1)$,
\[
  \sup_{x \in S_X,\ y \in \R} \abs{ \partial_x^l \hat F_n(y|x) - \partial_x^l F(y|x) }
  =
  O_{\prob} \left( \sqrt{ \frac{ \abs{ \log h_{n,1} } }{ n h_{n,1}^{1+2l} } } + h_{n,1}^{2-l} +h_{n,1}^{-l}h_{n,2}^2\right).
\]
Second, differentiating once or twice with respect to $y$, we find
\begin{align*}
  \sup_{x \in S_X,\ y \in \R} \abs{ \partial_y \hat F_n(y|x) - \partial_y F(y|x) }
  &=
  O_{\prob}  \left( \sqrt {\frac{\abs{\log h_{n,1}h_{n,2}}}{n h_{n,1}h_{n,2} }} +h_{n,1}^{2}+h_{n,2}^{2} \right), \\
  \sup_{x \in S_X,\ y \in \R} \abs{ \partial_y^2 \hat F_n(y|x) - \partial_y^2 F(y|x) }
  &=
  O_{\prob}  \left( \sqrt {\frac{\abs{\log h_{n,1}h_{n,2}}}{n h_{n,1}h_{n,2}^3 }} +h_{n,1}^{1+\delta} + h_{n,2}^{1+\delta}\right).
\end{align*}
Finally, the rate for the mixed second-order partial derivative is
\[
  \sup_{x \in S_X,\ y \in \R} \abs{ \partial_y \partial_x \hat F_n(y|x) - \partial_y \partial_x F(y|x) }
  =
  O_{\prob}  \left( \sqrt {\frac{\abs{\log h_{n,1}h_{n,2}}}{n h_{n,1}^3h_{n,2} }} +h_{n,2}^2h_{n,1}^{-1}+h_{n,1} \right).
\]
This completes the proof of the proposition. \qed
}

\subsubsection{Proof of Proposition \ref{prop:quantiletransformunif}}

The first statement is a direct consequence of Fact \ref{prel:consistency_inverse}. On the event corresponding to $E_{n,j,1}$, by the mean-value theorem, we find
\begin{align*}
 \abs{\hat  F_{n} ^- ( u|x )  -  F^-(u|x) }
  &= \abs{  F^-\bigl( F \bigl( \hat  F_{n} ^- ( u|x )|x \bigr) \mid x \bigr)  -  F^-(u|x) }\nonumber\\
  & \leq \frac 1 b  \abs{  F\bigl(\hat  F_{n} ^- ( u|x )|x\bigr)  - u }.
\end{align*}
Conclude using the first statement of the proposition to obtain a uniform rate of convergence  of $ \sqrt{{\abs{\log h_{n,1}}}/{n {h_{n,1}} }} +{h_{n,1}^{2}}+{h_{n,2}^{2}}$. \qed

\subsubsection{Proof of Proposition \ref{prop:regularity}}

Recall that $ \delta_1= \min(\alpha/2, \delta)$. We establish the occurrence with high probability of the following uniform bounds: for certain constants $M_1'$ and $M_1''$, to be determined later, 
\begin{align}
&\sup_{x\in S_X,\, u\in [\gamma,1-\gamma] } \abs{\partial_x \hat F_{n}^- (u  |x) } \leq M_1', \label{ineq:reg1} \\
&\sup_{x\neq x',\, u\in [\gamma,1-\gamma] } \frac{\abs{ \partial_x \hat F^-_n (u  |x)-\partial_x \hat F^-_n (u  |x')} }{|x-x'|^{\delta_1}} \leq M_1''  \label{ineq:reg2}.
\end{align}
The constant $M_1$ of the statement will be taken equal to the maximum between $M_1'$ and $M_1''$. 

By Lemma \ref{lemma:FoF^-}, we have, almost surely, $\hat F_n\big(\hat F_n^-(u|x)|x\big)=u$ for $u\in[\gamma,1-\gamma]$. Differentiating both sides of this equality with respect to $x$ produces the identity
\begin{align*}
\partial_x \hat F^-_n (u  |x) = -\left.\frac{\partial _x \hat F_n (y |x) }{\hat f _n (y|x) }\right|_{y = \hat F^- _n(u  |x) }.
\end{align*}
Using (G\ref{cond:smoothnessdensity1}), we have
\begin{align*}
&\abs{\hat f _n \big( \hat F_n^- (u  |x)|x\big)  - f \big( F^- (u  |x)|x\big)}\\
& \leq  \abs{\hat f_n  \big( \hat F_n^- (u  |x)|x\big)  - f \big( \hat F_n^- (u  |x)|x\big)}+ M \abs{   \hat F_n^- (u  |x)  -   F^- (u  |x)}\\
&\leq \sup_{x'\in S_X,\, y\in \R} \abs{ \hat f_n  ( y|x')  - f (y|x')}+ M \sup_{x'\in S_X,\, u\in [\gamma,1-\gamma]}\abs{  \hat F_n^- (u  |x')  -   F^- (u  |x')}.
\end{align*}
Invoking Proposition~\ref{prop:uniformCVderivatives} (using  $n h_{n,1}^2 / \abs{\log h_{n,1}} \to \infty$) and Proposition~\ref{prop:quantiletransformunif}, we get
\begin{align}\label{cv:denom}
&\sup_{x\in S_X,\, u\in [\gamma,1-\gamma] } \abs{\hat f _n \big( \hat F_n^- (u  |x)|x\big)  - f \big( F^- (u  |x)|x\big)}  =o_{\mathbb P}(1).
\end{align}
Now we introduce  $\hat g_n(y,x) = \partial_x \hat F_n(y|x)$ and $g(y,x) = \partial_x  F(y|x)$. In a similar way, invoking Proposition~\ref{prop:uniformCVderivatives} (using  $n h_{n,1}^{3} / \abs{\log h_{n,1}} \to \infty$ and $h_{n,1}^{-1}h_{n,2}^2\to 0$) and Proposition~\ref{prop:quantiletransformunif}, we find
\begin{align}\label{cv:num}
&\sup_{x\in S_X,\, u\in [\gamma,1-\gamma] } \abs{  \hat g_n \big(\hat F_n^- (u  |x) , x\big)  - g \big(F^- (u  |x),x\big)   } = o_{\mathbb P}(1).
\end{align}
It follows that, with probability going to $1$, 
\begin{align*}
\sup_{x\in S_X,\, u\in [\gamma,1-\gamma] } \abs{ \partial_x \hat F_{n}^- (u  |x) } \leq \frac{2M}{b_\gamma/2}.
\end{align*}
Hence we have shown \eqref{ineq:reg1} with $M_1' = 2M / (b_\gamma/2)$. To show \eqref{ineq:reg2}, we start by proving that \eqref{ineq:reg2} holds true whenever the maps $(y ,x)\mapsto \partial_x \hat F_{n} (y  |x)$ and $(y ,x)\mapsto \hat f_{n} (y  |x)$ are both $\delta_1$-H\"older with constant $2M$, i.e., when
\begin{align}
&\sup_{x\neq x',\, y\neq y' } \frac{\abs{  \partial_x \hat F_n (y  |x)-\partial_x \hat F_n (y'  |x')}}{|(y-y',x-x')|^{\delta_1}} \leq 2 M , \label{ineq:holder1}\\
&\sup_{x\neq x',\, y\neq y' }\frac{ \abs{   \hat f_n (y  |x)-\hat f_n (y'  |x')} }{|(y-y',x-x')|^{\delta_1}} \leq 2 M \label{ineq:holder2} .
\end{align}
 Abbreviate $\hat F^-_n (u  |x) $  to $ \xi_{u,x}$, and write
\begin{align*}
&\partial_x \hat F^-_n (u  |x)-\partial_x \hat F^-_n (u  |x') \\
&=\frac{g( \xi_{u,x'},x') - g ( \xi_{u,x} ,x)  }{\hat f _n ( \xi_{u,x'} |x') } + \frac{ g ( \xi_{u,x} ,x)  }{\hat f _n ( \xi_{u,x'}|x') \hat f _n ( \xi_{u,x}|x) } \bigl( \hat f _n ( \xi_{u,x}|x) -\hat f _n (\xi_{u,x'}|x') \bigr).
\end{align*}
Use \eqref{cv:denom} and \eqref{cv:num} to obtain, with probability going to $1$,
\begin{align*}
\abs{ \partial_x \hat F^-_n (u  |x)-\partial_x \hat F^-_n (u  |x')} \leq \frac{1}{b_\gamma/2} \abs{ g( \xi_{u,x'},x') - g ( \xi_{u,x} ,x) } + \frac{ 2 M }{(b_\gamma/2)^2 }\abs{ \hat f _n ( \xi_{u,x}|x) -\hat f _n (\xi_{u,x'}|x')}.
\end{align*}
Use the H\"older properties \eqref{ineq:holder1} and \eqref{ineq:holder2}, then the equivalence between $l_p$ norms, $p>0$, and finally \eqref{ineq:reg1}, to get
\begin{align*}
\abs{ \partial_x \hat F^-_n (u  |x)-\partial_x \hat F^-_n (u  |x')} &\leq \frac{2M}{b_\gamma/2} |( \xi_{u,x'}-\xi_{u,x} ,x' -x) |^{\delta_1} + \frac{ (2 M)^2 }{(b_\gamma/2)^2 }|^{} ( \xi_{u,x}-\xi_{u,x'} ,x -x')|^{\delta_1}\\
&= \frac{2M}{b_\gamma/2}\left(1+\frac{2M}{b_\gamma/2}\right) |( \xi_{u,x'}-\xi_{u,x} ,x' -x) |^{\delta_1}\\
&\leq \frac{2M}{b_\gamma/2}\left(1+\frac{2M}{b_\gamma/2}\right) M_{\delta_1} \left( | \xi_{u,x'}-\xi_{u,x}|^{\delta_1}  + |  x' -x |^{\delta_1}\right)\\
&\leq  \frac{2M}{b_\gamma/2}\left(1+\frac{2M}{b_\gamma/2}\right)^2 M_{\delta_1}   |  x' -x |^{\delta_1} ,
\end{align*}
for some positive constant $M_{\delta_1}$.

To terminate the proof, we just need to show \eqref{ineq:holder1} and \eqref{ineq:holder2}. We only focus on the former because the treatment of the latter results in the same approach involving slightly weaker conditions. Because the function spaces $\mathcal C_{s,M}(S_X)$, $s\in \mathbb R$, are decreasing sets in $s$, (G\ref{cond:smoothnessdensity1}) still holds with  $\delta_1= \min(\alpha/2, \delta)$ in place of $\delta$. We shall apply Proposition \ref{prop:uniformCVderivatives} with $\delta_1$ in place of $\delta$. Write $\hat m_n(y ,x) =\partial_x \{\hat F_{n} (y  |x)- F (y  |x)\}$. We distinguish between the case that $|(y - y', x- x')|$ is smaller than $h_{n,1}$ or not.

\begin{itemize}
\item  First, suppose $| (y -y ',x-x')|\leq h_{n,1}$. By the mean-value theorem and because of the rates associated to $\partial_{x}^2$ and $\partial_x\partial_y$ in Proposition \ref{prop:uniformCVderivatives}, we have
\begin{align*}
&\frac{\abs{ \hat m_n (y  ,x) -\hat m_n (y ' ,x') }}{| (y-y ',x-x')|^{\delta_1 }}\\
&\leq \sup_{x\in S_X,\, y\in \R} \abs{ \nabla \{\hat m_n (y ,x)\} } \, | (y-y ',x-x')|^{1-\delta_1}\\
&  \leq \sup_{x\in S_X,\, y\in \R}  \abs{ \nabla \{\hat m_n (y ,x)\}  } \, h_{n,1}^{1-\delta_1}  \\
&= O_{\mathbb P}\left( \sqrt{\frac{|\log h_{n,1}|}{nh_{n,1}^{3+2\delta_1}}}+ h_{n,1}^{1-\delta_1}+ h_{n,1}^{-1-\delta_1}h_{n,2}^2 +\sqrt{\frac{|\log h_{n,1}h_{n,2}|}{nh_{n,1}^{1+2\delta_1}h_{n,2}}} +h_{n,1}^{2-\delta_1}+h_{n,1}^{1-\delta_1} h_{n,2}^2\right).
\end{align*}
Because $\delta_1= \min(\alpha/2, \delta)$, (G\ref{cond:bandwidth2}'') still holds with $2\delta_1$ in place of $\alpha$. This implies that the previous bound goes to $0$.
\item  Second, suppose $| (y -y ',x-x')|>h_{n,1}$. Using the rates associated to $\partial_{x}$ in Proposition~\ref{prop:uniformCVderivatives}, we find
\begin{align*}
\frac{\abs{  \hat m_n(y ,x) -\hat m_n(y ',x') } }{| (y -y ',x-x')|^{\delta_1}}&\leq 2\sup_{x\in S_X,\, y\in \R} \abs{  \hat m_n(y ,x) }  h_{n,1}^{-\delta_1} \\
&= O_{\mathbb P}\left( \sqrt{\frac{|\log h_{n,1}|}{nh_{n,1}^{3+2\delta_1}}} + h_{n,1}^{1-\delta_1}+ h_{n,1}^{-1-\delta_1}h_{n,2}^2\right),
\end{align*}
which also goes to $0$ by (G\ref{cond:bandwidth2}'').
\end{itemize}

As a consequence of (G\ref{cond:bandwidth1bis}'), the previous bounds convergence to $0$. Now because $\partial _x F $ belongs to $\mathcal C_{\delta_1,M}(\R\times S_X)$, we have, with probability going to $1$,
\begin{align*}
 \frac{\abs{ \partial_x \hat F_n (y  |x)-\partial_x \hat F_n (y'  |x') } }{| (y -y ',x-x')|^{\delta_1 }} &\leq \frac{ \abs{  \hat m_n (y  |x)- \hat m_n (y'  |x')} }{| (y -y ',x-x')|^{\delta_1 }}+ \frac{\abs{ \partial_x F (y  |x)-\partial_x  F(y'  |x')}}{| (y -y ',x-x')|^{\delta_1 }} \\
 &\leq 2M,
\end{align*}
as required. \qed

\section{Analytical results}
\begin{lemma}
\label{lem:ddotC}
If $C$ is a bivariate copula satisfying~(G\ref{cond:copula_smoothness}), then for all $\gamma \in (0, 1/2)$, all $\bu \in [\gamma, 1-\gamma]^2$ and all $\bv \in [0, 1]^2$,
\[
  \abs{ C( \bv ) - C( \bu ) - \sum_{j=1}^2 (v_j - u_j) \, \dot{C}_j( \bu ) }
  \le  \frac{4\kappa}{\gamma} \sum_{j=1}^2 (v_j - u_j)^2.
\]
\end{lemma}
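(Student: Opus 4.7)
The plan is to apply a second-order Taylor expansion of $C$ along the line segment joining $\bu$ and $\bv$, and to control the remainder by exploiting the bound on the second derivatives in (G\ref{cond:copula_smoothness}) together with the concavity of the map $x \mapsto x(1-x)$.

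First I would set $\phi(t) = C(\bu + t(\bv - \bu))$ for $t \in [0,1]$. For every $t \in [0,1)$, the point $\bu + t(\bv-\bu)$ lies in $(0,1)^2$ because $u_j \in [\gamma,1-\gamma]$ forces the $j$-th coordinate into $[(1-t)\gamma, 1-(1-t)\gamma]$. Hence (G\ref{cond:copula_smoothness}) ensures that $\phi$ is $C^2$ on $[0,1)$, while $\phi$ extends continuously to $[0,1]$ because any copula is continuous on $[0,1]^2$. Writing down Taylor's formula with integral remainder on $[0,s]$ for $s<1$ and letting $s \to 1^-$ (justified below by absolute integrability of the bound on $\phi''$), I get
\[
  C(\bv) - C(\bu) - \sum_{j=1}^2 (v_j - u_j)\, \dot C_j(\bu)
  = \int_0^1 (1-t)\, \phi''(t)\, \diff t,
\]
with $\phi''(t) = \sum_{j,k=1}^2 (v_j-u_j)(v_k-u_k)\, \ddot C_{jk}(\bu + t(\bv-\bu))$.

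Next I would bound the remainder pointwise. Writing $w_j(t) = (1-t)u_j + tv_j$, concavity of $x\mapsto x(1-x)$ yields
\[
  w_j(t)\bigl(1-w_j(t)\bigr) \ge (1-t)\, u_j(1-u_j) + t\, v_j(1-v_j)
  \ge (1-t)\, \gamma(1-\gamma) \ge \tfrac{1}{2}(1-t)\gamma,
\]
since $\gamma \le 1/2$. Combining this with (G\ref{cond:copula_smoothness}) gives
\[
  \abs{\ddot C_{jk}(\bu + t(\bv-\bu))}
  \le \kappa\, \bigl\{w_j(1-w_j)\, w_k(1-w_k)\bigr\}^{-1/2}
  \le \frac{2\kappa}{(1-t)\gamma}.
\]
The factor $(1-t)$ in the Taylor remainder exactly cancels the singularity at $t=1$, which justifies the passage to the limit above and gives
\[
  \Bigl| \int_0^1 (1-t)\, \phi''(t)\, \diff t \Bigr|
  \le \frac{2\kappa}{\gamma}\, \sum_{j,k=1}^2 |v_j-u_j|\, |v_k-u_k|
  = \frac{2\kappa}{\gamma}\Bigl(\sum_{j=1}^2 |v_j-u_j|\Bigr)^2.
\]

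Finally I would apply the elementary inequality $(\sum_{j=1}^2 a_j)^2 \le 2\sum_{j=1}^2 a_j^2$ to obtain the stated bound $\frac{4\kappa}{\gamma} \sum_{j=1}^2 (v_j - u_j)^2$. The only subtle step is justifying the Taylor formula when $\bv$ lies on the boundary of $[0,1]^2$; this is handled by the truncation-and-limit argument sketched above, and the integrability of the majorant $2\kappa/[(1-t)\gamma]\cdot(1-t)$ is what makes the whole scheme work.
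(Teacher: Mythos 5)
Your proof is correct and takes essentially the same route as the paper: parametrize the segment $\bw(t) = \bu + t(\bv-\bu)$, express the linearization error as an integral of second derivatives along the segment, and control $\ddot{C}_{jk}$ via the lower bound $w_j(t)\{1-w_j(t)\} \ge \tfrac12(1-t)\gamma$, with the $(1-t)$ factor in the remainder absorbing the singularity. The paper writes the remainder as a double integral $\int_0^1\int_0^1 \ddot C_{jk}(\bw(st))\,t\,\diff t\,\diff s$ via a nested application of the fundamental theorem of calculus rather than the standard $\int_0^1(1-t)\phi''(t)\,\diff t$, but a change of variables shows these coincide; likewise your one-line concavity argument along the segment replaces the paper's vertex-checking in $(u_j,v_j)$, but both yield the same bound.
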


In the lemma, the point $\bv$ is allowed to lie anywhere in $[0, 1]^2$, even on the boundary or at the corner. The `anchor point' $\bu$, however, must be at a distance at least $\gamma$ away from the boundary.

\begin{proof}
Fix $\gamma \in (0, 1/2)$, $\bu \in [\gamma, 1-\gamma]^2$ and $\bv \in [0, 1]^2$. For $t \in [0, 1]$, put 
\[ 
  \bw(t) = \bu + t ( \bv - \bu ). 
\]
Note that $\bw(t) \in (0, 1)^2$ for $t \in [0, 1)$. The function $t \mapsto C( \bw(t) )$ is continuous on $[0, 1]$ and is continuously differentiable on $(0, 1)$. By the fundamental theorem of calculus,
\begin{align*}
  C( \bv ) - C( \bu )
  = C( \bw(1) ) - C( \bw(0) ) 
  = \int_0^1 \frac{\diff C( \bw(t) )}{\diff t}  \, \diff t 
  = \int_0^1 \sum_{j=1}^2 (v_j - u_j) \, \dot{C}_j( \bw(t) ) \, \diff t.
\end{align*}
It follows that
\[
  C( \bv ) - C( \bu ) - \sum_{j=1}^2 (v_j - u_j) \, \dot{C}_j( \bu )
  = \sum_{j=1}^2 (v_j - u_j) \int_0^1 \{ \dot{C}_j( \bw(t) ) - \dot{C}_j( \bu ) \} \, \diff t.
\]
Fix $t \in [0, 1)$ and $j \in \{1, 2\}$. Note that
\[
  \bu + s ( \bw(t) - \bu ) = \bu + st ( \bv - \bu ) = \bw(st).
\]
The function $s \mapsto \dot{C}_j( \bw(st) )$ is continuous on $[0, 1]$ and continuously differentiable on $(0, 1)$. By the fundamental theorem of calculus,
\begin{align*}
  \dot{C}_j( \bw(t) ) - \dot{C}_j( \bu )
  = \int_0^1 \frac{\diff \dot{C}_j( \bw(st) )}{\diff s} \, \diff s
  = \sum_{k=1}^2 t(v_k - u_k) \int_0^1 \ddot{C}_{jk}( \bw(st) ) \, \diff s.
\end{align*}
We obtain
\begin{multline*}
  C( \bv ) - C( \bu ) - \sum_{j=1}^2 (v_j - u_j) \, \dot{C}_j( \bu )   = \sum_{j=1}^2 \sum_{k=1}^2 (v_j - u_j) (v_k - u_k) \int_0^1 \int_0^1 \ddot{C}_{jk}( \bw(st) ) \, t \, \diff t \, \diff s.
\end{multline*}
Suppose we find a positive constant $L$ such that, for all $\bu \in [\gamma, 1-\gamma]^2$, $\bv \in [0, 1]^2$ and $j, k \in \{1, 2\}$,
\begin{equation}
\label{eq:iintddot}
  \abs{ \int_0^1 \int_0^1 \ddot{C}_{jk}( \bw(st) ) \, t \, \diff t \, \diff s } \le L.
\end{equation}
Then
\begin{align*}
  \abs{ C( \bv ) - C( \bu ) - \sum_{j=1}^2 (v_j - u_j) \, \dot{C}_j( \bu ) }
  &\le L \sum_{j=1}^2 \sum_{k=1}^2 \abs{v_j - u_j} \abs{v_k - u_k} \\
  &= L \, \bigl( \abs{v_1 - u_1} + \abs{v_2 - u_2} \bigr)^2 \\
  &\le 2L \, \bigl( (v_1 - u_1)^2 + (v_2 - u_2)^2 \bigr),
\end{align*}
which is the inequality stated in the lemma with $2L$ in place of $4\kappa/\gamma$. It remains to show \eqref{eq:iintddot}. By Condition~(G\ref{cond:copula_smoothness}),
\[
  \abs{ \ddot{C}_{jk}( \bw(st) ) }
  \le \kappa \, \{ w_j \, (1 - w_j) \, w_k \, (1-w_k) \}^{-1/2},
\]
where $w_j = w_j(st)$ is the $j$-th coordinate of $w(st)$. 
Now
\begin{align*}
  w_j (1-w_j)
  &= \{ u_j + st( v_j - u_j ) \} \, \{ 1 - u_j - st (v_j - u_j ) \}.
\end{align*}
For fixed $st \in [0, 1)$, this expression is concave as a function of $(u_j, v_j) \in [\gamma, 1-\gamma] \times [0, 1]$. Hence it attains its minimum for $(u_j, v_j) \in \{ \gamma, 1-\gamma \} \times \{0, 1\}$. In each of the four possible cases, we find
\[
  w_j (1-w_j) \ge \frac{1}{2} (1-st) \gamma.
\]
We obtain
\[
  \abs{ \ddot{C}_{jk}( \bw(st) ) }
  \le \frac{2\kappa}{(1-st) \gamma}.
\]
As a consequence,
\[
  \int_0^1 \int_0^1 \abs{ \ddot{C}_{jk}( \bw(st) ) } \, t \, \diff t \, \diff s
  \le \frac{2\kappa}{\gamma} \int_0^1 \int_0^1 \frac{t}{1 - st} \, \diff t \, \diff s
  = \frac{2\kappa}{\gamma},
\]
by direct calculation of the double integral.
\end{proof}

We quote Lemma~4.3 in \cite{segers:15}, which is a variant of ``Vervaat's lemma'', i.e., the functional delta method for the mapping sending a monotone function to its inverse:

\begin{lemma}
\label{lem:Vervaat:random}
Let $F : \R \to [0, 1]$ be a continuous distribution function. Let $0 < r_n \to \infty$ and let $\hat{F}_n$ be a sequence of random distribution functions such that, in $\ell^\infty(\R)$,
\begin{equation*}
  r_n ( \hat{F}_n - F ) \dto \beta \circ F, \qquad n \to \infty,
\end{equation*}
where $\beta$ is a random element of $\ell^\infty([0, 1])$ with continuous trajectories. Then $\beta(0) = \beta(1) = 0$ almost surely and
\begin{equation*}
  \sup_{u \in [0, 1]} \abs{ r_n \{ F(\hat{F}_n\inv(u)) - u \} + r_n \{ \hat{F}_n(F\inv(u)) - u \} } = o_{\mathbb P}(1).
\end{equation*}
As a consequence, in $\ell^\infty([0, 1])$,
\begin{equation*}
  \bigl( r_n \{ F( \hat{F}_n\inv(u) ) - u \} \bigr)_{u \in [0, 1]}
  \dto - \beta, \qquad n \to \infty.
\end{equation*}
\end{lemma}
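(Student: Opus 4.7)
My plan is to combine the Skorokhod--Dudley almost-sure representation with the classical Vervaat sandwich. Set $\alpha_n = r_n(\hat F_n - F)$, so the hypothesis gives $\alpha_n \dto \beta \circ F$ in $\ell^\infty(\R)$; after moving to a common probability space I may assume $\|\alpha_n - \beta\circ F\|_\infty \to 0$ almost surely. Since $F$ is continuous with limits $0$ and $1$ at $\pm\infty$ and $\beta$ is continuous on $[0,1]$, $\beta \circ F$ extends continuously to the compactification $[-\infty,+\infty]$ and is thus uniformly continuous on $\R$. For the endpoint identities, fix $\epsilon > 0$ and $n$ with $\|\alpha_n - \beta\circ F\|_\infty < \epsilon$; then $|\beta(F(y))| < \epsilon + |\alpha_n(y)|$, and since $\alpha_n(y) \to 0$ as $y \to -\infty$ (both $\hat F_n$ and $F$ vanish there) while $\beta(F(y)) \to \beta(0)$ by continuity, one concludes $\beta(0) = 0$; symmetrically $\beta(1) = 0$.

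Next, apply the Vervaat sandwich $\hat F_n(\hat F_n\inv(u)-) \le u \le \hat F_n(\hat F_n\inv(u))$. Writing $\xi_n(u) = \hat F_n\inv(u)$,
\[
  r_n\{F(\xi_n(u)) - u\}
  = -\alpha_n(\xi_n(u)) + r_n\{\hat F_n(\xi_n(u)) - u\},
\]
where the residual lies in $[0, \alpha_n(\xi_n(u)) - \alpha_n(\xi_n(u)-)]$ by continuity of $F$, hence is bounded by $\sup_y|\alpha_n(y) - \alpha_n(y-)| \le 2\|\alpha_n - \beta \circ F\|_\infty$, which is $o_{\mathbb P}(1)$ uniformly in $u$. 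On the other side, continuity of $F$ gives $r_n\{\hat F_n(F\inv(u)) - u\} = \alpha_n(F\inv(u))$ for $u \in (0,1)$, with both processes vanishing at $u \in \{0,1\}$ by the endpoint identities above.

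It remains to match $\alpha_n$ at $\xi_n(u)$ and at $F\inv(u)$. By uniform continuity of $\beta \circ F$,
\[
  \abs{\alpha_n(\xi_n(u)) - \alpha_n(F\inv(u))}
  \le 2\|\alpha_n - \beta\circ F\|_\infty + \abs{\beta(F(\xi_n(u))) - \beta(u)}.
\]
The previous decomposition gives $F(\xi_n(u)) - u = r_n^{-1}\{-\alpha_n(\xi_n(u)) + o_{\mathbb P}(1)\} = O_{\mathbb P}(r_n^{-1})$ uniformly in $u$, so uniform continuity of $\beta$ on $[0,1]$ makes the second term $o_{\mathbb P}(1)$ uniformly. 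Combining yields the uniform equivalence of $r_n\{F \circ \hat F_n\inv - \mathrm{id}\}$ and $-r_n\{\hat F_n \circ F\inv - \mathrm{id}\}$; the distributional consequence then follows by Slutsky together with the continuous mapping $\alpha \mapsto \alpha \circ F\inv$. The main technical obstacle is the uniformity near $u \in \{0,1\}$, where $F\inv(u)$ tends to $\pm\infty$ and $\xi_n(u)$ may drift with it: the convergence hypothesis in the full $\ell^\infty(\R)$ norm is what supplies both the endpoint vanishing $\beta(0) = \beta(1) = 0$ and the uniform continuity of $\beta\circ F$ on all of $\R$, and without these features the argument would only deliver equivalence on compact sub-intervals of $(0,1)$.
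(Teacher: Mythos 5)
The paper itself does not prove Lemma~\ref{lem:Vervaat:random}; it simply quotes it as Lemma~4.3 of Segers (2015). Against that benchmark, your argument is correct and follows essentially the same route as the cited source: pass to an almost-sure Skorokhod--Dudley representation (the version for tight limits in non-separable $\ell^\infty$ spaces, van der Vaart and Wellner~1996, Theorem~1.10.4, which applies because $\beta\circ F$ has its range in the separable subspace of functions on $\overline{\R}$ that are continuous with limits at $\pm\infty$), read off $\beta(0)=\beta(1)=0$ from the forced decay of $\alpha_n=r_n(\hat F_n-F)$ at $\pm\infty$, use the Vervaat sandwich $\hat F_n(\hat F_n^-(u)-)\le u\le\hat F_n(\hat F_n^-(u))$ together with continuity of $F$ to absorb the jump term into $2\|\alpha_n-\beta\circ F\|_\infty$, and finally match $\alpha_n(\hat F_n^-(u))$ with $\alpha_n(F^-(u))$ through $F(\hat F_n^-(u))-u=O(r_n^{-1})$ and uniform continuity of $\beta$. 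Two small points worth making explicit if you wrote this up: the residual $r_n\{\hat F_n(\hat F_n^-(u))-u\}$ is exactly $0$ at $u\in\{0,1\}$ (so no additional endpoint analysis is actually needed there), and the final ``continuous mapping $\alpha\mapsto\alpha\circ F^-$'' is not a continuous map into $\ell^\infty([0,1])$ without specifying the endpoint values, but your earlier almost-sure computation $\sup_{u\in(0,1)}|\alpha_n(F^-(u))-\beta(u)|\le\|\alpha_n-\beta\circ F\|_\infty\to 0$ already delivers the conclusion directly, so the invocation of the continuous mapping theorem is unnecessary.
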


\begin{lemma}
\label{lemma:FoF^-}
Let $F$ be a continuous function such that $\lim_{y\rightarrow -\infty} F(y) =0$ and $\lim_{y\rightarrow +\infty} F(y) =1$, then for any $u\in (0,1)$, we have $F(F^-(u)) = u$.
\end{lemma}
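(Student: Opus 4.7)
The plan is to work directly from the definition $F^-(u) = \inf\{y \in \R : F(y) \ge u\}$ and show that continuity plus the two limit conditions force the infimum to be attained with value exactly $u$. Monotonicity of $F$ is in fact not needed; only continuity and the correct limits at $\pm\infty$.

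Fix $u \in (0,1)$ and set $A = \{y \in \R : F(y) \ge u\}$ and $y^* = \inf A$. The first step is to check that $y^*$ is a finite real number. Since $\lim_{y \to +\infty} F(y) = 1 > u$, there exists $y_0$ with $F(y_0) \ge u$, so $A \ne \emptyset$. Since $\lim_{y \to -\infty} F(y) = 0 < u$, there exists $y_1$ such that $F(y) < u$ for all $y \le y_1$, so $A$ is bounded below by $y_1$. Consequently $y^* \in \R$.

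The second step is to show $F(y^*) \ge u$. By definition of infimum, there exists a sequence $(y_n) \subset A$ with $y_n \downarrow y^*$. Since $F$ is continuous, $F(y_n) \to F(y^*)$, and since each $F(y_n) \ge u$, passing to the limit gives $F(y^*) \ge u$. The third step is to show the reverse inequality $F(y^*) \le u$. For every $n \ge 1$, we have $y^* - 1/n < y^* = \inf A$, hence $y^* - 1/n \notin A$, which means $F(y^* - 1/n) < u$. Continuity of $F$ at $y^*$ yields $F(y^*) = \lim_{n \to \infty} F(y^* - 1/n) \le u$.

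Combining the two inequalities gives $F(F^-(u)) = F(y^*) = u$, which is the claim. There is really no serious obstacle here; the only subtle point is noticing that the one-sided approximation from the left (step three) uses nothing beyond the fact that points strictly below an infimum lie outside the set, so monotonicity of $F$ is not required and the lemma goes through under the stated hypotheses alone.
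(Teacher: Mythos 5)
Your proof is correct and takes essentially the same approach as the paper: establish that $y^* = F^-(u)$ is finite, obtain $F(y^*) \le u$ from $F(y) < u$ for $y < y^*$ together with continuity, and obtain $F(y^*) \ge u$ from continuity (equivalently, closedness of the level set $\{y : F(y) \ge u\}$). Your third step spells out the "$\ge$" inequality via a sequence argument where the paper simply asserts it; otherwise the two arguments are the same, and your remark that monotonicity is not needed matches the paper's hypotheses.
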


\begin{proof}
Let $y_0=F^-(u)$. By assumption, the set $\{y \in \reals \;:\; F(y) \geq u\} $ is non empty and therefore $-\infty <y_0<+\infty $. By definition of the quantile transformation, for any $y<y_0$, it holds $F(y)<u$. Now using the continuity of $F$  gives $F(y_0)\leq u$. Conclude by noting that we always have $F(y_0)\geq u$.
\end{proof}

\begin{lemma}
\label{lemma:bracketingquantile}
Let $F$ and $G$ be cumulative distribution functions. If there exists $\eps > 0$ such that $\abs{ F(y) - G(y) } \le \epsilon$ for every $y\in \R$, then
\begin{align}
\label{eq:bracketingquantile:1}
  G^-( (u-\epsilon) \vee 0) &\leq F^-(u), &&u \in [0, 1], \\
\label{eq:bracketingquantile:2}
  F^-(u) &\leq G^-(u+\eps), &&u \in [0, 1-\eps].
\end{align}
\end{lemma}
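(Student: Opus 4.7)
The proof of this lemma is a straightforward consequence of the following characterization of the generalized inverse for right-continuous functions: for any distribution function $F$ on $\R$ and any $(u, y) \in [0, 1] \times \R$, one has $F^-(u) \le y$ if and only if $F(y) \ge u$. The ``$\Leftarrow$'' direction is immediate from the defining infimum in \eqref{eq:genInv}, and the ``$\Rightarrow$'' direction uses the right-continuity of $F$ to pass to the limit through a sequence $y_n \downarrow F^-(u)$ with $F(y_n) \ge u$. I would state this equivalence as a preliminary observation, applied to both $F$ and $G$.

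For the second inequality \eqref{eq:bracketingquantile:2}, fix $u \in [0, 1-\eps]$ and set $y = G^-(u+\eps)$. Applying the equivalence above to $G$ gives $G(y) \ge u+\eps$. Then the hypothesis $|F - G| \le \eps$ yields $F(y) \ge G(y) - \eps \ge u$, and by the equivalence applied to $F$, this means $F^-(u) \le y = G^-(u+\eps)$, as claimed.

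For the first inequality \eqref{eq:bracketingquantile:1}, the argument is symmetric but requires a small case split. If $u \ge \eps$, apply the second inequality with the roles of $F$ and $G$ swapped and with $u$ replaced by $u - \eps \in [0, 1-\eps]$: this yields $G^-(u - \eps) \le F^-(u)$. If $u < \eps$, then $(u - \eps) \vee 0 = 0$, and $G^-(0) = \inf \R = -\infty$ under the convention in \eqref{eq:genInv}, so the inequality is trivial.

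There is no real obstacle here; the only subtlety is the boundary case $u < \eps$ in \eqref{eq:bracketingquantile:1}, which is handled by the truncation $(u-\eps) \vee 0$ in the statement and the convention that the infimum over an empty set is $+\infty$ while $G^-(0) = -\infty$.
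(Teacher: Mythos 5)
Your proof is correct and follows essentially the same approach as the paper's: both rest on the right-continuity fact $F(F^-(u)) \ge u$ (equivalently, the characterization $F^-(u) \le y \Leftrightarrow F(y) \ge u$), establish one of the two inequalities directly, and deduce the other by swapping $F$ and $G$ and shifting $u$ by $\eps$. You prove \eqref{eq:bracketingquantile:2} first and derive \eqref{eq:bracketingquantile:1} from it, whereas the paper does the reverse; that ordering is immaterial.
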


\begin{proof}
We first show \eqref{eq:bracketingquantile:1}. If $F^-(u) = \infty$, there is nothing to show, while if $F^-(u) = -\infty$, then $u \le F(F^-(u)) = F(-\infty) = 0$, so $G^-((u-\eps) \vee 0) = G^-(0) = -\infty$ too. Hence we can suppose that $F^-(u)$ is finite. Since $G(y) \ge F(y) - \epsilon$ for all $y \in \reals$, we have $G(F^-(u)) \ge F(F^-(u)) - \epsilon \ge u - \epsilon$. Trivially, also $G(F^-(u)) \ge 0$. Together, we find $G(F^-(u)) \ge (u - \epsilon) \vee 0$. As a consequence, $F^-(u) \ge G^-((u - \epsilon) \vee 0)$.

Next we show \eqref{eq:bracketingquantile:2}. Let $u \in [0, 1-\eps]$. By \eqref{eq:bracketingquantile:1} with the roles of $F$ and $G$ interchanged and applied to $u+\eps$ rather than to $u$, we find $F^-(u) = F^-(((u+\eps) - \eps) \vee 0) \le G^-(u+\eps)$.
\end{proof}

\end{appendices}

\section*{Acknowledgments}

The research by F. Portier was supported by the Fonds de la Recherche Scientifique Belgium (F.R.S.-FNRS) A4/5 FC 2779/2014-2017 No.\ 22342320, and the research by J. Segers was supported by the contract ``Projet d'Act\-ions de Re\-cher\-che Concert\'ees'' No.\ 12/17-045 of the ``Communaut\'e fran\c{c}aise de Belgique'' and by IAP research network Grant P7/06 of the Belgian government (Belgian Science Policy).

The authors greatfully acknowledge valuable comments by the reviewers and the editors, in particular the helpful suggestions related to the structure of the theory and the presentation of the paper.


\setlength{\bibsep}{0.3ex plus 0.3ex}

\bibliographystyle{chicago}

\end{document}